\definecolor{myblue}{RGB}{0,29,119}
\newtheorem{theorem}{Theorem}[section]
\newtheorem{proposition}[theorem]{Proposition}
\newtheorem{corollary}[theorem]{Corollary}
\newtheorem{lemma}[theorem]{Lemma}
\theoremstyle{definition}
\newtheorem{definition}[theorem]{Definition}
\newtheorem{example}[theorem]{Example}
\newtheorem{remark}[theorem]{Remark}
\newtheorem{remarks}[theorem]{Remarks}
\newtheorem{claim}[theorem]{Claim}
\newtheorem*{theorem*}{Theorem}
\newcommand{\Aa}{{\mathbb A}} 
\DeclareMathOperator{\End}{End}
\DeclareMathOperator{\coker}{coker}
\DeclareMathOperator{\im}{Im}
\DeclareMathOperator{\findim}{fin.\! dim}
\DeclareMathOperator{\gordim}{gor.\! dim}
\DeclareMathOperator{\gldim}{gl\,\! dim}
\DeclareMathOperator{\domdim}{dom.\! dim}
\DeclareMathOperator{\indim}{in.\!dim}
\DeclareMathOperator{\Hom}{Hom}
\DeclareMathOperator{\ext}{Ext}
\DeclareMathOperator{\pdim}{p.\!dim}
\DeclareMathOperator{\rank}{Rank}
\DeclareMathOperator{\add}{add}
\DeclareMathOperator{\soc}{soc}
\DeclareMathOperator{\topp}{top}
\DeclareMathOperator{\modd}{mod-\!}
\DeclareMathOperator{\rad}{rad}
\newcommand{\cB}{{\mathcal B}}
\newcommand{\cE}{{\mathcal E}}
\newcommand{\cF}{{\mathcal F}}
\newcommand{\cP}{{\mathcal P}}
\newcommand{\cS}{{\mathcal S}}
\newcommand{\cT}{{\mathcal T}}
\newcommand{\cW}{{\mathcal W}}
\renewcommand{\rank}{\operatorname{rank} }
\newcommand{\ho}[1]{\Hom_{\Lambda}(\cP,{#1})}
\newcommand{\syz}{\bm\varepsilon(\Lambda)}
\providecommand{\AMS}{$\mathcal{A}$\kern-.1667em%
\lower.25em\hbox{$\mathcal{M}$}\kern-.125em$\mathcal{S}$}
\newenvironment{customthm}[1]
  {\innercustomthm}{\endinnercustomthm}
\begin{document}

\title[Syzygy Filtrations of Cyclic Nakayama Algebras]{Syzygy Filtrations of Cyclic Nakayama Algebras}

\author{Emre SEN}

\email{\href{emresen641@gmail.com}{emresen641@gmail.com}}

\maketitle

{\let\thefootnote\relax\footnotetext{Keywords: Nakayama algebras; wide subcategories; $\varphi$, finitistic, Gorenstein, dominant and global dimensions and their upper bounds, 
MSC 2020: 16E05, 16E10, 16G20}}



\begin{center}
{\small{\it To Professors Kiyoshi Igusa and Gordana Todorov in celebration of seventieth birthdays  }}
\end{center}

\begin{abstract}
For any cyclic Nakayama algebra $\Lambda$, we construct  \emph{syzygy filtered algebra} $\bm\varepsilon(\Lambda)$ which corresponds to various syzygy modules as the name suggests. We prove that the category of modules over the syzygy filtered algebra $\bm\varepsilon(\Lambda)$ is equivalent to the wide subcategory cogenerated by projective-injective modules of the original algebra $\Lambda$ along with other categorical equivalences.

In terms of this new algebra, we interpret the following homological invariants of $\Lambda$: left and right finitistic dimension, left and right $\varphi$-dimension, Gorenstein dimension, dominant dimension and their upper bounds. For all of them, we obtain a unified upper bound $2r$ where $r$ is the number of relations defining the algebra $\Lambda$. We show that  the left finitistic and $\varphi$-dimensions are equal to the right finitistic and $\varphi$-dimensions respectively, as well as the difference between $\varphi$-dimension and the finitistic dimension is at most one. Furthermore, we recover various seemingly unrelated results in a uniform way.

\end{abstract}

\tableofcontents

\section{Introduction}
For an artin algebra $A$ over the field $\mathbb{K}$, a major open question is the finitistic dimension conjecture. It states that supremum of finite projective dimensions of $A$-modules is finite. If the representation dimension of $A$ is less than or equal to three then the conjecture holds, the proof was given by K. Igusa and G. Todorov in \cite{todo}. In the proof, they introduced a function, which we denote by $\varphi$, for each module $M$ and its value is always a finite number (see def. \ref{defvarphimodule} ). Therefore we can take supremum of those values and define $\varphi$-dimension of $A$ as
\begin{align*}
\varphi\dim A:=\sup\left\{\varphi(M)\vert\, M\,\text{is an A-module} \right\}.
\end{align*} 
It turns out that this new homological tool has applications. For instance, an algebra is selfinjective if and only if $\varphi$-dimension of algebra is zero \cite{lanz}. If an algebra is Gorenstein then Gorenstein dimension and $\varphi$-dimension are same \cite{ralf}. Possible numerical values of $\varphi$-dimension can be a useful homological measure for artin algebras.

\par An artin algebra is called Nakayama algebra if each left or right indecomposable projective module has a unique composition series. Nakayama algebras are either linear or cyclic depending on the underlying quiver. 
 In \cite{sen2018varphi} we showed that for a cyclic Nakayama algebra of infinite global dimension, $\varphi$-dimension is always an even number. In the same work, in order to study $\varphi$-dimension, we considered modules having a particular filtration which is called $\Delta$-filtration. In this work, we develop the syzygy filtration method which is based on the $\Delta$-filtrations introduced at \cite{sen2018varphi}.  In short, the core idea of the syzygy filtration method is constructing the algebra $\bm\varepsilon(\Lambda)$ called \emph{syzygy filtered algebra} whose modules are equivalent to the modules filtered by
the second syzygies of the original algebra $\Lambda$. Definition and features of the method are stated in the first section. We put together all the results regarding the syzygy filtered algebra $\bm{\varepsilon}(\Lambda)$ below where $\findim\Lambda$, $\gldim\Lambda$, $\domdim\Lambda$, $\gordim\Lambda$ stands for finitistic dimension, global dimension, dominant dimension, Gorenstein dimension of $\Lambda$ respectively.

\begin{customthm}{A}\label{bigthm1} Let $\Lambda$ be a cyclic Nakayama algebra. Then,
\begin{enumerate}[label=\roman*)]
\item\label{thmselfinjective} $\Lambda$ is selfinjective if and only if $\Lambda\cong\bm{\varepsilon}(\Lambda)$.
\item\label{thmfidimreduction} $\varphi\dim\Lambda=\varphi\dim\bm{\varepsilon}(\Lambda)+2$, provided that $\varphi\dim\Lambda\geq 2$ and $\gldim\Lambda=\infty$.
\item\label{thmfindimreduction} $\findim\Lambda=\findim\bm\varepsilon(\Lambda)+2$, provided that $\findim\Lambda\geq 2$ and $\gldim\Lambda=\infty$.
\item\label{thmgldimreduction} $\gldim\Lambda=\gldim\bm\varepsilon(\Lambda)+2$ provided that $\gldim\Lambda\geq 2$ and $\gldim\Lambda$ is finite.
\item\label{thmgordireduction} If $\Lambda$ is Gorenstein then $\bm{\varepsilon}(\Lambda)$ is Gorenstein. Moreover, $\gordim\Lambda=\gordim\bm\varepsilon(\Lambda)+2$ provided that $\gordim\Lambda\geq 2$.
\item\label{thmdomdimreduction} $\domdim\Lambda=\domdim\bm\varepsilon(\Lambda)+2$ provided that $\domdim\Lambda\geq 3$.
\item\label{thmleftrightfindim} Left and right finitistic dimensions are same i.e. $\findim\Lambda=\findim\Lambda^{op}$.
\item\label{thmleftrightfidim} Left and right $\varphi$-dimensions are same i.e. $\varphi\dim\Lambda=\varphi\dim\Lambda^{op}$.
\item\label{thmdifferencefiandfin} Difference between $\varphi$-dimension and the finitistic dimension of $\Lambda$ can be at most one i.e.  $\varphi\dim\Lambda-\findim\Lambda\leq 1$.
\end{enumerate}
\end{customthm}

By using the result \ref{thmgordireduction}, we give an elementary  proof of the equality of $\varphi$-dimension and Gorenstein dimension (\cite{ralf}, \cite{lanz}) for Nakayama algebras.

Indeed, all homological dimensions we discussed in Theorem \ref{bigthm1} share the same unified upper bound.
\begin{customthm}{B}
If $\Lambda$ is a cyclic non-selfinjective Nakayama algebra which is defined by $r$-many irredundant relations, then $\varphi\dim\Lambda$, $\findim\Lambda$, $\gordim\Lambda$, $\domdim\Lambda$ are bounded by $2r$.
\end{customthm}

The reductions \ref{thmfidimreduction}, \ref{thmfindimreduction}, \ref{thmgldimreduction}, \ref{thmgordireduction} and \ref{thmdomdimreduction} in Theorem \ref{bigthm1} enable us to use mathematical induction on the homological dimensions, because the syzygy filtered algebra $\bm\varepsilon(\Lambda)$ is also a Nakayama algebra (see \ref{PROP filtered algebra is Nakayama}). Therefore we can apply the syzygy filtration method to $\bm\varepsilon(\Lambda)$ provided that it is cyclic and get higher syzygy filtered algebras $\bm\varepsilon^k(\Lambda)$ (see \ref{defhigherfilteredalgebra}). The result concerning the higher syzygy filtered algebras is stated below.

\begin{customthm}{C}\label{bigthm2}\kern-\parskip\begin{enumerate}[label=\roman*)]
\item\label{thmlambdastable}(Thm \ref{THM fidim becomes stable }) If $\Lambda$ is a cyclic non-selfinjective Nakayama algebra of infinite global dimension, then there exists a non-negative integer $k$ such that $\bm\varepsilon^k(\Lambda)$ is not selfinjective and $\bm\varepsilon^{k+1}(\Lambda)$ is selfinjective Nakayama algebra.
\item\label{thmsplit}(Thm \ref{thm en son splitting}) If $\Lambda$ is a cyclic connected Nakayama algebra of finite global dimension, then there exists a non-negative $k$ such that $\bm\varepsilon^k(\Lambda)$  is a cyclic connected Nakayama algebra and $\bm\varepsilon^{k+1}(\Lambda)$ is not cyclic\footnote{We set $\bm\varepsilon^0(\Lambda)=\Lambda$}. The algebra $\bm\varepsilon^{k+1}(\Lambda)$ can split into components that are either linear Nakayama algebras or semisimple components or both.
\kern-\parskip\end{enumerate}\kern-\parskip
\end{customthm}
As an application of Theorem \ref{bigthm2} part \ref{thmsplit}, we give a refinement of the main result of \cite{madsen2018bounds}.

\begin{customthm}{D}(Thm \ref{RES upper bound gldim}) If global dimension of cyclic Nakayama algebra $\Lambda$ is finite, then it is bounded by $2m+r_{m-1}-C$ where $\bm\varepsilon^m(\Lambda)$ is linear Nakayama algebra and $r_{m-1}$ is the minimal number of relations which define cyclic Nakayama algebra $\bm\varepsilon^{m-1}(\Lambda)$ and $C$ is the number of connected components of $\bm\varepsilon^m(\Lambda)$.
\end{customthm}

In section \ref{section1}, we define the algebra $\bm{\varepsilon}(\Lambda)$ and study its properties and show some categorical equivalences between certain module categories (see \ref{subsec filt}). Each of the statements of Theorem \ref{bigthm1} is proved in section \ref{section2} separately. We give the proof and applications of Theorem \ref{bigthm2} in  section \ref{section3}. Also, in the subsection \ref{future} we list some results and open questions relying on the relationship between some syzygy modules and wide subcategories cogenerated by projective-injective modules (see \ref{PROP filtered module category is equivalent to wide subcategory}) which we hope to extend results of this paper into broader context.

\subsection{Acknowledgments} We are deeply thankful to Claus-Michael Ringel and Shijie Zhu for their help to the improvement of some ideas  as well as the anonymous referee for the detailed critics and valuable suggestions.

\section{Syzygy Filtered Algebras }\label{section1}
First, we want to set up the scene and the notation.\\
A module is called uniserial if its left and right composition series are unique. A bound quiver algebra is called Nakayama algebra if every indecomposable module is uniserial. There are two types of Nakayama algebras depending on the underlying quiver: either cyclic or linear.
\par Throughout the paper $\Lambda$ is a cyclic Nakayama algebra with the underlying quiver $Q$ (see the figure \ref{quiverQ}) over the field $\mathbb{K}$. $S_i$ is the simple module at vertex $i$, $P_i=P(S_i)$ is the projective cover of $S_i$, $I_i=I(S_i)$ is the injective envelope of $S_i$. We use the term \emph{rank} for the number of nonisomorphic simple modules. The socle and the top of an indecomposable $\Lambda$-module $M$ are the simple submodule of $M$ and the simple quotient of $M$ respectively. We denote them by $\soc M$ and $\topp M$. The length of an indecomposable uniserial module $M$ is the number of simple composition factors of $M$ and denoted by $\ell(M)$. The radical of an indecomposable module $M$, denoted by $\rad M$, is the longest submodule of $M$ i.e. $\faktor{M}{\rad M}\cong \topp M$ and $\ell(M)=\ell(\rad M)+1$. If $M$ is a (proper) submodule of $N$, we denote it by $M\subseteq N$ $(M\subset N)$.  We recall that a sequence of positive integers $\left(c_1,c_2,\ldots,c_N\right)$ satisfying $c_i\leq 1+c_{i+1}$ for $1\leq i\leq N-1$ and $c_N\leq c_1+1$ is called Kupisch series where each $c_i$ is the length of projective module $P_i=P(S_i)$. Any Nakayama algebra can be described by Kupisch series upto cyclic permutation.

\begin{figure}[t]\centering
\begin{tikzpicture}
\foreach \ang\lab\anch in {90/1/north, 45/2/{north east}, 0/3/east, 270/i/south, 180/{N-1}/west, 135/N/{north west}}{
  \draw[fill=black] ($(0,0)+(\ang:3)$) circle (.08);
  \node[anchor=\anch] at ($(0,0)+(\ang:2.8)$) {$\lab$};
}

\foreach \ang\lab in {90/1,45/2,180/{N-1},135/N}{
  \draw[->,shorten <=7pt, shorten >=7pt] ($(0,0)+(\ang:3)$) arc (\ang:\ang-45:3);
  \node at ($(0,0)+(\ang-22.5:3.5)$) {$\boldsymbol\alpha_{\lab}$};
}

\draw[->,shorten <=7pt] ($(0,0)+(0:3)$) arc (360:325:3);
\draw[->,shorten >=7pt] ($(0,0)+(305:3)$) arc (305:270:3);
\draw[->,shorten <=7pt] ($(0,0)+(270:3)$) arc (270:235:3);
\draw[->,shorten >=7pt] ($(0,0)+(215:3)$) arc (215:180:3);
\node at ($(0,0)+(0-20:3.5)$) {$\boldsymbol\alpha_3$};
\node at ($(0,0)+(315-25:3.5)$) {$\boldsymbol\alpha_{i-1}$};
\node at ($(0,0)+(270-20:3.5)$) {$\boldsymbol\alpha_i$};
\node at ($(0,0)+(225-25:3.5)$) {$\boldsymbol\alpha_{N-2}$};

\foreach \ang in {310,315,320,220,225,230}{
 \draw[fill=black] ($(0,0)+(\ang:3)$) circle (.02);
}
\end{tikzpicture}
\caption{Quiver Q}\label{quiverQ}
\end{figure}
\begin{definition}\label{defconsecutivemodules}
Two simple modules $S_i$, $S_j$ are called \emph{consecutive} if $j=i+1$ for $1\leq i\leq N-1$ or $S_i=S_N$ and $S_j=1$ when the rank of $\Lambda$ is $N$. Equivalently, $S_i,S_j$ are called consecutive if one of them is Auslander-Reiten translate of the other, i.e. $\tau S_i\cong S_j$.
\end{definition}

\begin{remark} Let $\rank\Lambda=N$. We order simple modules such that $\ext^1_{\Lambda}(S_i,S_j)$ is nontrivial if and only if $\tau S_i\cong S_j$. In other words, the set of the complete list of representatives of $\Lambda$-modules of length two is 
\begin{align*}
\left\{\begin{vmatrix}
S_1\\S_2
\end{vmatrix},\begin{vmatrix}
S_2\\S_3
\end{vmatrix},\ldots,\begin{vmatrix}
S_{N-1}\\S_N
\end{vmatrix},\begin{vmatrix}
S_{N}\\S_1
\end{vmatrix}\right\}.
\end{align*}

\end{remark}

Here we collect some results about uniserial modules.

\begin{lemma}\label{uniserialitylemma}\cite[lemma 2.5.7]{sen2018varphi} If two indecomposable $\Lambda$-modules $M,N$ have the isomorphic socle, then either $M$ is a submodule of $N$ or $N$ is a submodule of $M$. 
\end{lemma}
\begin{corollary}\label{uniserialitylemma injectiveCOR} If two indecomposable non-isomorphic $\Lambda$-modules $M,N$ have the isomorphic top, then either $M$ is quotient of $N$ or $N$ is quotient of $M$.
\end{corollary}

We refer to lemma \ref{uniserialitylemma} as \emph{the uniseriality lemma}.

\begin{lemma}\label{topsocConsecutive} \cite[lemma 2.5.11]{sen2018varphi} If $A$, $B$, $C$ are indecomposable uniserial modules and 
\begin{align*}
0\rightarrow A\rightarrow B\rightarrow C\rightarrow  0
\end{align*}
is nonsplit exact sequence, then
the top of $A$ and the socle of $C$ are consecutive modules, i.e. $\tau\soc C\cong\topp A$.
\end{lemma}

\begin{lemma}\label{LEM projectivecannotbequotientlemma} \cite[lemma 2.5.9.]{sen2018varphi}  A projective module $P$ cannot be a proper subquotient of another
indecomposable module $X$. Moreover, a projective module cannot be a submodule of a non-projective indecomposable module. An indecomposable projective-injective module cannot be proper submodule of another projective-injective.
\end{lemma}

\subsection{Nakayama algebras via relations}
Before introducing the system of relations independently, first we want to discuss its relationship with Kupisch series which is more common tool for the community. The direct approach was given in \cite{sen2018varphi}, but it brings its own burden, which is the cyclic ordering of particular simple and projective modules. To keep all the relevant data, intricate usage of indices is necessary and it is not easy to digest at first.  To avoid these complications, it is convenient for us to interpret Kupisch series in terms of the structures of the modules. To perform this, the first step we take is the notion of minimal projective module.

\begin{definition}\label{defminimalprojective gercek tanim}
 An indecomposable projective $\Lambda$-module is called \emph{minimal} if its radical is not projective. Dually, an indecomposable injective $\Lambda$-module is called minimal if its quotient is not injective.
\end{definition}

\begin{proposition}\label{PROP P is minimal iff ci<ci+1} Let $\Lambda$ be given  by Kupisch series $(c_1,\ldots,c_N)$. The projective module $P_i$ is minimal if and only if $c_i\leq c_{i+1}$ for $1\leq i\leq N-1$ and $c_N\leq c_1$ for $i=N$.
\end{proposition}
\begin{proof}
We work with the contrapositive of the statement.  \\ $(\Rightarrow)$. Assume that $P_i$ is not minimal projective, hence $\rad P_i\cong P_{i+1}$ is a projective module. If $\ell(P_i)=c_i$, then
\begin{align*}
c_{i+1}=\ell(P_{i+1})=\ell(\rad P_i)=c_i-1
\end{align*}
implies that $c_i>c_{i+1}$.\\
$(\Leftarrow)$. Assume that $c_i>c_{i+1}$. By the condition $c_i\leq c_{i+1}+1$, we get $c_i=c_{i+1}+1$. Simple modules appearing in the composition series of projective modules are consecutive, therefore we can express the socles of $P_i$ and $P_{i+1}$ by their top modules, i.e.
\begin{align*}
\begin{gathered}
\soc P_i\cong \tau^{c_i-1}\topp P_i\cong \tau^{c_i-1}S_i.\\
\soc P_{i+1}\cong \tau^{c_{i+1}-1}\topp P_{i+1}\cong\tau^{c_{i+1}-1}S_{i+1}.
\end{gathered}
\end{align*}
Because $c_i=c_{i+1}+1$, we get
\begin{align*}
\tau^{c_i-1}S_i\cong\tau^{c_i-2}S_{i+1}\cong\tau^{c_{i+1}-1}S_{i+1}.
\end{align*}
This means $P_i$ and $P_{i+1}$ have isomorphic socle. By lemma \ref{uniserialitylemma}, $P_{i+1}$ is submodule of $P_i$ and $P_{i}$ is not minimal.
We skip the case $i=N$ which can be proven similary. 
\end{proof}
 \begin{corollary}\label{COR P_i is minimal iff P i+1 is projective injective} $P_i$ is minimal projective if and only if $P_{i+1}$ is projective-injective.
 \end{corollary}
 \begin{proof}
 $(\Rightarrow)$ Let $P_i$ be minimal projective module with $\topp P_i\cong S_i$. Hence $\rad P_i$ is not projective module and $\topp\rad P_i\cong \tau \topp P_i=\tau S_i\cong S_{i+1}$. So $\rad P_i$ is quotient of $P_{i+1}$, i.e.
\begin{align*}
0\rightarrow \ker f\rightarrow P_{i+1}\rightarrow \rad P_i\rightarrow 0
\end{align*}
is an exact sequence  where $f: P_{i+1}\rightarrow \rad P_i$ is surjective.
  We need to show that $P_{i+1}$ is injective. Suppose not. So, there exists an injective envelope $I$ of $P_{i+1}$ where the first cosyzygy is nontrivial, i.e.
 \begin{align}\label{eq1}
 0\rightarrow P_{i+1}\rightarrow I\rightarrow \Sigma(P_{i+1})\rightarrow 0.
 \end{align}
$\ker f$ is submodule of $P_{i+1}$ and $P_{i+1}$ is submodule of $I$, we have an embedding of $\ker f$ into $I$ which induces the exact sequence
\begin{align}\label{eq2}
0\rightarrow \ker f\rightarrow I\rightarrow\faktor{I}{\ker f}\rightarrow 0.
\end{align}
Now we will compare the lengths of certain modules. We have:
\begin{enumerate}[label=\arabic*)]
\item $\ell(I)=\ell(P_{i+1})+\ell(\Sigma(P_{i+1}))$ by \ref{eq1}.
\item $\ell(I)=\ell(\ker f)+\ell(\faktor{I}{\ker f})$ by \ref{eq2}.
\end{enumerate}
 Since $P_i$ is minimal, by proposition \ref{PROP P is minimal iff ci<ci+1}, $c_i=\ell(P_i)\leq c_{i+1}=\ell(P_{i+1})$. Hence $\ell(I)>\ell(P_i)$.\\
 On the other hand $\soc\rad P_i\cong \soc \faktor{I}{\ker f}$ by applying lemma \ref{topsocConsecutive} to the exact sequences \ref{eq1} and \ref{eq2}. The sequence of the maps $\ker f\hookrightarrow P_{i+1}\hookrightarrow I$ implies that 
 \begin{align*}
 \begin{gathered}
 \ell(\rad P_i)=\ell\left(\faktor{P_{i+1}}{\ker f}\right)<\ell\left(\faktor{I}{\ker f}\right)\Rightarrow\\
 \ell(P_i)=\ell(\rad P_i)+1\leq \ell\left(\faktor{I}{\ker f}\right).
 \end{gathered}
 \end{align*}
 This shows that $P_i$ is a subquotient of $I$ which is not possible by lemma \ref{LEM projectivecannotbequotientlemma}. Therefore the injective envelope of $P_{i+1}$ is itself, $P_{i+1}$ is projective-injective module.\\
 $(\Leftarrow)$ Assume that $P_{i+1}$ is projective-injective. $P_{i+1}$ and $P_i=P(S_i)$ cannot have the isomorphic socle. Since $\rad P_i$ is the quotient of $P_{i+1}$, we have $\ell(\rad P_i)<\ell(P_{i+1})$. This implies
 \begin{align*}
 c_i=\ell(P_i)=\ell(\rad P_i)+1\leq \ell(P_{i+1})=c_{i+1}.
 \end{align*}
 By proposition \ref{PROP P is minimal iff ci<ci+1}, $P_i$ is minimal projective.
 \end{proof}
 
\begin{proposition}\label{PROP ci=ci+1 implies defect zero, minimal projective} If $c_i=c_{i+1}$, then we have
\begin{enumerate}[label=\roman*)]
\item\label{item sth2} $\tau\soc P_i\cong \soc P_{i+1}$.
\item\label{item sth1} $P_i$ is a minimal projective.
\item $P_{i+1}$ is projective-injective module which has no proper injective quotient.
\end{enumerate}
\end{proposition}

\begin{proof}
\begin{enumerate}[label=\roman*)]
\item Notice that $\topp\rad P_i\cong \topp P_{i+1}$. By the corollary to lemma \ref{uniserialitylemma}, either $\rad P_i$ is the quotient of $P_{i+1}$ or vice versa. Because $\ell (\rad P_i)=\ell(P_i)-1=c_i-1<c_{i}=c_{i+1}=\ell(P_{i+1})$, the latter is not possible. We get the surjective map $f: P_{i+1}\rightarrow \rad P_i$. The short exact sequence
 \begin{align}\label{eq3}
 0\rightarrow \ker f\rightarrow P_{i+1}\rightarrow \rad P_{i}\rightarrow 0
 \end{align}
 implies that $\ker f$ is simple module, because $\ell(\ker f)=\ell(P_{i+1})-\ell(\rad P_i)=c_{i+1}-c_i+1=1$. We conclude that $\ker f\cong \soc P_{i+1}$ is consecutive to $\soc \rad P_i\cong \soc P_i$. By the definition of consecutive modules \ref{defconsecutivemodules}, $\tau\soc P_i\cong \soc P_{i+1}$.
 \item If $P_i$ was not minimal projective, then $\rad P_i$ would be projective module. However the sequence \ref{eq3} shows that $\rad P_i$ is the proper quotient of a projective module. Projective module cannot be a quotient of another module by \ref{LEM projectivecannotbequotientlemma}. So, $P_i$  is minimal projective.
 \item By \ref{item sth1}, $P_i$ is minimal projective. By the corollary \ref{COR P_i is minimal iff P i+1 is projective injective}, $P_{i+1}$ is projective-injective.
 It is enough to show that $I(\soc P_i)\ncong P_{i+1}$ because if there exist another injective quotient $I'$ of $P_{i+1}$, then there has to be a sequence of surjective maps $P_{i+1}\rightarrow I(\soc P_i)\rightarrow I'$. If we assume $I(\soc P_i)\cong P_{i+1}$, then we get $\soc P_i\cong \soc I(\soc P_i)\cong \soc P_{i+1}$. However by \ref{item sth2}, we know that $\soc P_i\cong \tau^{-1}\soc P_{i+1}$, therefore $I(\soc P_i)\ncong P_{i+1}$ and $P_{i+1}$ has no proper injective quotients. 
 \end{enumerate}
\end{proof}

\begin{proposition}\label{PROP ci<ci+1 implies nonzero defect} If $c_i<c_{i+1}$, we have
\begin{enumerate}[label=\roman*)]
\item $\tau^{c_{i+1}-c_i+1}\soc P_i\cong \soc P_{i+1}$
\item $P_i$ is a minimal projective
\item $P_{i+1}$ is a projective-injective module and there are $c_{i+1}-c_i$ non-isomorphic proper injective quotients of $P_{i+1}$.
\end{enumerate}
\end{proposition}

\begin{proof} We set 
\begin{align}\label{eq expressing d}
d=c_{i+1}-c_i+1.
\end{align}
\begin{enumerate}[label=\roman*)]
\item Notice that $\topp\rad P_i\cong \topp P_{i+1}$. By the corollary \ref{uniserialitylemma injectiveCOR}, either $\rad P_i$ is the quotient of $P_{i+1}$ or vice versa. Because $\ell (\rad P_i)=\ell(P_i)-1=c_i-1<c_{i}<c_{i+1}=\ell(P_{i+1})$, the latter is not possible. We get the onto map $f: P_{i+1}\rightarrow \rad P_i$. The short exact sequence
 \begin{align}\label{eq6}
 0\rightarrow \ker f\rightarrow P_{i+1}\rightarrow \rad P_{i}\rightarrow 0
 \end{align}
and lemma \ref{topsocConsecutive} imply that $\topp\ker f\cong \tau\soc\rad P_{i}\cong \tau \soc P_i$ and $\ell(\ker f)=c_{i+1}-c_i+1=d$. If we apply Auslander-Reiten translate $\tau^{d-1}$ to the isomorphism $\tau\soc P_{i}\cong\topp\ker f$, then we get $\tau^d\soc P_i\cong \tau^{d-1}\topp\ker f$. Since the length of $\ker f$ is $d$, we obtain $\tau^{d-1}\topp\ker f\cong \soc \ker f\cong \soc P_{i+1}$ . Hence $\tau^d\soc P_i\cong \soc P_{i+1}$.
 \item If $P_i$ was not minimal projective, then $\rad P_i$ would be projective module. However the sequence \ref{eq6} shows that $\rad P_i$ is a proper quotient of a projective module. Projective modules cannot be quotient of another module. So, $P_i$  is minimal projective.
 \item It is enough to prove that the injective envelope $I$ of $\tau\soc P_i$ is the quotient of $P_{i+1}$. First of all, this proves that $P_{i+1}$ is an injective module because non-projective but injective modules has to be quotients of projective-injectives by the dual of lemma \ref{LEM projectivecannotbequotientlemma}. Secondly, when $I$ is an injective quotient, then the sequence of surjective maps
 \begin{align}\label{eq inj list}
 P_{i+1} \rightarrow I\left(\faktor{P_{i+1}}{\soc P_{i+1}}\right)\rightarrow \cdots\rightarrow I=I(\tau\soc P_i)
 \end{align}
 shows that each module are injective because $I$ is injective quotient of all of them.

 Notice that $\soc\left(\faktor{I}{\soc I}\right)\cong\soc\left(\faktor{I}{\tau S}\right)\cong S=\soc P_i$. Relying on the uniseriality lemma \ref{uniserialitylemma}, we get three cases:
 \begin{enumerate}[label=Case \roman*)]
 \item $P_i\cong \faktor{I}{\tau S}$  is not possible because a projective module cannot be a quotient of an injective by \ref{LEM projectivecannotbequotientlemma}.
 \item $P_i\subset \faktor{I}{S}$ is impossible because a projective module cannot be a subquotient of an injective by \ref{LEM projectivecannotbequotientlemma}.
 \item We conclude that $\faktor{I}{\tau S}$ is a proper submodule of $P_i$. 
  \end{enumerate}
Now there are two possibilities, either $\faktor{I}{\tau S}$ is isomorphic to $\rad P_i$ or it is a proper submodule of $\rad P_i$. We will show that the latter is impossible. If we suppose $\faktor{I}{\tau S}$ is a proper submodule of $\rad P_i$, we get 
\begin{align}\label{eq7}
\ell\left(\faktor{I}{\tau S}\right)<\ell(\rad P_i)=c_i-1.
\end{align}
 On the other hand, the exact sequence \ref{eq6} gives
 \begin{align}\label{eq8}
 \ell\left(\rad P_i\right)=\ell(P_{i+1})-\ell(\ker f)
 \end{align}
 Combining \ref{eq7} and \ref{eq8} we get
 \begin{align}\label{eq9}
 \ell\left(\faktor{P}{\tau S}\right)<\ell(\rad P_i)=\ell(P_{i+1})-\ell(\ker f).
 \end{align}
 If we substitute the lengths $\ell(P_{i})=c_i$, $\ell(P_{i+1})=c_{i+1}$, $\ell(\ker f)=d$ (see \ref{eq expressing d}) to \ref{eq9}, then
 \begin{align*}
 \begin{gathered}
 c_{i}-1<c_{i+1}-d\\
 c_i-1<c_{i+1}-\left(c_{i+1}-c_i+1\right)\\
 -1<-1
 \end{gathered}
 \end{align*}
 which is not a true statement. Therefore $\faktor{I}{\tau S}$ has to be isomorphic to the radical of $P_i$, and by the sequence \ref{eq6}, $P_{i+1}$ is the projective cover of $I$, which makes $P_{i+1}$ projective-injective. Moreover, this makes all the modules we considered in \ref{eq inj list} injective. Their socles are $\tau\soc P_i\cong \soc I,\tau^2\soc P_i,\ldots,\tau^{d-1} \soc P_{i},\tau^d\soc P_i\cong\soc P_{i+1}$. There are $d-1=c_{i+1}-c_i$ proper injective quotients of $P_{i+1}$.

 \end{enumerate}
\end{proof}

\begin{proposition}\label{PROP projective module classes vs } Let $\Lambda$ be given by the Kupisch series $(c_1,\ldots,c_N)$. Projective modules $P_{i+1},P_{i+2}\ldots,P_{k},\ldots,P_{j-1}, P_j$ have isomorphic socle $S$ and $\soc P_i\ncong S$, $\soc P_{j+1}\ncong S$ if and only if $c_i\leq c_{i+1}$ and $c_j\leq c_{j+1}$ and  $c_k>c_{k+1}$ for all $i+1\geq k\geq j$.
\end{proposition}

\begin{proof}
$(\Rightarrow)$. If projective modules $P_{k}$  have isomorphic socle $S$ when $i+1\leq k\leq j$, by proposition \ref{PROP P is minimal iff ci<ci+1} we get $c_k>c_{k+1}$. Because $\soc P_{j}\ncong S$, $P_{j+1}$ is not a submodule of $P_{j}$ which makes $P_{j}$ minimal projective. Therefore, proposition \ref{PROP P is minimal iff ci<ci+1} implies $c_{j}\leq c_{j+1}$. Similarly, $\soc P_i\ncong S$ implies that $P_{i+1}$ is not submodule of $P_i$, hence $P_i$ is minimal projective and we get $c_i\leq c_{i+1}$ by the same proposition \ref{PROP P is minimal iff ci<ci+1}.
\par $(\Leftarrow)$  $c_j\leq c_{j+1}$ and $c_i\leq c_{i+1}$ imply that $P_j$ and $P_{i}$ are minimal projectives. The condition $c_k>c_{k+1}$ for $i+1\leq k\leq j$ imply none of the projective modules from $P_{i+1}$ to $P_{j-1}$ are minimal. In other words, each $P_{k}$ is isomorphic to (higher) radical of $P_{i+1}$, i.e. $P_k\cong\rad^mP_{i+1}$ for some $m$. Therefore they have isomorphic socle.
\end{proof}

\begin{definition}\label{defprojectiveclass of modules} The collection of projective modules having isomorphic socle is called \emph{class of projective modules}.
\end{definition}

To find the relations, for each class of projective modules, we need to know the minimal projective and projective-injective of the class to express all the projective modules in it. By proposition \ref{PROP projective module classes vs }, the information of projective-injective module of each class can be obtained from minimal projective of the previous class. Therefore, the important data is the relative positions and the length of minimal projectives. If we know the top and the length of a minimal projective, it produces  the relation starting at the index of the top with the same length. We order each relation according to their relative position in the Kupisch series which describes classes of projective modules.
Let $(c_1,\ldots,c_N)$ be a Kupisch series and $(c_{i_1},c_{i_2},\ldots,c_{i_r}) $ be a subsequence of it such that each $c_{i_m}$ is the length of minimal projective $P_{i_m}$. Then the relations defining the algebra are 
\begin{align}
\begin{gathered}\label{eqsKupishtorels}
\alpha_{i_1+c_{i_1}-1}\circ\alpha_{i_1+c_{i_1}-2}\circ\cdots\circ\alpha_{i_1+1}\circ\alpha_{i_1}=0\\
\alpha_{i_2+c_{i_2}-1}\circ\alpha_{i_2+c_{i_2}-2}\circ\cdots\circ\alpha_{i_2+1}\circ\alpha_{i_2}=0\\
\vdots\\
\alpha_{i_r+c_{i_r}-1}\circ\alpha_{i_r+c_{i_r}-2}\circ\cdots\circ\alpha_{i_r+1}\circ\alpha_{i_r}=0
\end{gathered}
\end{align}

The converse is also true. We start with a system of relations which gives the index of minimal projectives. And the ordering of the relations gives us the ordering of the minimal projectives in the Kupisch series.

For our purposes, the socle of a projective module carries more information than the length of a projective module. Therefore, from now on we use relations (see \ref{relations} below) to define any cyclic Nakayama algebra.

\subsection{Properties of systems of relations}
We describe cyclic Nakayama algebras in terms of the system of relations. Let $\Lambda$ be Nakayama algebra of rank $N$ with $N\geq 2$ given by $r\geq 1$ many relations $\boldsymbol\alpha_{k_{2i}}\ldots\boldsymbol\alpha_{k_{2i-1}}=0$ where $1\leq i\leq  r$ and $k_{f}\in\left\{1,2,\ldots,N\right\}$ for quiver $Q$ as in the figure \ref{quiverQ}. Notice that each arrow $\boldsymbol\alpha_i$ starts at the vertex $i$ and ends at the vertex $i+1$ with the exception $\boldsymbol\alpha_N$ which starts at vertex $N$ and ends at the vertex $1$.\\
We assume that the algebra $\Lambda$ is given as the path algebra of the quiver $Q$ modulo the system of relations REL
\begin{gather}
\begin{gathered}\label{relations}
\boldsymbol\alpha_{k_2}\ldots\boldsymbol\alpha_{k_1+1}\boldsymbol\alpha_{k_1}\ \ =0 \\
\boldsymbol\alpha_{k_4}\ldots\boldsymbol\alpha_{k_3+1}\boldsymbol\alpha_{k_3}\ \ =0 \\
\dots \\
\boldsymbol\alpha_{k_{2r-2}}\ldots\boldsymbol\alpha_{k_{2r-3}+1}\boldsymbol\alpha_{k_{2r-3}}=0\\
\boldsymbol\alpha_{k_{2r}}\ldots\boldsymbol\alpha_{k_{2r-1}+1}\boldsymbol\alpha_{k_{2r-1}}=0.
\end{gathered}
\end{gather}
It is clear that $\Lambda$ is bound quiver algebra $\faktor{\mathbb{K}Q}{\left<REL\right>}$ where $\mathbb{K}$ is algebraically closed field.
First of all, we assume that this system of relations is irredundant i.e. none of the relations is a consequence of the other relations. 
Secondly, we order them according to their starting index $1\leq k_1 < k_3<\ldots<k_{2r-1}\leq N$, which is equivalent to cyclic permutation of the corresponding Kupisch series as in \ref{eqsKupishtorels}.


\begin{remark} The irredundant system of equations REL \ref{relations} satisfies:
\begin{enumerate}[label=\arabic*)]
\item There exists at most one relation starting at each $k_j\in[1,N]$.
\item There exists at most one relation ending at each $k_j\in[1,N]$.
\item The number of relations $r$ is less than or equal to the rank $N$.
\item There is no restriction on the lengths of relations except that the system of relations has to be irredundant. 
\item Each relation is composition of at least two arrows, because $\Lambda$ is a cyclic Nakayama algebra and there is no simple projective module.
\item 
As we stated in \ref{defprojectiveclass of modules}, projective modules can be described with respect to the socles. By using the relations \ref{relations}, classes of projective modules are; 
\begin{align}\label{classesofprojectives}
\begin{aligned}
P_{k_{1}}\hookrightarrow\ldots\hookrightarrow  P_{(k_{2r-1})+1} \quad\text{ have simple } S_{k_{2}} \text{  as their socle}\\
P_{k_3}\hookrightarrow\ldots\hookrightarrow P_{k_1+1}  \quad\text{ have simple } S_{k_4} \text{  as their socle}\\
\vdots\qquad\qquad\qquad\qquad\qquad\qquad\qquad\\
P_{k_{2r-1}}\hookrightarrow\ldots\hookrightarrow P_{(k_{2r-3})+1}\quad  \text{ have simple } S_{k_{2r}} \text{  as their socle}
\end{aligned}
\end{align}
where $P_{k_1}, P_{k_3},\ldots, P_{k_{2r-1}}$ are minimal projectives and $P_{k_{1}+1},P_{k_{3}+1}\ldots,P_{k_{2r-1}+1}$ are projective-injectives.  We recall the relationship between Kupisch series and the REL stated in \ref{relations}. 
\begin{enumerate}
\item $c_i>c_{i+1}$ $\iff$ $P(S_{i})$ is not minimal projective, i.e. $\rad P_i\cong P_{i+1}$.
\item $c_i=c_{i+1}$ $\iff$ $P(S_i)$ is minimal projective and $P(S_{i+1})$ is projective-injective and has no proper injective quotients.
\item $c_i<c_{i+1}$ $\iff$ $P(S_i)$ is minimal projective and $P(S_{i+1})$ has proper injective quotients.
\end{enumerate}

\end{enumerate}
\end{remark}

\subsection{The socle set and the base set}\label{subsec soclebase}

Let $\cS(\Lambda)$ be the complete set of representatives of socles of projective modules over $\Lambda$. By using the system of relations \ref{relations}, it is 
\begin{align}\label{defsocleset}
\cS(\Lambda)=\left\{S_{k_2}, S_{k_4},\ldots,S_{k_{2r}}\right\}.
\end{align}
$\cS(\Lambda)$ is called the \emph{socle set}.
We define the set $\cS'(\Lambda)$ which is the complete set of representatives of Auslander-Reiten translates of socles of indecomposable projective modules. Hence $S_i\in\cS(\Lambda)$ if and only if $\tau S_i\in \cS'(\Lambda)$. Because $\tau S_i\cong S_{i+1}$, we get 
\begin{align}\label{deftopset}
\cS'(\Lambda)=\left\{S_{k_{2}+1}, S_{k_4+1},\ldots,S_{k_{2r}+1}\right\}.
\end{align}
$\cS'(\Lambda)$ is called the \emph{top set}.

\begin{definition}\label{defshortestset} An indecomposable $\Lambda$-module $M$ satisfying $\topp M\in\cS'(\Lambda)$ and $\soc M\in\cS(\Lambda)$  is called \emph{shortest} if the composition factors of $M$ except $\topp M$ and $\soc M$ are not elements of the socle and top sets.
\end{definition}

\begin{definition}\label{defbaseset} Let $\Lambda$ be a cyclic Nakayama algebra defined by the system of $r$-many relations. For each $j\in\left\{1,2,\ldots,r\right\}$, let $\Delta_j$ be the shortest indecomposable uniserial module with $\soc\Delta_j\cong S_{k_{2j}}$ and $\topp \Delta_j\cong S_{k_{2(j-1)}+1}$. The complete set of representatives of modules $\Delta_j$'s is called \emph{the base set} and denoted by $\cB(\Lambda)$. Explicitly we have

\begin{align*}
\cB(\Lambda):=\left\{ \Delta_1\cong\begin{vmatrix}
    S_{k_{2r}+1} \\
    \vdots  \\
    S_{k_{2}}
\end{vmatrix}\!, \Delta_2\cong\begin{vmatrix}
    S_{k_{2}+1}  \\
    \vdots  \\
   S_{k_{4}}
\end{vmatrix}\!,..,\Delta_j\cong\begin{vmatrix}
   S_{k_{2(j-1)}+1}  \\
    \vdots  \\
    S_{k_{2j}}
\end{vmatrix}\!,..,\Delta_r\cong \begin{vmatrix}
   S_{k_{2r-2}+1}  \\
    \vdots  \\
    S_{k_{2r}}
\end{vmatrix}\!\right\}.
\end{align*}
\end{definition}

\begin{proposition}\label{PROP properties of the basesetproperties} Regarding the base set $\cB(\Lambda)$, we have:
\begin{enumerate}[label=\arabic*)]
\item\label{item1} The socle of each $\Delta_i$ is an element of the socle set $\cS(\Lambda)$, i.e. $\soc\Delta_i\in\cS(\Lambda)$. Any element of the socle set is a socle of an element of $\cB(\Lambda)$.
\item The top of each $\Delta_i$ is an element of the top set $\cS'(\Lambda)$. Any element of the top set is a top of an element of $\cB(\Lambda)$.
\item\label{item3} Any simple $\Lambda$-module $S$ appears in the composition series of exactly one $\Delta_i$. Equivalently, the simple composition factors of distinct $\Delta_j$'s are disjoint.
\item\label{itemitem 4} Distinct elements of the base set are Hom-orthogonal i.e. $\Hom_{\Lambda}\left(\Delta_i,\Delta_j\right)\cong 0$ when $i\neq j$ and $\Hom_{\Lambda}(\Delta_i,\Delta_i)\cong\mathbb{K}$.
\item\label{itemitem 5} Each $\Delta_i$ is a submodule of an indecomposable projective-injective module.
\item\label{item last in prop 2.18} $\Delta_i$ is simple $\Lambda$-module if and only if $S\cong\Delta_i$ satisfies $S\in\cS'(\Lambda)\cap\cS(\Lambda)$.
\end{enumerate}
\end{proposition}
 \begin{proof}
 \begin{enumerate}[label=\arabic*)]
 \item By the definition of the base set \ref{defbaseset}, $S\in \cS(\Lambda)$ if and only if $S\subset
 \Delta_i$ for some $i$.
 \item By the definitions \ref{defsocleset} and \ref{deftopset}, $S\in\cS'(\Lambda)$ if and only if $S\cong \faktor{\Delta_i}{\rad\Delta_i}$ for some $i$.
 \item Assume that $S$ appears in the simple composition factors of both $\Delta_i$ and $\Delta_j$. Therefore for some positive integers $a,b$ we have
 \begin{align*}
 S\cong\tau^a\topp\Delta_i\cong\tau^b\topp\Delta_j.
 \end{align*}
 Without loss of generality,  we assume that $a\geq b$. This implies
 \begin{align}\label{eqa1}
 \tau^{a-b}\topp\Delta_i\cong\topp\Delta_j.
 \end{align}
 $\Delta_i$ is uniserial, so for some number $c$,
 $\tau^c\topp\Delta_i\cong\soc\Delta_i$. Because $S$ is a simple module of the composition factors of $\Delta_i$, we get $c\geq a$. If we combine this observation with \ref{eqa1}, we conclude that $\tau^{a-b}\topp\Delta_i\cong\topp\Delta_j$ is a simple module in the composition factors of $\Delta_i$ which implies that $\Delta_i$ is not shortest module according to the definition \ref{defshortestset}. Therefore any simple module $S$ appears in the composition series of at most one $\Delta_i$.
\par $\Lambda$ is cyclic, if $S\in\cS(\Lambda)$, then $\tau^NS\cong S$. By the result \ref{PROP properties of the basesetproperties} \ref{item1}, let $S\cong\soc\Delta_i$ for some $i$. Therefore every simple $\Lambda$-module appears in the composition factors of at least one $\Delta_j$ for some $j$. 
 \item By the result \ref{item3}, the simple composition factors of distinct $\Delta_j$'s are
disjoint, therefore $\Hom_{\Lambda}\left(\Delta_i,\Delta_j\right)$ is trivial when $i\neq j$ and $\mathbb{K}$ when $i=j$.
\item Let $P$ be a projective-injective module. Because $\soc\Delta_i\in \cS(\Lambda)$ and $\soc P\in\cS(\Lambda)$, by the uniseriality lemma \ref{uniserialitylemma}, either $\Delta_i$ is submodule of $P$ or $P$ is submodule of $\Delta_i$. By lemma \ref{LEM projectivecannotbequotientlemma}, $\Delta_i$ is a submodule of some $P$. 
\item If $S\in\cS(\Lambda)\cap \cS'(\Lambda)$, then $S$ itself is the shortest module. Therefore $S\in\cB(\Lambda)$ by the definition \ref{defshortestset}. If $S\in\cB(\Lambda)$ and $S$ is a simple module, then $\topp S\in\cS'(\Lambda)$ and $\soc S\in\cS(\Lambda)$. Since $S$ is simple we get $S\cong\topp S\cong\soc S$ which means $S\in\cS(\Lambda)\cap\cS'(\Lambda)$.
 \end{enumerate}
 \end{proof}

\begin{remark}\cite[remark 2.2.2]{sen2018varphi} The following are equal:
\begin{enumerate}[label=\arabic*)]
\item the number of relations,
\item the number of minimal projectives,
\item the number of projective-injectives,
\item the number of minimal injectives,
\item the number of non-isomorphic socles of projective modules,
\item the number of non-isomorphic tops of injective modules,
\item the number of elements of the base set.
\end{enumerate}
\end{remark}

\subsection{Other realizations of the base set}

Here, we establish the link between the base set and particular syzygy modules.

\begin{lemma}\label{LEM base set is not projective injective} Elements of the base set cannot be projective-injective modules.
\end{lemma}
\begin{proof}
Assume that projective-injective module $P_i$ is an element of the base set. Therefore $\topp P_i\cong S_i\in\cS'(\Lambda)$ which implies that $\tau^{-1}S_{i}\cong S_{i-1}$ is an element of the socle set $\cS(\Lambda)$. Because algebra is cyclic, $\ell(P_{i-1})\geq 2$ and there is a map $f:P_{i}\rightarrow P_{i-1}$ which induces the short exact sequence
\begin{align*}
0\rightarrow \ker f\rightarrow P_i\rightarrow \rad P_{i-1}\rightarrow 0.
\end{align*}
If $\ker f$ is trivial, then $P_i\cong\rad P_{i-1}$, which makes $P_i$ proper submodule of $P_{i-1}$. This violates the projective-injectivity of $P_i$.\\
If $\ker f$ is nontrivial, then $\soc \ker f\cong\soc P_i$ and $\topp\ker f\cong\tau\soc\rad P_{i-1}\cong \tau\soc P_i$ by lemma \ref{topsocConsecutive}. Therefore the top of $\ker f$ is an element of the top set $\cS'(\Lambda)$, socle of $\ker f$ is an element of $\cS(\Lambda)$ and $\ker f\subset P_i$. Therefore $P_i$ is not the shortest module (see def. \ref{defshortestset}). This shows that $P_i$ cannot be an element of the base set $\cB(\Lambda)$.
\end{proof}

\begin{proposition}\label{PROP x in base set iff x is omegaone of radical} A module $X$ is an element of the base set if and only if $X$ is the first syzygy of the radical of a minimal projective.
\end{proposition}

\begin{proof}
$(\Rightarrow)$. Assume that $X\in \cB(\Lambda)$. By lemma \ref{LEM base set is not projective injective}, $X$ cannot be a projective-injective module. $\soc X\in\cS(\Lambda)$ implies that $X$ is a submodule of a projective-injective module $PI$ by the uniseriality lemma \ref{uniserialitylemma}. Let $Q$ be the quotient i.e. $Q=\faktor{PI}{X}$. The socle of $Q$ is $\tau^{-1}\topp X$ by lemma \ref{topsocConsecutive}, and $\topp X\in \cS'(\Lambda)$ implies that $\soc Q\in\cS(\Lambda)$. Therefore $Q$ is a submodule of a projective-injective module $PI'$. Moreover, $Q$ is a submodule of a minimal projective module $P$ where $\soc P\cong \soc PI'\cong \soc Q$. Such a $P$ exists since $Q$ is the quotient of a  projective module and quotients cannot be projective.\\
We want to prove that $Q\cong\rad P$. Assume to the contrary that let $Q\ncong \rad P$. $Q$ is submodule of $P$, therefore there exists $i$ such that $Q\cong \rad^iP$ where $i\geq 2$.

Let $P'$ be the indecomposable projective cover of $\rad^{i-1}P$. 
\begin{claim}\label{Cla 1   2.22} $\soc P'\ncong\soc P$.
\end{claim}
\begin{proof}
If $\soc P\cong \soc P'$, then $P'$ becomes a projective submodule of $P$ which violates minimality of $P$. Hence $\soc P\ncong\soc P'$.
\end{proof}

\begin{claim}\label{Cla 2   2.22} $\soc P'\ncong\soc PI$
\end{claim}
\begin{proof}
If $\soc P'\ncong \soc PI$, then $PI$ becomes a submodule of $P'$ by the uniseriality lemma. This violates projective-injectivity of $PI$, i.e. if $S$ is socle of a projective module, then indecomposable projective-injective module is the longest module with the socle $S$.
\end{proof}

\begin{claim}\label{Cla 3   2.22} $\topp \rad P'\cong\topp PI\cong\topp Q$.
\end{claim}
\begin{proof}
Since $P'=P(\rad^{i-1}P)$, then the top of $P'$ is $\tau^{i-1}\topp P$. Therefore $\topp\rad P'\cong\tau\left(\tau^{i-1}\topp P\right)\cong\tau^i\topp P$.\\
Since $Q\cong\rad^iP$, then $\topp Q\cong \tau^i\topp P$. Moreover, $Q$ is the quotient of $PI$, hence $\topp Q\cong \topp PI$.
As a result, we get the isomorphism.
\end{proof}

\begin{claim}\label{Cla  4  2.22}
$\rad P'$ is a proper quotient of $PI$.
\end{claim}
\begin{proof}
By the previous claim \ref{Cla 3   2.22}, $\topp \rad P'\cong \topp PI$. By the corollary \ref{uniserialitylemma injectiveCOR}, $\rad P'$ is the quotient of $PI$, since $PI$ is a projective-injective module. It is proper, because by the claim \ref{Cla 1   2.22}, $\soc\rad P'\cong\soc P'\ncong\soc PI$.
\end{proof}

\begin{claim}\label{Cla 5   2.22}
$Q$ is a proper quotient of $\rad P'$.
\end{claim}

\begin{proof}
By the claim \ref{Cla 3   2.22}, $\topp Q\cong\topp \rad P'$. By the corollary \ref{uniserialitylemma injectiveCOR}, we have three possibilities.
\begin{enumerate}[label=Case \arabic*)]
\item $Q\cong \rad P'$ is impossible, because it makes $Q$ a proper submodule of $P'$ and $P'$ a proper submodule of $P$, which violates the minimality of $P$.
\item Let $S\cong\topp \rad P'\cong\topp Q$. Consider the following exact sequences
\begin{align*}
\begin{gathered}
0\rightarrow Q\cong\rad^iP\rightarrow \rad^{i-1}P\rightarrow \tau^{-1}S\rightarrow 0\\
0\rightarrow \rad P'\rightarrow P'\rightarrow \tau^{-1}S\rightarrow 0.
\end{gathered}
\end{align*}
If $\rad P'$ were quotient of $Q$, then $P'$ would be quotient of $\rad^{i-1} P$. This violates projectivity of $P'$.
\item Thus, $Q$ is a proper quotient of $\rad P'$.
\end{enumerate}
\end{proof}

\begin{claim}\label{Cla 6   2.22} There exists a surjective map $f:PI\rightarrow\rad P'$ such that $\ell(\ker f)<\ell(X)$.
\end{claim}
\begin{proof}
If we combine the previous claims \ref{Cla  4  2.22} and \ref{Cla 5   2.22}, we get $PI\twoheadrightarrow \rad P'\twoheadrightarrow Q$, therefore there exist surjective map $f:PI\twoheadrightarrow \rad P'$, which induces the short exact sequence
\begin{align}\label{eq 2239}
0\rightarrow \ker f\hookrightarrow PI\twoheadrightarrow \rad P'\rightarrow 0.
\end{align}
Since $Q$ is the quotient of $PI$, we have another exact sequence
\begin{align}\label{eq239}
0\rightarrow X\rightarrow PI\rightarrow Q\rightarrow 0.
\end{align}
The lengths of the modules satisfy $\ell(PI)=\ell(X)+\ell(Q)=\ell(\ker f)+\ell(\rad P')$. Since $Q$ is a proper quotient of $\rad P'$, then $\ell(Q)<\ell(\rad P')$. Therefore, we get
\begin{align*}
\ell(X)>\ell(\ker f).
\end{align*}
Both $X$ and $\ker f$ are submodules of $PI$. Moreover, by the uniseriality lemma \ref{uniserialitylemma} together with \ref{eq239}, $\ker f$ is a proper submodule of $X$. 
\end{proof}
\begin{claim}\label{Cla 7   2.22} $soc\ker f\in\cS(\Lambda)$ and $\topp \ker f\in\cS'(\Lambda)$.
\end{claim}
\begin{proof}
Since $\ker f$ is submodule of $PI$, then $\soc \ker f\in\cS(\Lambda)$. Furthermore, the exact sequence \ref{eq 2239} shows that $\tau\soc \rad P'\cong\topp\ker f$  by lemma \ref{topsocConsecutive}. Since $\soc\rad P'\in\cS(\Lambda)$, we get $\topp\ker f\in\cS'(\Lambda)$.
\end{proof}
By the claims \ref{Cla 6   2.22} and \ref{Cla 7   2.22}, $X$ is not the shortest module, because $\ker f$ is the proper submodule of $X$ and shorter than $X$. We get the contradiction, if the quotient $Q$ is not isomorphic to $\rad P$, then $X$ cannot be an element of $\cB(\Lambda)$.

$(\Leftarrow)$ Let $Q\cong \rad P$ where $P$ is minimal projective. By the corollary \ref{COR P_i is minimal iff P i+1 is projective injective}, the projective cover of $Q$ is projective-injective module. Let's denote $P(Q)\cong P(\rad P)$ by $PI$. 

Assume to the contrary that module $X$ satisfying the exact sequence $0\rightarrow X\rightarrow PI\rightarrow Q\rightarrow 0$ is not an element of $\cB(\Lambda)$. Since $\soc X\in\cS(\Lambda)$, and $X$ is not shortest, then there exists a submodule $Y$ of $X$ such that $Y\in\cB(\Lambda)$. By lemma \ref{topsocConsecutive}, the socle of the quotient $Q'=\faktor{PI}{Y}$ is an element of $\cS(\Lambda)$. This forces that there has to be a projective module $P'$ such that $Q'$ is a proper submodule of $P'$ and its socle is not isomorphic to socles of $P$ and $PI$. We have
\begin{align*}
\soc P'\cong \tau^i\soc P,\,\, \soc PI\cong\tau^j\soc P'\cong \tau^{i+j}\soc P
\end{align*}
and in particular $\topp P'\ncong \topp P$ and $\topp P'\ncong \topp PI$. However, because of $\topp PI\cong\tau \topp P$, there is no projective module $P'$ satisfying the above conditions. Hence $X$ has to be the shortest module, hence $X\in\cB(\Lambda)$.\end{proof}

\begin{proposition}\label{PROP x is an element of the base set iff it is second syzygy}
A module $X$ is an element of the base set if and only if $X$ is the second syzygy of a simple module which is the top of a minimal projective module. 
\end{proposition}

\begin{proof}
We recall proposition \ref{PROP x in base set iff x is omegaone of radical}, i.e. $X\in\cB(\Lambda)$ if and only if $X\cong\Omega^1(\rad P)$ where $P$ is minimal projective. Therefore,  the simple module $S=\faktor{P}{\rad P}\cong \topp P$ proves the claim, i.e.  $\Omega^2(S)\cong \Omega^1(\rad P)\cong X$.
\end{proof}

\subsection{The category of filtered modules} \label{subsec filt}
We want to draw attention to $\Lambda$-modules which are filtered by the base set $\cB(\Lambda)$.
\begin{definition}\label{deffilteredmodules}
Let $Filt(\cB(\Lambda))$ denote the category of $\Lambda$-modules which are filtered by the base set $\cB(\Lambda)$, i.e. $M\in\modd\Lambda$ is an object of $Filt(\cB(\Lambda))$ if and only if $\soc M\in\cS(\Lambda)$ and $\topp M\in\cS'(\Lambda)$. All the maps in $Filt(\cB(\Lambda))$ are induced from $\Lambda$-module morphisms.
\end{definition}

\begin{proposition}{\label{PROP filtered modules have unique filtration}}
 Let $M$ be an indecomposable $\Lambda$-module. Suppose  $\topp M\in \mathcal S'(\Lambda)$ and $\soc M\in \mathcal S(\Lambda)$ up to isomorphisms. Then
 \begin{enumerate}[label=\roman*)]
 \item If $M$ is shortest among such modules, then $M\cong \Delta_i$ for some $i$.
 \item  The module $M$ has a filtration by the modules in $\cB(\Lambda)$.
\end{enumerate}
\end{proposition}
\begin{proof}
See \cite[proposition 2.5.8]{sen2018varphi}.
\end{proof}

\begin{remark}\label{REM delta lengths} Let $M$ be an indecomposable $\Lambda$-module. If $M\in Filt(\cB(\Lambda))$, then there is a composition series of modules
\begin{align*}
0=M_0\subset M_1\subset M_2\subset \cdots \subset M_n=M
\end{align*}
where each $M_i\in Filt(\cB(\Lambda))$ and $\faktor{M_i}{M_{i-1}}\in\cB(\Lambda)$  by the definition.
\end{remark}

 We will show equivalent formulations of this category.

\begin{proposition}\label{PROP pdim M>2 then all higher filtered} If $M$ is an indecomposable module with projective dimension greater or equal than two, then $\Omega^2(M)$ has $\cB(\Lambda)$-filtration.
\end{proposition}
\begin{proof}
We consider the projective resolution of $M$:
\begin{align*}
\cdots\rightarrow P_2\rightarrow P_1\rightarrow M\rightarrow 0.
\end{align*}
The socles of $P_1$ and $P_2$ are in $\cS(\Lambda)$. It is enough to show that $\topp\Omega^2(M)$ is in the top set $\cS'(\Lambda)$. If we apply lemma \ref{topsocConsecutive} to the exact sequence
\begin{align*}
0\rightarrow \Omega^2(M)\rightarrow P_2\rightarrow \Omega^1(M)\rightarrow 0
\end{align*}
we get $\topp\Omega^2(M)\cong \tau\soc\Omega^1(M) \cong\tau\soc P_1$. Since $\soc P_1\in\cS(\Lambda)$, $\topp\Omega^2(M)\in\cS'(\Lambda)$. Therefore $\Omega^2(M)$ has $\cB(\Lambda)$-filtration.
\end{proof}

\begin{proposition}\label{PROP all higher syzygies are filtered} If $M$ is a $\cB(\Lambda)$-filtered indecomposable module, then all of the nontrivial higher syzygies of $M$ and their projective covers have $\cB(\Lambda)$-filtration.
\end{proposition}
\begin{proof}
Socle of any projective module is in $\cS(\Lambda)$. It is enough to show that the top of the projective covers of the syzygies are in $\cS'(\Lambda)$ which implies all the projective covers are $\cB(\Lambda)$-filtered by the definition \ref{deffilteredmodules}.
\par Consider the projective resolution of $M$
\begin{align*}
\cdots\rightarrow P(\Omega^2(M))\rightarrow P(\Omega^1(M))\rightarrow P(M)\rightarrow M\rightarrow 0.
\end{align*}
$M\in Filt(\cB(\Lambda))$ implies that $\topp P(M)\cong\topp M\in\cS'(\Lambda)$, therefore $P(M)$ has $\cB(\Lambda)$-filtration.\\
By lemma \ref{topsocConsecutive}, $\topp\Omega^1(M)\cong \tau\soc M$ provided that $\Omega^1(M)$ is non-trivial, therefore $\topp\Omega^1(M)\in\cS'(\Lambda)$, which shows $\Omega^1(M)\in Filt(\cB(\Lambda))$. Claim holds by induction.
\end{proof}

\begin{proposition}\label{PROP X has filtration then it is second syzygy} If $X$ is an indecomposable $\cB(\Lambda)$-filtered $\Lambda$-module which is not projective-injective, then there exist a module $M$ such that $X\cong \Omega^2(M)$.
\end{proposition}
\begin{proof}
$X$ has $\cB(\Lambda)$-filtration so $S=\soc X\in\cS(\Lambda)$. Therefore $X$ is a submodule of projective-injective $P$ with socle $S$. The socle of the quotient $Q=\faktor{P}{X}$ is $\tau$-translate of the top of $X$, hence $Q$ is a submodule of another projective module $P'$. The exact sequences
\begin{align*}
\begin{gathered}
0\rightarrow X\rightarrow P\rightarrow Q=\faktor{P}{X}\rightarrow 0\\
0\rightarrow Q\rightarrow P'\rightarrow \faktor{P'}{Q}\rightarrow 0
\end{gathered}
\end{align*} 
show that $\Omega^2(M)\cong X$ where $M=\faktor{P'}{Q}$, since $Q$ is not projective.
\end{proof}

\begin{remark}\label{uniquefilt}
In \cite{sen2018varphi}, we used the notation $\bf\Delta$ instead of $\cB(\Lambda)$. 
\end{remark}

Here we recall the definition of wide subcategory cogenerated by projective-injective $\Lambda$-module. Let $\hat{P}$ be the additive generator of projective-injective $\Lambda$-modules. Then $\cF:=Cogen(\hat{P})=\left\{M\in\modd\Lambda\,\vert\, M\,\text{is submodule of}\,\hat{P} \right\}$ is a torsion-free class. The wide subcategory  associated to $\cF$ is
\begin{align*}
\cW:=\left\{X\in\cF\,\vert\, \forall(g:X\rightarrow Y)\in\cF, \text{then}\,\coker(g)\in\cF\right\}.
\end{align*}

\begin{proposition}\label{PROP filtered module category is equivalent to wide subcategory} The category $Filt(\cB(\Lambda))$ is equivalent to the wide subcategory $\cW$ cogenerated by projective-injective $\Lambda$-module $\hat{P}$.
\end{proposition}

\begin{proof}
$(\Rightarrow)$ Assume that an indecomposable $\Lambda$-module $M$ has $\cB(\Lambda)$-filtration. Therefore $\soc M\in\cS(\Lambda)$ and by the uniseriality lemma \ref{uniserialitylemma} together with \ref{LEM projectivecannotbequotientlemma}, it is a submodule of a projective-injective $\Lambda$-module. Therefore $M\in \cF$.
\par We need to show that for any module $Y\in \cF$ and any map $g: M\rightarrow Y$ in $\cF$, the $\coker(g)$ is in $\cF$. Since $M$ has $\cB(\Lambda)$-filtration, the top of $M$ is in $\cS'(\Lambda)$. This implies that $\soc\coker(g)\in\cS(\Lambda)$, because $\soc\coker(g)$ and $\topp M$ are consecutive simple $\Lambda$-modules. By lemma \ref{uniserialitylemma}, $\coker(g)$ is a submodule of projective-injective module. Therefore  $\coker(g)\in\cF$ for any $g:M\rightarrow Y$ in $\cF$.\par
$(\Leftarrow)$ Let $M$ be an indecomposable $\Lambda$-module in $\cW$. This implies that $M\in\cF$ and for any map $g:M\rightarrow Y$ in $\cF$, $\coker(g)$ is in $\cF$. The former implies that the socle of $M$ is an element of the set $\cS(\Lambda)$, because $M$ is a submodule of projective-injective module. Similarly, the latter implies that the socle of $\coker(g)$ is in $\cF$, hence a submodule of projective-injective module. We need to show that $\topp M$ is in $\cS'(\Lambda)$. Without loss of generality, let $Y$ be an indecomposable module in $\cF$. By the short exact sequence
\begin{align*}
0\rightarrow \im g\rightarrow Y\rightarrow \coker(g)\rightarrow 0
\end{align*}
the top of $\im g$ and the socle of $\coker(g)$ are consecutive modules. Because $\soc\coker(g)\in\cS(\Lambda)$, we get $\topp\im g\in\cS'(\Lambda)$. This shows that $M$ has $\cB(\Lambda)$-filtration since $\topp M\cong \topp\im g\in\cS'(\Lambda)$.
\end{proof}

\begin{corollary}\label{COR category filt is exact} The category $Filt(\cB(\Lambda))$ is extension-closed, exact and abelian. Its simple objects are the elements of $\cB(\Lambda)$.
\end{corollary}
\begin{proof}
By \ref{PROP filtered module category is equivalent to wide subcategory}, $Filt(\cB(\Lambda))$ is a wide subcatecory of $\modd\Lambda$. Therefore it is exact abelian category which is closed under extensions. Elements of $\cB(\Lambda)$ are simple objects because they are hom-orthogonal by the statement \ref{itemitem 4} of proposition \ref{PROP properties of the basesetproperties}.
\end{proof}

Now we are ready to construct new algebra which is filtered by the elements of $\cB(\Lambda)$.
\begin{definition}\label{deffilteredalg} Let $\bm{\varepsilon}(\Lambda)$ be the endomorphism algebra of the direct sum of projective $\Lambda$-modules which are projective covers of elements of the top set $\cS'(\Lambda)$, i.e.
\begin{align*}
\bm{\varepsilon}(\Lambda):=\End_{\Lambda}\left(\bigoplus\limits_{S\in \cS'(\Lambda)}P(S)\right).
\end{align*}
\end{definition}
Since $\bm{\varepsilon}(\Lambda)$ is the endomorphism algebra of $\cB(\Lambda)$-filtered projective modules and the category $Filt(\cB(\Lambda))$ is equivalent to the category of the second syzygies by propositions \ref{PROP all higher syzygies are filtered}, \ref{PROP pdim M>2 then all higher filtered} and \ref{PROP X has filtration then it is second syzygy}, we give the name \emph{syzygy filtered algebra} to $\bm\varepsilon(\Lambda)$.

\subsection{Modules of syzygy filtered algebras}\label{subsec modules over filtered1}

Here we recall some definitions and statements from \cite{aus} mainly chapter 2 to relate the module category of $\Lambda$ and the module category of the syzygy filtered algebra $\bm\varepsilon(\Lambda)$. For any module $A$ in $\modd \Lambda$, the functor $\Hom_{\Lambda}(A,-):\modd \Lambda\rightarrow\modd \Gamma$ where $\Gamma=\End_{\Lambda}(A)^{op}$ is called evaluation functor. 
Suppose $P$ is a projective $\Lambda$-module. Then we denote by $\modd P$ the full subcategory of $\modd\Lambda$ whose objects are those $A$ in $\modd \Lambda$ which have 
minimal projective presentations 
$P_1\rightarrow P_0\rightarrow A\rightarrow 0$ with $P_0,P_1 \in\add P$. 
\begin{proposition}\label{PROP Auslanders construction}\cite[section II, proposition 2.5]{aus}
Let $P$ be a projective $\Lambda$-module and let $\Gamma=\End_{\Lambda} (P)^{op}$. Then the restriction $\Hom(P,-)\vert_{\mod P}:\modd P\rightarrow \modd\Gamma$ of the evaluation functor $\Hom(P,-):\modd\Lambda\rightarrow\modd \Gamma$ is an equivalence of categories.
\end{proposition}

We apply this to our set up. Let $\cP$ be $\cB(\Lambda)$-filtered projective module, i.e.
\begin{align*}
\cP=\bigoplus\limits_{S\in\cS'(\Lambda)}P(S).
\end{align*}
 Therefore by proposition \ref{PROP Auslanders construction}, we get the categorical equivalence
\begin{align*}
\Hom_{\Lambda}(\cP,-):\modd\cP\rightarrow \modd\End_{\Lambda}(\cP)^{op}.
\end{align*}

Now we need to express $\modd\cP$ in terms of $Filt\cB(\Lambda)$.

\begin{proposition}\label{PROP filtered cat is mod P} $Filt(\cB(\Lambda))\cong\modd\cP$ where $\cP=\bigoplus\limits_{S\in\cS'(\Lambda)} P(S)$.
\end{proposition}
\begin{proof}
$\cP$ is the $\cB(\Lambda)$-filtered projective module, so $\cP\in Filt(\cB(\Lambda))$. Also, $\cP$ is in $\modd\cP$ by definition of the full subcategory. Therefore, the projective module $\cP$ is in both $\modd\cP$ and $Filt(\cB(\Lambda))$.
\par Let $M$ be an indecomposable non-projective $\cB(\Lambda)$-filtered module. By proposition \ref{PROP all higher syzygies are filtered}, $P(M)$ and $P(\Omega^1(M))$ are filtered by $\cB(\Lambda)$, hence the sequence
\begin{align*}
\cdots\rightarrow P(\Omega^1(M))\rightarrow P(M)\rightarrow M\rightarrow 0
\end{align*}
is minimal projective presentation of $M$ which shows $M\in\modd\cP$.
\par For the other direction, let $M$ have presentation by the modules in $\modd\cP$, i.e.
\begin{align}\label{presentation}
\xymatrixcolsep{5pt} 
\xymatrix{ P_1\ar[rrr]^{f} &&& P_0\ar[rrr] &&& M\ar[rrr]&&& 0.
}
\end{align}
Notice that $\soc\im f\cong\soc P_0\in\cS(\Lambda)$ and $\topp\im f\cong \topp P_1\in\cS'(\Lambda)$.  By lemma \ref{topsocConsecutive}, $\soc\coker f\in\cS(\Lambda)$. On the other hand $\topp\coker f\cong\topp P_0\in\cS'(\Lambda)$. After combining these, we conclude that $M\cong\coker f$ has $\cB(\Lambda)$-filtration due to proposition \ref{PROP filtered modules have unique filtration}.
\end{proof}

We see that $\modd\cP\cong Filt(\cB(\Lambda))$ and by the definition $\bm\varepsilon(\Lambda)$ is the endomorphism algebra of $\cP$.
 Therefore, we get the diagram
 
\begin{gather}
\begin{aligned}
\xymatrixcolsep{10pt}
\xymatrix{   \text{mod-}\Lambda\ar[rrr]^{\Hom(\cP,-)} &&&\text{mod-}\bm{\varepsilon}(\Lambda)\\
 Filt(\cB(\Lambda)) \ar[rrru]^{\cong} \ar@{^{(}->}[u]
}
\end{aligned}
\label{diagram categorical equivalence}
\end{gather}
where the restriction of $\Hom_{\Lambda}(\cP,-)$ onto $Filt(\cB(\Lambda))$ gives the equivalence of categories mod-$\bm{\varepsilon}(\Lambda)$ and  $Filt(\cB(\Lambda))$. As we will explain in \ref{subsechigher filtrations}, we did not take the opposite of the endomorphism algebra wittingly. Nevertheless we can describe modules over the syzygy filtered algebra in the following way. $\bm{\varepsilon}(\Lambda)=\End_{\Lambda}\cP$ is finite dimensional algebra and $\cP$ is a left $\bm{\varepsilon}(\Lambda)$-module. If $X$ is a right $\Lambda$-module, then $\Hom_{\Lambda}(\cP,X)$ is a right $\bm{\varepsilon}(\Lambda)$-module.
 The simple objects of $Filt(\cB(\Lambda))$ are the elements of the base set $\cB(\Lambda)$, therefore simple $\bm{\varepsilon}(\Lambda)$-modules are of the form 
$\Hom_{\Lambda}(\cP,\Delta_i)$ where $\Delta_i\in\cB(\Lambda)$. Projective $\bm\varepsilon(\Lambda)$-modules are of the form $\Hom_{\Lambda}(\cP,P)$ where $P$ is $\cB(\Lambda)$-filtered projective. In general, any $X\in\modd\bm\varepsilon(\Lambda)$ is isomorphic to $\Hom_{\Lambda}(\cP,X')$ for some $X'\in Filt(\cB(\Lambda))$.

\begin{remark}\label{Rem equivalnece of resolutions} The evaluation functor $\Hom_{\Lambda}(\cP,-)$ is exact, because $\cP$ is projective module. If $M\in Filt(\cB(\Lambda))$ have the projective resolution
\begin{align*}
\cdots\rightarrow P_2\rightarrow P_1\rightarrow P_0\rightarrow M\rightarrow 0
\end{align*}
then 
\begin{align*}
\cdots\rightarrow\Hom_{\Lambda}(\cP, P_2)\rightarrow\Hom_{\Lambda}(\cP, P_1)\rightarrow \Hom_{\Lambda}(\cP,P_0)\rightarrow \Hom_{\Lambda}(\cP,M)\rightarrow 0
\end{align*}
is the projective resolution of $X\in\modd\bm\varepsilon(\Lambda)$ where $X=\Hom_{\Lambda}(\cP,M)$. Each $\Hom_{\Lambda}(\cP,P_i)$, $i\geq 0$ are projective $\bm\varepsilon(\Lambda)$-modules.
Any nonsplit exact sequence 
\begin{align*}
0\rightarrow X_1\rightarrow X_2\rightarrow X_3\rightarrow 0
\end{align*}
in $\modd\bm\varepsilon(\Lambda)$ can be carried into $\modd\Lambda$ by the categorical equivalence, i.e.
\begin{align*}
0\rightarrow\Delta X_1\rightarrow\Delta X_2\rightarrow\Delta X_3\rightarrow 0
\end{align*}
is exact in $\modd\Lambda$ where each $X_i\cong\Hom_{\Lambda}(\cP,\Delta X_i)$.
By the abuse of terminology, we say $M\in\modd\Lambda$ is the \emph{corresponding module} to $X\in\modd\bm\varepsilon(\Lambda)$ if $\Hom_{\Lambda}(\cP,M)\cong X$. To make distinction, we use the following notation, for any $X\in\modd\bm\varepsilon(\Lambda)$, we denote the corresponding $\cB(\Lambda)$-filtered $\Lambda$-module by $\Delta X$ i.e. $\Hom_{\Lambda}(\cP,\Delta X)\cong X$. 
\end{remark}
 We exploit this idea when comparing various homological dimensions based on the projective dimensions of the modules.

\begin{proposition}\label{PROP carrying resolution on lambda to epsilon} If $M\in\modd\Lambda$ has finite projective dimension $d\geq 2$, then the projective dimension of $M'=\Hom_{\Lambda}(\cP,\Omega^2(M))$ in $\modd\bm\varepsilon(\Lambda)$ is $d-2$. If $M\in\modd\Lambda$ has infinite projective dimension, then the projective dimension of $M'=\Hom_{\Lambda}(\cP,\Omega^2(M))$ in $\modd\bm\varepsilon(\Lambda)$ is infinite. 
\end{proposition}
\begin{proof}
We consider the projective resolution of $M$ in $\modd\Lambda$
\begin{align*}
\cdots\rightarrow P_2\rightarrow P_1\rightarrow P_0\rightarrow M\rightarrow 0.
\end{align*}
By proposition \ref{PROP all higher syzygies are filtered} each $P_i$ and $\Omega^i(M)$ where $2\leq i\leq d$ have $\cB(\Lambda)$-filtration.
The evaluation functor $\Hom_{\Lambda}(\cP,-)$ is exact, therefore the projective resolution of $\Omega^2(M)$ in $\modd\Lambda$
\begin{align*}
\cdots\rightarrow P_3=P(\Omega^3(M))\rightarrow P_2=P(\Omega^2(M))\rightarrow \Omega^2(M)\rightarrow 0
\end{align*}
can be carried into $\modd\bm\varepsilon(\Lambda)$ as
\begin{align}\label{eqeq1}
\cdot\rightarrow \Hom_{\Lambda}(\cP,P(\Omega^3(M)))\rightarrow \Hom_{\Lambda}(\cP,P(\Omega^2(M)))\rightarrow \Hom_{\Lambda}(\cP,\Omega^2(M))\rightarrow 0
\end{align}
By the categorical equivalence \ref{diagram categorical equivalence}, 
 $\Hom_{\Lambda}(\cP,P(\Omega^i(M)))$ where $i\geq 2$ is projective $\bm\varepsilon(\Lambda)$-module. By the remark \ref{Rem equivalnece of resolutions}, \ref{eqeq1} is the projective resolution of $M'$ where $\Omega^2(M)$ is the corresponding module, i.e. $M'\cong\Hom_{\Lambda}(\cP,\Omega^2(M))$. We conclude that
\begin{align*}
\pdim_{\bm\varepsilon(\Lambda)}M'&=\pdim_{\bm\varepsilon(\Lambda)}\Hom_{\Lambda}(\cP,\Omega^2(M))\\
&=\pdim_{\Lambda}\Omega^2(M)\\
&=\pdim_{\Lambda}M-2\\
&=d-2.
\end{align*}
If $d$ is infinite, then $\Omega^2(M)$ has also infinite projective dimension. By \ref{eqeq1}, we get $\pdim_{\Lambda} M=\pdim_{\Lambda}\Omega^2(M)=\pdim_{\syz}\Hom_{\Lambda}(\cP,\Omega^2(M))=\infty$.
\end{proof}

 \begin{proposition}\label{PROP gldim reduction by 2} If $\Lambda$ is a cyclic Nakayama algebra with finite global dimension $d\geq 2$, then $\gldim\Lambda=\gldim\bm\varepsilon(\Lambda)+2$.
 \end{proposition}
\begin{proof}
Because $\gldim\Lambda\geq 2$, we can express it by the second syzygies, i.e.
\begin{align*}
\gldim\Lambda&=\sup\left\{\pdim M\vert\, M\in\modd\Lambda\right\}\\
&=\sup\left\{\pdim\Omega^2(M)\vert\, M\in\modd\Lambda\right\}+2.
\end{align*}
By proposition \ref{PROP pdim M>2 then all higher filtered}, $\Omega^2(M)$ is a $\cB(\Lambda)$-filtered module, therefore
\begin{align*}
\gldim\Lambda&=2+\sup\left\{\pdim \Delta X\vert\, \Delta X\in Filt(\cB(\Lambda)), \Delta X\cong\Omega^2(M)\in\modd\Lambda\right\}.
\end{align*}
By the categorical equivalence $\Hom_{\Lambda}(\cP,\Delta X)\cong X$, we get
\begin{align*}
\gldim\Lambda&=2+\sup\left\{\pdim_{\bm\varepsilon(\Lambda)} X\vert\,\Delta X\in Filt(\cB(\Lambda),\Hom_{\Lambda}(\cP,\Delta X)=X \right\}\\
&=2+\sup\left\{\pdim_{\bm\varepsilon(\Lambda)}X\vert\,  X\in\modd \bm\varepsilon(\Lambda))\right\}\\
&=2+\gldim\bm\varepsilon(\Lambda).
\end{align*}\end{proof}

It turns out that the syzygy filtered algebra is again a Nakayama algebra which allows us to make the mathematical induction on the various homological dimensions of the original algebra by exploiting the idea in the proof of proposition \ref{PROP gldim reduction by 2} (see proposition \ref{PROP carrying resolution on lambda to epsilon}). 

\begin{proposition}\label{PROP filtered algebra is Nakayama} If $\gldim\Lambda>2$, then  $\bm{\varepsilon}(\Lambda)$ is a Nakayama algebra of rank $r$ where $r$ is the number of irredundant relations defining $\Lambda$.
\end{proposition}

\begin{proof}
When global dimension of $\Lambda$ is two, by the reduction \ref{PROP gldim reduction by 2}, global dimension of $\bm\varepsilon(\Lambda)$ is zero.  In the last section, we show that it is a semisimple algebra (see proposition \ref{PROP gldim 2 implies filtered algebra is semisimple}). So we need to exclude this case.\par 
When the global dimension is greater than two, by the reduction \ref{PROP gldim reduction by 2}, we get $\gldim\bm\varepsilon(\Lambda)\geq 1$ which shows the syzygy filtered algebra is not semisimple. By the construction \ref{diagram categorical equivalence}, the category of $\bm\varepsilon(\Lambda)$-modules is equivalent to the category of $\cB(\Lambda)$-filtered modules. By proposition \ref{PROP filtered module category is equivalent to wide subcategory}, $Filt(\cB(\Lambda))$ is wide subcategory of $\modd\Lambda$. Since $\Lambda$ is Nakayama algebra, the indecomposable objects of $Filt(\cB(\Lambda))$ are uniserial. By proposition \ref{PROP filtered modules have unique filtration}, each $\cB(\Lambda)$-filtered module has a unique filtration, therefore $\bm\varepsilon(\Lambda)$ is a Nakayama algebra.\par
The number of simple objects of $Filt(\cB(\Lambda))$ is equal to the cardinality of the base set which is $r$. Because $\bm\varepsilon(\Lambda)$ is an endomorphism algebra of $r$-many $\cB(\Lambda)$-filtered projective modules, we obtain $\rank\bm\varepsilon(\Lambda)=r$.
\end{proof}

\begin{proposition}\label{PROP infinite global dimension, implies epsilon is cyclic}
If global dimension of $\Lambda$ is infinite, then
\begin{enumerate}[label=\roman*)]
\item global dimension of $\bm{\varepsilon}(\Lambda)$ is infinite,
\item $\bm\varepsilon(\Lambda)$ is cyclic.
\end{enumerate}
\end{proposition}
\begin{proof}
 There exists $\Lambda$-module $M$ with $p\dim M=\infty$ by the assumption, hence\\ $\pdim_{\bm\varepsilon(\Lambda)}\Hom_{\Lambda}(\cP,\Omega^2(M))$ is infinite by proposition \ref{PROP carrying resolution on lambda to epsilon}. So global dimension of the syzygy filtered algebra is unbounded. By proposition \ref{PROP filtered algebra is Nakayama}, it is a Nakayama algebra. Hence the underlying quiver has to be cyclic, because linear Nakayama algebras are representation-directed and have finite global dimension.
\end{proof}

We want to justify why our approach does not coincide with \cite{chen2012retractions} in general. If $\Lambda$ is not selfinjective, then passing from $\Lambda$ to $\bm\varepsilon(\Lambda)$ reduces  projective dimensions of modules  exactly by two. Also, by proposition \ref{PROP gldim reduction by 2}, our construction works for the case of finite global dimension.

\begin{remark}\label{Enakayama}  $\bm{\varepsilon}(\Lambda)$ is Nakayama algebra so in principle we can express its defining relations. They are managed by $\cB(\Lambda)$ in the following way. If $\Hom_{\Lambda}(\cP,P)$ is projective module which is minimal in $\modd\varepsilon(\Lambda)$, then it gives the relation 

\begin{align*}
\beta_{i_j+c_{i_j}-1}\circ\beta_{i_j+c_{i_j}-2}\circ\dots\circ\beta_{i_j+1}\circ\beta_{i_j}=0
\end{align*}
where $i_j$ is the index of the top of $\Hom_{\Lambda}(\cP,P)$ and $c_{i_j}$ is the length of $\Hom_{\Lambda}(\cP,P)$ in $\modd\bm\varepsilon(\Lambda)$ or equivalently by the remark \ref{REM delta lengths}, $c_{i_j}$ is the number of composition factors of $P$ with respect to $\cB(\Lambda)$-filtration because of the categorical equivalence \ref{diagram categorical equivalence}.
\end{remark}

\begin{remark}\cite[remark 3.4.1]{sen2018varphi} \label{Rem characterization of selfinjective algebras} Let $\rank\Lambda=N$. Then the following statements are equivalent:
\begin{enumerate}[label=\arabic*)]
\item $\Lambda$ is selfinjective.
\item All projective modules are injective.
\item All indecomposable projective modules have the same length.
\item\label{ccc2} Non-isomorphic projectives have non-isomorphic socle.
\item All radicals of indecomposable projective modules have the same length.
\item\label{ccc1} Every indecomposable projective $\Lambda$-module is minimal.
\item Every indecomposable projective module is projective-injective.
\item Each class of projectives (def \ref{defprojectiveclass of modules}) has exactly one object.
\end{enumerate}
\end{remark}

We prove the statement \ref{thmselfinjective} of Theorem \ref{bigthm1}.

\begin{proposition}\label{PROP algebra is selfinjective iff equivalent to filtered algebra} For any cyclic Nakayama algebra $\Lambda$,
$\Lambda\cong\bm{\varepsilon}(\Lambda)$ if and only if $\Lambda$ is selfinjective.
\end{proposition}
\begin{proof}
Assume that $\Lambda$ is selfinjective. Then we get
\begin{align*}
\bm{\varepsilon}(\Lambda):=\End\left(\bigoplus\limits_{S\in \cS'(\Lambda)}P(S)\right)=\End\left(\bigoplus\limits_{1\leq i\leq N} P_i\right)\cong \Lambda,
\end{align*}
since every projective $\Lambda$-module is minimal by the remark \ref{Rem characterization of selfinjective algebras} \ref{ccc1} which implies that the top set  $\cS'(\Lambda)$ contains all simple $\Lambda$-modules.\par
For the other direction, assume that $\Lambda\cong\bm{\varepsilon}(\Lambda)$. By the construction \ref{diagram categorical equivalence}, $Filt(\cB(\Lambda))\cong\modd\Lambda$. Hence the number of non-isomorphic simple $\Lambda$-modules has to be equal to the number of non-isomorphic $\cB(\Lambda)$-filtered simple modules. By the last statement \ref{item last in prop 2.18} of proposition \ref{PROP properties of the basesetproperties}, we conclude that $\cS(\Lambda)=\cS'(\Lambda)=\left\{S_1,S_2,\ldots,S_N\right\}$. Therefore, each non-isomorphic projective module have non-isomorphic socle. By the remark \ref{Rem characterization of selfinjective algebras} \ref{ccc2}, $\Lambda$ is selfinjective.
\end{proof}

Now we can add the equivalent statements below to the list given in the remark \ref{Rem characterization of selfinjective algebras}.
\begin{enumerate}[label=\arabic*)]
\item $\Lambda$ is selfinjective.
\item $\Lambda\cong\bm\varepsilon(\Lambda)$.
\item $\cS(\Lambda)=\cS'(\Lambda)=\cB(\Lambda)=\left\{S_1,S_2,\ldots,S_N\right\}$.
\item $r=N=\left\vert\cS'(\Lambda)\right\vert=\left\vert\cS(\Lambda)\right\vert=\left\vert\cB(\Lambda)\right\vert$.
\end{enumerate}

\subsection{Higher syzygy filtered algebras}\label{subsechigher filtrations}
Based on the construction of the syzygy filtered algebras \ref{deffilteredalg}, we can construct higher syzygy filtered algebras recursively.
\begin{definition} \label{defhigherfilteredalgebra} Let $\Lambda$ be a cyclic Nakayama algebra and $\bm{\varepsilon}(\Lambda)$ be its syzygy filtered algebra. \emph{ $n$-th syzygy filtered algebra} $\bm{\varepsilon}^n(\Lambda)$ is the endomorphism algebra of direct sum of projective covers of simple $\bm\varepsilon^{n-1}(\Lambda)$-modules which are elements of $\cS'(\bm\varepsilon^{n-1}(\Lambda))$,  i.e.
\begin{align*}
\bm{\varepsilon}^n(\Lambda):=\End_{\bm{\varepsilon}^{n-1}(\Lambda)}\left(\bigoplus\limits_{S\in\cS'(\bm{\varepsilon}^{n-1}(\Lambda))}P(S)\right)
\end{align*}
provided that $\bm{\varepsilon}^{n-1}(\Lambda)$ is a cyclic Nakayama algebra.
\end{definition}
We want to explain the reason behind the requirement that $\bm\varepsilon^{n-1}(\Lambda)$ is cyclic. To apply the construction in the definition \ref{deffilteredalg}, the socle, top and base sets of $\bm\varepsilon^{n-1}(\Lambda)$ have to be well-defined, which is true only for cyclic Nakayama algebras. The statements \ref{item3} and \ref{itemitem 5} of proposition \ref{PROP properties of the basesetproperties}  is not true for linear Nakayama algebras because projective-injective module $P_1$ cannot have $\cB(\Lambda)$-filtered submodule which should arise as the second syzgy of the simple projective module. Obviously it is not possible.
\par We can interpret $\modd\bm{\varepsilon}^n(\Lambda)$ in terms of $\cB(\bm{\varepsilon}^{n-1}(\Lambda))$-filtered $\bm{\varepsilon}^{n-1}(\Lambda)$ modules as we discussed in the subsection \ref{subsec filt} where $n=1$ and $\bm\varepsilon^0(\Lambda)=\Lambda$. Because of the cyclicity assumption on $\bm\varepsilon^{n-1}(\Lambda)$, all $\cS(\bm{\varepsilon}^{n-1}(\Lambda)),\cS'(\bm{\varepsilon}^{n-1}(\Lambda)),\cB(\bm{\varepsilon}^{n-1}(\Lambda)),Filt(\cB(\bm{\varepsilon}^{n-1}(\Lambda)))$ are well-defined and the results can be carried in this set up. In other words, the $n$-th syzygy filtered algebra $\bm\varepsilon^{n}(\Lambda)$ is the syzygy filtered algebra of the  cyclic Nakayama algebra $\bm\varepsilon^{n-1}(\Lambda)$, that is
\begin{align}\label{cons recursion}
\bm\varepsilon^{n}(\Lambda)\cong\bm\varepsilon\left(\bm\varepsilon^{n-1}(\Lambda)\right).
\end{align}

If we denote projective modules in $Filt\cB(\bm{\varepsilon}^{n-1}(\Lambda))$ by $\cP_{\bm\varepsilon^{n-1}(\Lambda)}$, i.e. 
\begin{align*}
\cP_{\bm\varepsilon^{n-1}(\Lambda)}=\bigoplus\limits_{S\in\cS'(\bm{\varepsilon}^{n-1}(\Lambda))} P(S),
\end{align*}
then we see that $\modd\cP_{\bm\varepsilon^{n-1}(\Lambda)}\cong Filt(\cB(\bm\varepsilon^{n-1}(\Lambda)))$ by proposition  \ref{PROP filtered cat is mod P}, and by the definition of  syzygy filtered algebra, $\bm\varepsilon^n(\Lambda)$ is the endomorphism algebra of $\cP_{\bm\varepsilon^{n-1}(\Lambda)}$.
 Therefore, we get the diagram
\begin{gather}
\begin{aligned}
\xymatrixcolsep{10pt}
\xymatrix{ \text{mod-}\bm{\varepsilon}^{n-1}(\Lambda)\ar[rrrrr]^{\Hom(\cP_{\bm\varepsilon^{n-1}(\Lambda)},-)} &&&&&\text{mod-}\bm{\varepsilon}^n(\Lambda)\\
 Filt(\cB(\bm{\varepsilon}^{n-1}(\Lambda))) \ar[rrrrru]^{\cong} \ar@{^{(}->}[u]
}
\label{diagram for higher filtration}
\end{aligned}
\end{gather}

The restriction of $\Hom_{\bm{\varepsilon}^{n-1}(\Lambda)}(\cP_{\bm\varepsilon^{n-1}(\Lambda)},-)$ onto $Filt(\cB(\bm{\varepsilon}^{n-1}(\Lambda)))$ gives the equivalence of categories mod-$\bm{\varepsilon}^{n}(\Lambda)$ and  $Filt(\cB(\bm{\varepsilon}^{n-1}(\Lambda)))$. Because of the recursive nature of the construction (see \ref{cons recursion}) we do not want to take the opposite algebra back and forth. We can view all the modules as right modules. We can describe modules over the higher syzygy filtered algebras in the following way. $\bm{\varepsilon}^n(\Lambda)=\End_{\bm{\varepsilon}^{n-1}(\Lambda)}\cP_{\bm\varepsilon^{n-1}(\Lambda)}$ is finite dimensional algebra and $\cP_{\bm\varepsilon^{n-1}(\Lambda)}$ is left $\bm{\varepsilon}^n(\Lambda)$-module. If $X$ is right $\bm{\varepsilon}^{n-1}(\Lambda)$-module, then $\Hom_{\bm{\varepsilon}^{n-1}(\Lambda)}(\cP_{\bm\varepsilon^{n-1}(\Lambda)},X)$ is right $\bm{\varepsilon}^n(\Lambda)$-module. Briefly we have:
\begin{enumerate}[label=\roman*)]
\item The simple objects of $Filt(\cB(\bm{\varepsilon}^{n-1}(\Lambda)))$ are the elements of the base set $\cB(\bm{\varepsilon}^{n-1}(\Lambda))$, therefore simple $\bm{\varepsilon}^n(\Lambda)$-modules are of the form 
$\Hom_{\bm{\varepsilon}^{n-1}(\Lambda)}(\cP_{\bm\varepsilon^{n-1}(\Lambda)},\Delta_i)$ where $\Delta_i\in\cB(\bm{\varepsilon}^{n-1}(\Lambda))$. 
\item Projective $\bm\varepsilon^n(\Lambda)$-modules are of the form $\Hom_{\bm{\varepsilon}^{n-1}(\Lambda)}(\cP_{\bm\varepsilon^{n-1}(\Lambda)},P)$ where $P$ is $\cB(\bm{\varepsilon}^{n-1}(\Lambda))$-filtered projective. 
\item In general, any $X\in\modd\bm{\varepsilon}^{n-1}(\Lambda)$ is isomorphic to $\Hom_{\bm{\varepsilon}^{n-1}(\Lambda)}(\cP_{\bm\varepsilon^{n-1}(\Lambda)},X')$ for some $X'\in Filt(\cB(\bm{\varepsilon}^{n-1}(\Lambda)))$.
\end{enumerate}

\subsection{Relationship between $\Lambda$ and $\varepsilon^n(\Lambda)$}
 One can construct the higher syzygy filtered algebras directly from the original algebra. Just to illustrate the idea, we focus on the case $n=2$. Let $\Lambda$ and $\bm\varepsilon(\Lambda)$ be cyclic Nakayama algebras (see \ref{PROP infinite global dimension, implies epsilon is cyclic}, \ref{PROP pdim simple 4 implies cyclic then linear} when $\syz$ is cyclic). Consider the following diagram:

\begin{gather}
\begin{aligned}
\xymatrixcolsep{10pt}
\xymatrix{   \text{mod-}\Lambda\ar[rrr]^{\Hom(\cP_{\Lambda},-)} &&&\text{mod-}\bm{\varepsilon}(\Lambda)\ar[rrr]^{\Hom(\cP_{\bm\varepsilon(\Lambda)},-)} &&&\text{mod-}\bm{\varepsilon}^2(\Lambda)\\
 Filt(\cB(\Lambda))\ar@{^{(}->}[u] \ar[rrru]^{\cong} &&&Filt(\cB(\bm\varepsilon(\Lambda))) \ar[rrru]^{\cong} \ar@{^{(}->}[u]
}
\end{aligned}\label{seconddiagram}
\end{gather}
\begin{proposition}
The composition $\Hom_{\syz}(\cP_{\syz},-)\circ\ho{-}:\modd\Lambda\rightarrow \modd\bm\varepsilon^2(\Lambda)$
 is equivalent to
 \begin{align}\label{functor 2}
 \Hom_{\Lambda}(\cP',-):\modd\Lambda\rightarrow \modd\bm\varepsilon^2(\Lambda)
 \end{align}
   where
   \begin{align}
  \cP'=\bigoplus\limits_{S\in\cS'(\Lambda)}P(\Omega^2(S))=\bigoplus\limits_{_{\substack{S\cong\topp P\\ P\in\modd\Lambda,\,\text{minimal}}}} P(\Omega^4(S))
  \end{align}
   and $S$ is top of minimal projective $\Lambda$-module in the right hand side. 
 \end{proposition}
 \begin{proof}
  We proved that $Filt(\cB(\Lambda))$ is equivalent to $\modd\bm\varepsilon(\Lambda)$ via the evaluation functor $\Hom_{\Lambda}(\cP_{\Lambda},-)$ and by the same reason $ Filt(\cB(\bm\varepsilon(\Lambda)))$ is equivalent to $\modd\bm\varepsilon^2(\Lambda)$ via the evaluation functor $\Hom_{\bm\varepsilon(\Lambda)}(\cP_{\bm\varepsilon(\Lambda)},-)$. We claim that any element of the base set $\cB(\syz)$ can be realized as the fourth syzygies of simple $\Lambda$-modules which can be interpreted as the generalization of proposition \ref{PROP x is an element of the base set iff it is second syzygy}.

 $M\in\cB(\bm\varepsilon(\Lambda))$ if and only if $M\cong\Omega^2(S')$ in $\modd\bm\varepsilon(\Lambda)$ and $S'$ is top of minimal projective module $P$ by propositions \ref{PROP x in base set iff x is omegaone of radical} and \ref{PROP x is an element of the base set iff it is second syzygy}. Since $M\cong\Omega^2(S')$, we have the exact sequences
\begin{gather*}
0\rightarrow M\rightarrow P(\Omega^1(S'))\rightarrow \Omega^1(S')\rightarrow 0\\
0\rightarrow\Omega^1(S')\rightarrow P(S')\rightarrow S'\rightarrow 0
\end{gather*}
in $\modd\bm\varepsilon(\Lambda)$. By the categorical equivalence between $Filt(\cB(\Lambda))$ and $\modd\bm\varepsilon(\Lambda)$ via the evaluation functor, the following sequences are exact in $\modd\Lambda$
\begin{gather*}
0\rightarrow \Delta M\rightarrow\Delta P(\Omega^1(S'))\rightarrow \Delta\Omega^1(S')\rightarrow 0\\
0\rightarrow\Delta\Omega^1(S')\rightarrow\Delta P(S')\rightarrow\Delta S'\rightarrow 0
\end{gather*}
where $\Hom_{\Lambda}(\cP,\Delta M)\cong M,\Hom_{\Lambda}(\cP,\Delta S')\cong S',\Hom_{\Lambda}(\cP,\Delta \Omega^1(S'))\cong \Omega^1(S'),\\ \Hom_{\Lambda}(\cP,\Delta P(\Omega^1(S')))\cong P(\Omega^1(S'))$ and $ \Hom_{\Lambda}(\cP,\Delta P(S'))\cong P(S')$.\\

$S'$ is a simple $\bm\varepsilon(\Lambda)$-module, so the corresponding module $\Delta S'\in\modd\Lambda$ is an element of the base set $\cB(\Lambda)$ by the categorical equivalence \ref{diagram categorical equivalence} and the remark \ref{Rem equivalnece of resolutions}. On the other hand, there exists a simple $\Lambda$-module $S$ which is a top of a minimal projective $\Lambda$-module such that $\Omega^2(S)\cong \Delta S'$ by the same argument. As a result $\Omega^4(S)\cong\Omega^2(\Delta S')\cong \Delta M$ in $\modd\Lambda$. By the functor $\Hom_{\Lambda}(\cP',-)$ introduced in \ref{functor 2}, simple $\bm\varepsilon^2(\Lambda)$-modules are of the form
\begin{align}\label{functor 3}
\Hom_{\Lambda}(\cP',\Omega^4(S))=\Hom_{\Lambda}(\cP',\Delta M).
\end{align}

Let $Filt(\cB^2(\Lambda))$ denote the category of $\Lambda$-modules which are filtered by the fourth syzygies of simple $\Lambda$-modules. 
By the definition \ref{defhigherfilteredalgebra} we have 
\begin{align*}
\bm\varepsilon^2(\Lambda)&=\End_{\bm\varepsilon(\Lambda)}\left(\bigoplus\limits_{S\in\cS'(\bm\varepsilon(\Lambda))} P(S)\right)
\end{align*} which is equivalent to
\begin{align*}
\bm\varepsilon^2(\Lambda)&=\End_{\bm\varepsilon(\Lambda)}\left(\bigoplus\limits_{M\in\cB(\bm\varepsilon(\Lambda))} P(M) \right)
\end{align*}
where $ \topp M\in\cS'(\bm\varepsilon(\Lambda))$.
By the remark \ref{Rem equivalnece of resolutions} we can lift $\modd\syz$ to $\modd\Lambda$, i.e.
\begin{align*}
\bm\varepsilon^2(\Lambda)&=\End_{\Lambda}\left(\bigoplus\limits_{\substack{\Delta M\in\modd\Lambda\\\Hom_{\Lambda}(\cP,\Delta M)\cong M}} P(\Delta M) \right)
\end{align*}
where  $\Hom_{\Lambda}(\cP,\Delta M)\cong M \in\cB(\bm\varepsilon(\Lambda))$.
By the result \ref{functor 3}, any $\Delta M$ is the fourth syzygy of simple $\Lambda$-module, so
\begin{align*}
\bm\varepsilon^2(\Lambda)&=\End_{\Lambda}\left(\bigoplus\limits_{\substack{S\in\modd\Lambda\\\Omega^4(S)\cong\Delta M}} P(\Omega^4(S))\right)
\end{align*}

By the definition of $\cB^2(\Lambda)$-filtered modules, we conclude that

\begin{align*}
\bm\varepsilon^2(\Lambda)&=\End_{\Lambda}\left(\bigoplus\limits_{\substack{N\in\cB^2(\Lambda)\\}} P(N) \right).
\end{align*}
\end{proof}
Therefore the diagram \ref{seconddiagram} can be extended into

\[\xymatrixcolsep{10pt}
\xymatrix{   \text{mod-}\Lambda\ar@/^2.5pc/[rrrrrr]^{\Hom_{\Lambda}(\cP',-)}\ar[rrr]^{\Hom(\cP_{\Lambda},-)} &&&\text{mod-}\bm{\varepsilon}(\Lambda)\ar[rrr]^{\Hom(\cP_{\bm\varepsilon(\Lambda)},-)} &&&\text{mod-}\bm{\varepsilon}^2(\Lambda)\\
 Filt(\cB(\Lambda))\ar@{^{(}->}[u] \ar[rrru]^{\cong} &&&Filt(\cB(\bm\varepsilon(\Lambda))) \ar[rrru]^{\cong} \ar@{^{(}->}[u]\\
 Filt(\cB^2(\Lambda))\ar@{^{(}->}[u]\ar@/_2.5pc/@[black][rrrrrruu]_{\cong}
}\]

where $Filt(\cB^2(\Lambda))\cong\modd\bm\varepsilon^2(\Lambda)$ via the functor $\Hom_{\Lambda}(\cP',-)$.

\subsection{Examples}
We want to give examples to make the concepts we have introduced so far concrete.

\begin{example} Let $\Lambda$ be cyclic Nakayama algebra with $\rank\Lambda=5$ given by the relations
\begin{align*}
\alpha_{5}\alpha_{4}\alpha_{3}\alpha_{2}\alpha_{1}=0\\
\alpha_{1}\alpha_{5}\alpha_{4}\alpha_{3}\alpha_{2}=0\\
\alpha_{3}\alpha_{2}\alpha_{1}\alpha_{5}\alpha_{4}\alpha_{3}=0\\
\alpha_{4}\alpha_{3}\alpha_{2}\alpha_{1}\alpha_{5}\alpha_{4}=0.
\end{align*}
The corresponding Kupisch series are $(5,5,6,6,6)$. All projective modules are minimal except $P_5$. The socle set is $\left\{S_1,S_3,S_4,S_5\right\}$, the top set is $\left\{S_1,S_2,S_4,S_5\right\}$. The base set is 
\begin{align*}
\cB(\Lambda)=\left\{S_1,\begin{matrix}
S_2\\S_3
\end{matrix},S_4,S_5 \right\}.
\end{align*}
$P_1$, $P_2$, $P_4$ and $P_5$ are $\cB(\Lambda)$-filtered. Therefore the Kupisch series of $\bm\varepsilon(\Lambda)$ is $(4,4,5,5)$ where the simple modules are $\Delta_1=\Hom(\cP,S_1)$, $\Delta_2=\Hom(\cP,\begin{matrix}
S_2\\S_3
\end{matrix})$,$\Delta_3=\Hom(\cP,S_4)$, $\Delta_4=\Hom(\cP,S_5)$ and the rank is $4$. We can apply the construction again. Algebra with Kupisch series $(4,4,5,5)$ have the relations
\begin{align*}
\alpha_4\alpha_3\alpha_2\alpha_1=0\\
\alpha_1\alpha_4\alpha_3\alpha_2=0\\
\alpha_3\alpha_2\alpha_1\alpha_4\alpha_3=0
\end{align*}
The corresponding base set is $\left\{\Delta_1,\begin{matrix}
\Delta_2\\\Delta_3
\end{matrix},\Delta_4 \right\}$. The filtered projective modules are $P_1,P_2$ and $P_4$. Therefore the Kupisch series of $\bm\varepsilon^2(\Lambda)$ is $(3,3,4)$ where the simple modules are of the form
\begin{align*}
\cE_1=\Hom(\cP,\Delta_1), \cE_2=\Hom(\cP,\begin{matrix}
\Delta_2\\\Delta_3\end{matrix} ), \cE_3=\Hom(\cP,\Delta_4).
\end{align*}
 The relations are
\begin{align*}
\alpha_3\alpha_2\alpha_1=0\\
\alpha_1\alpha_3\alpha_2=0
\end{align*} so the base set is $\left\{\cE_1,\begin{matrix}
\cE_2\\\cE_3
\end{matrix} \right\}$. The Kupisch series of $\bm\varepsilon^3(\Lambda)$ is $(2,2)$ which is selfinjective.\\
We want to illustrate the characterization of selfinjective algebras \ref{Rem characterization of selfinjective algebras} here, the relations of selfinjective algebra given by $(2,2)$ is $\alpha_2\alpha_1=0$ and $\alpha_1\alpha_2=0$. The base set becomes $\left\{S_1,S_2\right\}$ which is the simple modules of $\bm\varepsilon^3(\Lambda)$. Therefore all projective modules have $\cB(\Lambda)$-filtration and $\bm\varepsilon^3(\Lambda)\cong\bm\varepsilon^4(\Lambda)$.
\end{example}

\begin{example} We point out that our method is different  than the one given in \cite{chen2012retractions}. Consider the algebra with Kupisch series $(4,3,3,4,3,3)$. The relations are
\begin{align*}
\alpha_4\alpha_3\alpha_2=0\\
\alpha_5\alpha_4\alpha_3=0\\
\alpha_1\alpha_6\alpha_5=0\\
\alpha_2\alpha_1\alpha_6=0
\end{align*}
The base set is 
\begin{align*}
\left\{S_5,\begin{matrix}
S_6\\S_1
\end{matrix},S_2,\begin{matrix}
S_3\\S_4
\end{matrix} \right\}.
\end{align*}
Therefore $P_2,P_3.P_5,P_6$ have $\cB(\Lambda)$-filtration, and the Kupisch series of $\bm\varepsilon(\Lambda)$ is $(2,2,2,2)$ which we get in the first step as opposed to the one in \cite{chen2012retractions}.
\end{example}

\begin{example} Let $N=5$ and the relations are  $\alpha_3\alpha_2=0, \alpha_1\alpha_5=0$. This determines minimal projectives directly which are $P_2=\begin{vmatrix}
2\\3
\end{vmatrix}$ and $P_5=\begin{vmatrix}
5\\1
\end{vmatrix}$. $P_3$ and $P_4$ contain $P_5$ as a submodule, and $P_1$ contain $P_2$ as a submodule. The complete list of representatives of projective modules is
\begin{align*}
P_1=\begin{vmatrix}
1\\2\\3
\end{vmatrix}, P_2=\begin{vmatrix}
2\\3
\end{vmatrix}, P_3=\begin{vmatrix}
3\\4\\5\\1
\end{vmatrix},P_4=\begin{vmatrix}
4\\5\\1
\end{vmatrix}, P_5=\begin{vmatrix}
5\\1
\end{vmatrix}
\end{align*}.
The corresponding Kupisch series is $(3,2,4,3,2)$. Notice that there are two different socles of projective modules $S_3$ and $S_1$. The base set is $\left\{P_2, P_4\right\}$, therefore $\syz$ is $\Aa_1\oplus\Aa_1$.
\end{example}

\begin{example} We want to give examples with finite global dimension. Consider algebra given by $(4,3,2,4,3,2,4,3,2)$. The relations are $\alpha_4\alpha_3=0,\alpha_7\alpha_6=0,\alpha_1\alpha_9=0$. The base set is $\left\{P_5,P_8,P_2\right\}$ , hence $\bm\varepsilon(\Lambda)$ is semisimple algebra $\Aa_1\oplus\Aa_1\oplus\Aa_1$.
\end{example}

We view hereditary algebra of type $\Aa_n$ as linear Nakayama algebra, given by the single relation $\alpha_n=0$.

\begin{example} Let $\Lambda$ be given by Kupisch series $(3,3,3,4,3,3,3,3,2)$. Relations are
\begin{gather*}
\alpha_3\alpha_2\alpha_1=0\\
\alpha_4\alpha_3\alpha_2=0\\
\alpha_5\alpha_4\alpha_3=0\\
\alpha_7\alpha_6\alpha_5=0\\
\alpha_8\alpha_7\alpha_6=0\\
\alpha_9\alpha_8\alpha_1=0\\
\alpha_1\alpha_9=0
\end{gather*}
The base set is
\begin{align*}
\left\{S_1,\begin{matrix}
S_2\\S_3
\end{matrix},S_4,S_5,\begin{matrix}
S_6\\S_7
\end{matrix},S_8,S_9\right\}.
\end{align*}
The syzygy filtered algebra is given by Kupisch series $(2,2,3,2,2,3,2)$. If we apply the construction once more, $\bm\varepsilon^2(\Lambda)$ is isomorphic to linear Nakayama algebra $(2,2,1)\oplus \Aa_2$. 
\end{example}

\section{Applications of Syzygy Filtered Algebras}\label{section2}
In each subsection, we will define a homological dimension and prove Theorem \ref{bigthm1} in parts. We start with $\varphi$-dimension.

\subsection{Results about $\varphi$-dimension}
Let $A$ be an artin algebra.
Let $K_0$ be the abelian group generated by all symbols $\left[X\right]$, where $X$ is a finitely generated $A$-module, modulo the relations
\begin{enumerate}[label=\roman*)]
\item $\left[A_1\right]=\left[A_2\right]+\left[A_3\right]$ if $A_1\cong A_2\oplus A_3$
\item $\left[P\right]=0$ if $P$ is projective.
\end{enumerate}
Then $K_0$ is the free abelian group generated by the isomorphism classes of indecomposable finitely generated non-projective $A$-modules. For any finitely generated $A$-module $M$, let $L\left[M\right]:=\left[\Omega M\right]$ where $\Omega M$ is the first syzygy of $M$. Since $\Omega$ commutes with direct sums and takes projective modules to zero this gives a homomorphism $L : K_0 \rightarrow K_0$. For every finitely generated $A$-module $M$, let $\left\langle \add M\right\rangle$ denote the subgroup of $K_0$ generated by additive generator of $M$, i.e. all the indecomposable summands of $M$. $\langle\add M\rangle$ is a free abelian group because it is a subgroup of free abelian group $K_0$.
In other words, if $M=\oplus_{i=1}^m{M_i}^{n_i}$ then $\left\langle \add M\right\rangle=\left\langle \{[M_i]\}_{i=1}^m\right\rangle$ and $L^t(\left\langle \add M\right\rangle)= \left\langle \{[\Omega^t M_i]\}_{i=1}^m\right\rangle$.
\begin{definition}\label{defvarphimodule}
For a given module $M$ in $\modd A$, $\varphi$-dimension of $M$ is
\begin{align*}
\varphi(M):=\min\left\{t\,\vert\rank \left(L^t\langle \add M\rangle\right)=\rank\left(L^{t+j}\langle \add M\rangle\right)\text{ for}\, j\geq 1\right\}.
\end{align*}
\end{definition}

Notice that $\varphi(M)$ is finite for each module $M$, since the rank has to become stable at some step. For example, if projective dimension of $M$ is finite, then $\varphi(M)=p\dim(M)$.

\begin{definition}\label{defvarphidimension} If $A$ is an artin algebra, then $\varphi$-dimension of $A$ is
$$\varphi\dim A:=\sup\{\varphi(M)\ |\ M \in \modd \text{A}\}.$$ 
\end{definition}

We collect some statements about $\varphi$-dimension, proofs and other properties can be found in \cite{sen2018varphi}. 

\begin{remark}\label{rem findim smaller than fidim }
\begin{enumerate}[label=\roman*)]
\item  Let $A$ be an artin algebra of finite representation type. Let $\{M_1,\dots,M_m\}$ be a complete set of representatives of isomorphism classes of indecomposable $\Lambda$-modules. Then $\varphi \dim(A)=\varphi(\oplus_{i=1}^mM_i)$. 
\item\label{rem part2ccc} If global dimension of algebra $A$ is finite, then $\gldim A=\varphi\dim A$. Recall that $\findim A= \sup\{\pdim X\ |\ \pdim X< \infty, X\in\text{mod-}A\}$, which implies $\findim A\leq \varphi\dim A$.
\end{enumerate}
\end{remark}

We recall the result about small values of $\varphi$-dimension as stated in \cite{sen2018varphi}.

\begin{theorem}\label{Sen smallvalues} \cite[Thm 3.5.1]{sen2018varphi} Let $\Lambda$ be a cyclic Nakayama algebra of infinite global dimension. Then, 
\begin{enumerate}[label=\arabic*)]
\item $\varphi\dim\Lambda=0$ if and only if $\Lambda$ is selfinjective algebra.
\item $\varphi\dim\Lambda\neq 1$.
\item $\varphi\dim\Lambda=2$ if and only if each element of the base set $\cB(\Lambda)$ are periodic modules,  i.e. $\Delta_i\cong\Omega^j(\Delta_i)$ for any $1\leq i\leq r$.
\end{enumerate}
\end{theorem} 

Now, we want to translate it into the syzygy filtered algebra set up. By proposition \ref{PROP algebra is selfinjective iff equivalent to filtered algebra}, we can replace the first statement by "$\varphi\dim\Lambda=0$ if and only if  $\Lambda\cong\bm\varepsilon(\Lambda)$." For the third statement, we can apply the construction \ref{diagram categorical equivalence} and conclude that any simple $\bm\varepsilon(\Lambda)$-module  $S_i\cong\Hom_{\Lambda}(\cP,\Delta_i)$ is a periodic module. Therefore for any simple $\bm\varepsilon(\Lambda)$-module, $P(S)$ is minimal projective, otherwise $\rad P(S)$ would be a projective module which is not periodic module. By the characterization \ref{Rem characterization of selfinjective algebras}, we conclude that $\bm\varepsilon(\Lambda)$ is selfinjective. We restate the result in the new terminology.

\begin{theorem}\label{THM restated Sen smallvalues} Let $\Lambda$ be a cyclic Nakayama algebra of infinite global dimension. Then, 
\begin{enumerate}[label=\arabic*)]
\item $\varphi\dim\Lambda=0$ if and only if $\Lambda$ is selfinjective algebra if and only if $\Lambda\cong\bm\varepsilon(\Lambda)$.
\item\label{part fidim of cyclic cannot be one} $\varphi\dim\Lambda\neq 1$.
\item $\varphi\dim\Lambda=2$ if and only if $\bm{\varepsilon}(\Lambda)$ is selfinjective and $\Lambda$ is not.
\end{enumerate}

\end{theorem}

Here we give  some results on Nakayama algebras by using the syzygy filtered algebras. First we give the proof of the statement \ref{thmfidimreduction} of Theorem \ref{bigthm1}.

\begin{theorem}\label{thm reductionvar}
If $\Lambda$ is a cyclic Nakayama algebra, then $\varphi\dim\Lambda=\varphi\dim\bm{\varepsilon}(\Lambda)+2$ provided that $\varphi\dim\Lambda\geq 2$ and $\gldim\Lambda=\infty$.
\end{theorem}
\begin{proof}
For any indecomposable $\bm\varepsilon(\Lambda)$-module $X$, there exists $\cB(\Lambda)$-filtered $\Lambda$-module $\Delta X$ such that $\Hom_{\Lambda}(\cP,\Delta X)=X$ by the remark \ref{Rem equivalnece of resolutions}. By the categorical equivalence \ref{diagram categorical equivalence} we get
\begin{align*}
\varphi_{\bm\varepsilon(\Lambda)}(X)=\varphi_{\Lambda}(\Delta X).
\end{align*}
By proposition \ref{PROP X has filtration then it is second syzygy}, for any non-projective $\Delta X$ there exists $M\in\modd\Lambda$ such that $\Omega^2(M)\cong\Delta X$. Therefore 
\begin{align*}
\varphi(M)=2+\varphi(\Omega^2(M))=2+\varphi(\Delta X)=2+\varphi_{\bm\varepsilon(\Lambda)}(X).
\end{align*} 
We can take the supremum,
\begin{align*}
\varphi\dim\Lambda&=\sup\left\{\varphi(M)\vert\,M\in\modd\Lambda\right\}\\
&=\sup\left\{\varphi(\Omega^2(M))\vert\,M\in\modd\Lambda\right\}+2\\
&=\sup\left\{\varphi(\Delta M)\vert\,\Delta M\in Filt(\cB(\Lambda))\right\}+2\\
&=\sup\left\{\varphi(M)\vert\, M\in\modd\bm\varepsilon(\Lambda)\right\}+2\\
&=\varphi\dim\bm\varepsilon(\Lambda)+2
\end{align*}
which proves the statement.
\end{proof}
\begin{corollary}\label{cor fidim=2d + sth} If $\Lambda$ is a cyclic Nakayama algebra, then 
\begin{align*}
\varphi\dim\Lambda=2d+\varphi\dim\bm\varepsilon^d(\Lambda)
\end{align*}
for some $d\geq 1$, provided that $\gldim\Lambda=\infty$ and $\varphi\dim\bm\varepsilon^{d-1}(\Lambda)\geq 2$.
\end{corollary}
\begin{proof}
By proposition \ref{PROP filtered algebra is Nakayama} each syzygy filtered algebra is Nakayama algebra. Moreover, by Theorem \ref{thm reductionvar}, each of them have infinite global dimension hence cyclic. Therefore we can apply Theorem \ref{thm reductionvar}  iteratively to get reduction two in each step, i.e.
\begin{align*}
\varphi\dim\Lambda &=2+\varphi\dim\bm\varepsilon(\Lambda)\\
&=4+\varphi\dim\bm\varepsilon^2(\Lambda)\\
&\vdots\\
&=2(d-1)+\varphi\dim\bm\varepsilon^{d-1}(\Lambda)
\end{align*}
By the assumption $\varphi\dim\bm\varepsilon^{d-1}(\Lambda)\geq 2$, we can make one more reduction to get
\begin{align*}
\varphi\dim\Lambda=2d+\varphi\dim\bm\varepsilon^{d}(\Lambda).
\end{align*}
\end{proof}

Now, we give the proof of Theorem \ref{bigthm2} part \ref{thmlambdastable}.
\begin{theorem}\label{THM fidim becomes stable } If $\Lambda$ is a cyclic Nakayama algebra of infinite global dimension, then there exist $d$ such that $\varphi\dim\bm\varepsilon^{d-1}\Lambda=2$ and $\bm\varepsilon^d(\Lambda)$ is selfinjective.
\end{theorem}
\begin{proof}
By the corollary \ref{cor fidim=2d + sth}, we can apply the syzygy filtered algebra construction upto positive integer $d$ such that $0\leq \varphi\dim\bm\varepsilon^{d}(\Lambda)\leq 2$, and $\varphi\dim\bm\varepsilon^{d-1}(\Lambda)=2+\varphi\dim\bm\varepsilon^d(\Lambda)\geq 2$. Therefore 
\begin{align*}
0\leq \varphi\dim\Lambda-2d=\varphi\dim\bm\varepsilon^d(\Lambda)\leq 2.
\end{align*} 
We have there cases to examine.
\begin{enumerate}[label=Case \roman*)]
\item $\varphi\dim\Lambda=2d$, which makes $\varphi\dim\bm\varepsilon^d(\Lambda)=0$. By the characterization  \ref{Rem characterization of selfinjective algebras}, $\bm\varepsilon^{d}(\Lambda)$ is selfinjective algebra and $\varphi\dim\bm\varepsilon^{d-1}(\Lambda)=2$.
\item $\varphi\dim\Lambda=2d+2$, which makes $\varphi\dim\bm\varepsilon^d(\Lambda)=2$. We can apply Theorem \ref{thm reductionvar} to get $\varphi\dim\bm\varepsilon^{d+1}(\Lambda)=0$, so it is selfinjective algebra by the remark \ref{Rem characterization of selfinjective algebras}.
\item By the result \ref{THM restated Sen smallvalues} \ref{part fidim of cyclic cannot be one}, $\varphi$-dimension of cyclic Nakayama algebra cannot be one.
\end{enumerate}
\end{proof}

The following result was proved in \cite{sen2018varphi}. We state it in terms of the syzygy filtered algebras and give a different proof based on the syzygy filtered algebra construction.

\begin{theorem}\cite[Thm 5.1.1]{sen2018varphi} \label{Sen evenfidim}
Assume that global dimension of cyclic Nakayama algebra $\Lambda$ is infinite. Then $\varphi$-dimension of $\Lambda$ is even. Furthermore, the upper bound of $\varphi\dim\Lambda$ is $2r$, where $r=\vert\cB(\Lambda)\vert$.
\end{theorem}
\begin{proof}
By Theorem \ref{thm reductionvar}, in each step $\varphi$-dimension reduces by two. By Theorem \ref{THM fidim becomes stable }, there exist $d$ such that $\varphi\dim\bm\varepsilon^d(\Lambda)=2$, which makes $\varphi\dim\Lambda=2d+2$, an even number.\par
To prove the second part, observe that number of simple modules of $\bm{\varepsilon}^i(\Lambda)$ is given  by $\vert \cB(\bm{\varepsilon}^{i-1}(\Lambda))\vert$ which has the same cardinality with the irredundant system of relations defining $\bm\varepsilon^{i-1}(\Lambda)$. If we set $\varphi\dim\Lambda$ to $2d$, then by Theorem \ref{THM fidim becomes stable } $\bm\varepsilon^{d}(\Lambda)$ is selfinjective and $\varphi\dim\bm\varepsilon^{d-1}(\Lambda)=2$. Let $r_i$ denote the rank of $\bm\varepsilon^{i+1}(\Lambda)$ for $i\geq 1$. Therefore $\rank\bm\varepsilon^{d}(\Lambda)=r_{d-1}$ and each rank satisfies $r_{i+1}\leq r_i-1$. If we add the inequalities we get $r_{d-1}\leq r-(d-1)$. The minimal possible value of $r_{d-1}$ is one, therefore $d\leq r$, which implies $\varphi\dim\Lambda=2d\leq 2r$. In particular, $r\leq N-1$, in terms of the rank of $\Lambda$, the upper bound is $2N-2$.
\end{proof}


\subsection{Results about finitistic dimension}

First we state and prove results regarding possible small values of the finitistic dimension and $\varphi$-dimension. Then we state the reduction method for the finitistic dimension.
We recall definition of the finitistic dimension of artin algebra A, \begin{align*}
\findim A:=\sup\left\{\pdim M\vert\,M\,\text{is an} \,A\text{-module with}\,\,\pdim M<\infty \right\}.
\end{align*}

We start with the following observation.

\begin{lemma}\label{LEM fidim 2 implies findim is 1 or 2 }
If $\varphi\dim\Lambda=2$, then $\varphi\dim\Lambda-\findim\Lambda\leq 1$.
\end{lemma}
\begin{proof}
Since $\varphi\dim\Lambda=2$, by the remark \ref{Rem characterization of selfinjective algebras}  $\Lambda$ is not selfinjective and there exists a projective module $P$ which is not minimal. Therefore, the exact sequence
\begin{align*}
0\rightarrow \rad P\rightarrow P\rightarrow \topp P\rightarrow 0
\end{align*}
implies that $\pdim\topp P\geq 1$ and as a result $\findim\Lambda\geq 1$.
On the other hand, $\findim\Lambda\leq\varphi\dim\Lambda$ which is stated in the remark \ref{rem findim smaller than fidim } part \ref{rem part2ccc}, we conclude that $\varphi\dim\Lambda-\findim\Lambda\leq 1$.
\end{proof}

Now we give the proof of the statement \ref{thmfindimreduction} of Theorem \ref{bigthm1}.

\begin{proposition} \label{PROP findim reduction} If $\Lambda$ is a cyclic Nakayama algebra, then
$\findim \Lambda=\findim \bm{\varepsilon}(\Lambda)+2 $, provided that $\findim\Lambda\geq 2$ and $\gldim\Lambda=\infty$.
\end{proposition}
\begin{proof} 
Both of $\Lambda$ and the syzygy filtered algebra are finite representation type, so there exists  $\bm\varepsilon(\Lambda)$-module $X$ such that $\pdim X=\findim\bm\varepsilon(\Lambda)$. If $\Delta X\in\modd\Lambda$ is the corresponding module i.e. $\Hom_{\Lambda}(\cP,\Delta X)\cong X$, then $\pdim_{\bm\varepsilon(\Lambda)}X=\pdim_{\Lambda}\Delta X$ by proposition \ref{PROP carrying resolution on lambda to epsilon}. Therefore we get
\begin{align}
\findim\Lambda &=\sup\left\{\pdim M\vert\,M\in\modd\Lambda,\,\pdim M<\infty\right\}\label{al 1}\\
&=\sup\left\{\pdim\Omega^2(M)\vert\,M\in\modd\Lambda,\,\pdim M<\infty\right\}+2.\label{al 2}
\end{align}
Any $\Omega^2(M)$ has $\cB(\Lambda)$-filtration by proposition \ref{PROP X has filtration then it is second syzygy} which implies
\begin{align*}
\findim\Lambda&=2+\sup\left\{\pdim \Delta X\vert\,\Delta X\in Filt(\cB(\Lambda)),\,\pdim \Delta X<\infty\right\}\\
&=2+\sup\left\{\pdim X\vert\,X\in\modd\bm\varepsilon(\Lambda),\,\pdim X<\infty\right\}\\
&=2+\findim\bm\varepsilon(\Lambda)
\end{align*}
where $\Hom_{\Lambda}(\cP,\Delta X)\cong X$. 

We want to analyze the case of $\findim\Lambda=2$. The reduction above implies that $\findim\bm\varepsilon(\Lambda)=0$. This forces that all the simple $\bm\varepsilon(\Lambda)$-modules have periodic resolution i.e $\Omega^i(S)\cong\Omega^{i+j}(S)$ for some $i,j$ where $S$ is simple module. This implies that the radical of any projective module $P$
\begin{align*}
0\rightarrow \rad P(S)\rightarrow P(S)\rightarrow S\rightarrow 0
\end{align*}
is non-projective. Therefore, every projective $\bm\varepsilon(\Lambda)$-module is minimal which is equivalent to $\bm\varepsilon(\Lambda)$ is selfinjective by the remark \ref{Rem characterization of selfinjective algebras}.
\end{proof}

We placed the condition on the lower bound of the  finitistic dimension, otherwise we could not pass from \ref{al 1} to \ref{al 2}. For instance, if $\alpha_2\alpha_1=0, \alpha_1\alpha_3\alpha_2=0$ are relations of rank $3$ algebra $\Lambda$, then   $\findim\Lambda=1$ and $\findim\bm\varepsilon(\Lambda)=0$, so the reduction is not two.  

\begin{corollary}\label{COR fin dim reduction} If $\Lambda$ is a cyclic Nakayama algebra, then
\begin{align*}
\findim\Lambda=2d+\findim\bm\varepsilon^{d}(\Lambda)
\end{align*}
for any $d\geq 1$, provided that $\gldim\Lambda=\infty$ and $\findim\bm\varepsilon^{d-1}(\Lambda)\geq 2$.
\end{corollary}

\begin{proof}
By proposition \ref{PROP filtered algebra is Nakayama}, the syzygy filtered algebra is cyclic Nakayama algebra and of infinite global dimension. Therefore we can apply propositions \ref{PROP filtered algebra is Nakayama} and \ref{PROP findim reduction}  iteratively to get reduction two in each step, i.e.
\begin{align*}
\findim\Lambda &=2+\findim\bm\varepsilon(\Lambda)\\
&=4+\findim\bm\varepsilon^2(\Lambda)\\
&\hspace{2cm}\vdots\\
&=2(d-1)+\findim\bm\varepsilon^{d-1}(\Lambda)
\end{align*}
By the assumption $\findim\bm\varepsilon^{d-1}(\Lambda)\geq 2$, we can make one more reduction to get
\begin{align*}
\findim\Lambda=2d+\findim\bm\varepsilon^{d}(\Lambda).
\end{align*}
\end{proof}
\begin{corollary}\label{COR there is d such that fin dim d is between 1 and 2} If $\Lambda$ is cyclic Nakayama algebra of infinite global dimension, then there exists $d$ such that 
\begin{align*}
1\leq \findim\bm\varepsilon^{d}(\Lambda)\leq 2.
\end{align*}
\end{corollary}
\begin{proof}
By Theorem \ref{THM fidim becomes stable }, there exists $d$ such that $\varphi\dim\bm\varepsilon^d(\Lambda)=2$ and $\bm\varepsilon^{d+1}(\Lambda)$ is selfinjective. If we apply lemma \ref{LEM fidim 2 implies findim is 1 or 2 } to  $\bm\varepsilon^d(\Lambda)$, then\begin{align*}
1\leq \findim\bm\varepsilon^{d}(\Lambda)\leq 2.
\end{align*}
\end{proof}

Now we can prove the statement \ref{thmdifferencefiandfin} of Theorem \ref{bigthm1}.
\begin{theorem}\label{Thm difference between dims}
If $\Lambda$ is a cyclic Nakayama algebra then $\varphi\dim\Lambda-\findim\Lambda\leq 1$
\end{theorem}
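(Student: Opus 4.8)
The plan is to treat the finite-global-dimension case trivially and then induct on $\varphi\dim\Lambda$, pushing the comparison down to the syzygy filtered algebra $\bm\varepsilon(\Lambda)$ through the two reduction results already in hand. Since Remark \ref{findimsmaller} gives $\findim\Lambda \leq \varphi\dim\Lambda$ for free, the only thing left to prove is the reverse estimate $\findim\Lambda \geq \varphi\dim\Lambda - 1$.

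First I would dispose of $\gldim\Lambda < \infty$: there Remark \ref{findimsmaller} yields $\findim\Lambda = \gldim\Lambda = \varphi\dim\Lambda$, so the difference is $0$. Assuming $\gldim\Lambda = \infty$, Theorem \ref{evenfidim} makes $\varphi\dim\Lambda = 2m$ even, and I would induct on $m$. The base cases are $m=0$, where $\Lambda$ is self-injective and $\findim\Lambda = \varphi\dim\Lambda = 0$, and $m=1$, i.e.\ $\varphi\dim\Lambda = 2$, which is exactly Proposition \ref{fidim2}.

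For the inductive step $m \geq 2$, I would first record that $\bm\varepsilon(\Lambda)$ is again cyclic Nakayama of infinite global dimension by Proposition \ref{ecyclic}, with $\varphi\dim\bm\varepsilon(\Lambda) = 2m-2 \geq 2$ by Theorem \ref{reductionvar}; in particular $\bm\varepsilon(\Lambda)$ is not self-injective by Theorem \ref{smallvalues}. The goal is the finitistic reduction $\findim\Lambda = \findim\bm\varepsilon(\Lambda) + 2$, and the key subtlety is that Proposition \ref{reductionfindim} supplies it only once we know $\findim\Lambda > 2$. I would establish that hypothesis from below: non-self-injectivity of $\bm\varepsilon(\Lambda)$ forces a module of projective dimension $1$ as in the proof of Proposition \ref{fidim2}, so $d := \findim\bm\varepsilon(\Lambda) \geq 1$; lifting a $\bm\varepsilon(\Lambda)$-module $M'$ of projective dimension $d$ back through Remarks \ref{uniquefilt} and \ref{modules} to a $\Lambda$-module $M$ with $H(\Omega^2 M) \cong M'$, the resolution transfer \ref{higherresolution} (with $j=0$) shows $\Omega^2 M$ is nonprojective of projective dimension $d$, whence $p\dim_\Lambda M = d + 2 \geq 3$ and $\findim\Lambda > 2$.

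With that hypothesis secured, Proposition \ref{reductionfindim} and Theorem \ref{reductionvar} together collapse the difference onto the smaller algebra,
\begin{align*}
\varphi\dim\Lambda - \findim\Lambda = \big(\varphi\dim\bm\varepsilon(\Lambda) + 2\big) - \big(\findim\bm\varepsilon(\Lambda) + 2\big) = \varphi\dim\bm\varepsilon(\Lambda) - \findim\bm\varepsilon(\Lambda),
\end{align*}
and the induction hypothesis applied to $\bm\varepsilon(\Lambda)$, whose $\varphi$-dimension $2(m-1)$ is strictly smaller, bounds the right-hand side by $1$. The main obstacle is precisely the verification that $\findim\Lambda > 2$ in the inductive step: this is the hypothesis on which Proposition \ref{reductionfindim} rests, and the degenerate behaviour it excludes, where $\findim\bm\varepsilon(\Lambda) = 0$ with $\bm\varepsilon(\Lambda)$ self-injective, as in the $\findim = 1$ examples flagged before \ref{findim1}, is exactly what distinguishes $\varphi\dim\Lambda = 2$ from the larger even values. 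Ruling it out by the lifting bound $\findim\Lambda \geq \findim\bm\varepsilon(\Lambda) + 2 \geq 3$ is what makes the induction run.
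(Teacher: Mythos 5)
Your proof is correct and follows essentially the same route as the paper: both arguments reduce the comparison to $\bm\varepsilon(\Lambda)$ via Theorem \ref{reductionvar} and Proposition \ref{reductionfindim} and anchor it at the $\varphi\dim=2$ case handled by Proposition \ref{fidim2}, the only difference being that you phrase the descent as an induction on $\tfrac{1}{2}\varphi\dim\Lambda$ while the paper telescopes directly down to $\bm\varepsilon^{d-1}(\Lambda)$. Your explicit verification that $\findim\Lambda>2$ in the inductive step (via the lifting bound $\findim\Lambda\geq\findim\bm\varepsilon(\Lambda)+2\geq 3$) is a welcome extra care, since the paper applies Proposition \ref{reductionfindim} at each stage without spelling out that its hypothesis holds.
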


\begin{proof} If global dimension of $\Lambda$ is finite, statement is true, since $gl\dim\Lambda=\varphi\dim\Lambda=\findim\Lambda$. So, we assume that global dimension is infinite.\\

By Theorem \ref{THM fidim becomes stable }, there exists $d$ such that $\bm{\varepsilon}^d(\Lambda)$ is a selfinjective algebra but $\bm{\varepsilon}^{d-1}(\Lambda)$ is not. By the reduction \ref{thm reductionvar} we get $\varphi\dim\Lambda=2d$, $\varphi\dim\bm{\varepsilon}^d(\Lambda)=0$ and $\varphi\dim\bm{\varepsilon}^{d-1}(\Lambda)=2$. By the corollary \ref{COR there is d such that fin dim d is between 1 and 2} we get 
\begin{align}\label{inequality}
1\leq \findim\bm{\varepsilon}^{d-1}(\Lambda)\leq 2.
\end{align} 
On the other hand we have $2+\findim\bm{\varepsilon}^{k}(\Lambda)=\findim\bm{\varepsilon}^{k-1}(\Lambda)$  for any $1\leq k\leq d-1$ because of the corollary \ref{COR fin dim reduction}.
This implies
\begin{align*}
1+2d-2\leq \findim\Lambda\leq 2+2d-2\Rightarrow
\varphi\dim\Lambda-1\leq \findim\Lambda\leq \varphi\dim\Lambda.
\end{align*}
We obtain the desired inequality $\varphi\dim\Lambda-\findim\Lambda\leq 1$.
\end{proof}

\subsection{Results about Gorenstein dimension} 
We begin with definition of Gorenstein dimension. By using duality and representation type of Nakayama algebras, instead of injective dimension of $\Lambda$, we prefer to study projective dimensions of injective modules. Then we state and prove results regarding Gorenstein homological properties. 
\begin{definition}\label{defGorensteindimension} If projective resolution of injective modules are finite, algebra is called \emph{Gorenstein}. Supremum of projective dimensions of injective modules is called \emph{Gorenstein dimension}, and denoted by $\gordim$. The formulation is 
\begin{align*}
\gordim\Lambda=\sup\left\{\pdim I\vert\, I\in\modd\Lambda\,\text{is injective}\right\}<\infty.
\end{align*}
\end{definition}

First, we need possible small values of Gorenstein dimension. Then we show the reduction method.
\begin{lemma}\label{LEM gordim cannot be one}
If $\Lambda$ is cyclic Nakayama algebra which is Gorenstein, then Gorenstein dimension cannot be one.
\end{lemma}
\begin{proof}
If $\Lambda$ is selfinjective, by definition, $\gordim\Lambda=0$. If $\Lambda$ is not selfinjective, there exists a projective-injective module $PI$ such that its quotient is injective $I$ and  the sequence 
\begin{align*}
0\rightarrow \soc PI\rightarrow PI\rightarrow I\rightarrow 0
\end{align*}
is exact. Since $\Lambda$ is cyclic, there is no simple projective module. Therefore $\pdim I=1+\pdim\soc PI\geq 2$ which implies $\gordim\Lambda\neq 1$. 
\end{proof}

\begin{lemma}\label{LEM the submodule of delta gives injective} Let $\Delta_i$ be an element of $\cB(\Lambda)$ such that it is submodule of an indecomposable projective-injective $\Lambda$-module $PI$ and it has a proper submodule $X$, i.e.
\begin{align*}
X\hookrightarrow\Delta_i\hookrightarrow PI
\end{align*}
Then the quotient $\faktor{PI}{X}$ is an injective $\Lambda$-module.
\end{lemma}

\begin{proof}
$PI$ is projective-injective module, therefore $P=P(\tau^{-1}\topp PI)$ is minimal projective by the corollary \ref{COR P_i is minimal iff P i+1 is projective injective}. By proposition \ref{PROP x in base set iff x is omegaone of radical}, $\Delta_i$ is the first syzygy of $\rad P$. The difference of the lengths of $PI$ and $P$ is
\begin{align*}
\ell(PI)-\ell(P)=\ell(\rad P)+\ell(\Delta_i)-\left(\ell(\rad P)+1\right)=\ell(\Delta)-1.
\end{align*}
Since $X$ is proper submodule of $\Delta_i$, $\ell(\Delta_i)\geq 2$. By proposition \ref{PROP ci<ci+1 implies nonzero defect}, there are $\ell(\Delta_i)-1\geq 1$ proper injective quotients of $PI$ and it is clear that the quotient $\faktor{PI}{X}$ is one of them.
\end{proof}

\begin{proposition}\label{PROP every injective eps mod is second syzygy of injective lambda} If $I$ is an injective but non-projective $\bm\varepsilon(\Lambda)$-module, then there exists an injective but non-projective $\Lambda$-module $I_{\Lambda}$ such that $\Hom_{\Lambda}\left(\cP,\Omega^2(I_{\Lambda})\right)\cong I$.
\end{proposition}

\begin{proof}
$I$ is an injective $\bm\varepsilon(\Lambda)$-module, by the categorical equivalence \ref{diagram categorical equivalence} and the remark \ref{Rem equivalnece of resolutions}, there exists $\Lambda$-module $\Delta I$ such that $\Hom_{\Lambda}(\cP,\Delta I)\cong I$. We will show that $\Delta I$ is isomorphic to a second syzygy of an injective $\Lambda$-module $I_{\Lambda}$.\\
\begin{claim}\label{Cla 1 318   } There exist a projective $\Lambda$-module $P$ such that  $\Delta I$ is submodule of $P$.
\end{claim}
\begin{proof}
$\Delta I$ has $\cB(\Lambda)$-filtration, therefore $\soc \Delta I\in\cS(\Lambda)$. By the categorical equivalence \ref{diagram categorical equivalence} there is a projective module $P$ such that $\soc \Delta I\cong \soc P$ (see remark \ref{Rem equivalnece of resolutions}). By the  uniseriality lemma \ref{uniserialitylemma}, either $P\subseteq \Delta I$ or $\Delta I\subset P$. The former is impossible because it makes $\Delta I$ a projective module by \ref{LEM projectivecannotbequotientlemma} and $\Hom_{\Lambda}(\cP,\Delta I)$ would be projective $\bm\varepsilon(\Lambda)$-module which violates the assumption of proposition. Therefore $\Delta I$ is a proper submodule of projective $P$.
\end{proof}
 
\begin{claim}\label{Cla 2 318   } $P$ is not $\cB(\Lambda)$-filtered module.
\end{claim}
\begin{proof}
Assume that $P$ has $\cB(\Lambda)$-filtration. Then $I\cong\Hom_{\Lambda}(\cP,\Delta I)$ becomes proper submodule of projective $\bm\varepsilon(\Lambda)$-module $\Hom_{\Lambda}(\cP,P)$ which violates the assumption on the injectivity of $I$ in $\modd\bm\varepsilon(\Lambda)$.
\end{proof}
Let's denote the quotient $\faktor{P}{\Delta I}$ by $Q$, i.e.
\begin{align*}
0\rightarrow \Delta I\rightarrow P\rightarrow Q=\faktor{P}{\Delta I}\rightarrow 0
\end{align*}
\begin{claim}\label{Cla 3 318   }
$Q$ is a proper submodule of an element of the base set $\cB(\Lambda)$.
\end{claim}
\begin{proof}
Since $\soc Q\cong\tau^{-1}\topp \Delta I\in\cS(\Lambda)$, there exist $\Delta_i\in\cB(\Lambda)$ such that $\soc\Delta_i\cong\soc Q$. By the uniseriality lemma we have three cases to examine.
\begin{enumerate}[label=Case \arabic*)]
\item $\Delta_i\cong Q$ is not possible, otherwise we would conclude that $P$ has $\cB(\Lambda)$-filtration which violates the claim \ref{Cla 2 318   }.
\item $\Delta_i\subset Q$ is not possible. Assume to the contrary that $\Delta_i$ is a proper submodule of $Q$. Then, the extension $\Delta I'=\begin{vmatrix}
\Delta_i\\\Delta I
\end{vmatrix}$ of $\Delta_i$ by $\Delta I$ becomes $\cB(\Lambda)$-filtered indecomposable proper submodule of $P$. If we apply the evaluation functor $\Hom_{\Lambda}(\cP,-)$ to $\Delta I$ and $\Delta I'$ we get $\Hom_{\Lambda}(\cP,\Delta I)\hookrightarrow\Hom_{\Lambda}\left(\cP,I'\Delta\right)$ in $\modd\bm\varepsilon(\Lambda)$. This makes $I$ a proper submodule of $I'$. However, an injective module cannot be a proper submodule of any module by definition.
\item Hence $Q$ is a proper submodule of $\Delta_i$.
\end{enumerate}
\end{proof}
Because $Q$ is proper submodule of $\Delta_i$ and $\Delta_i$ is a submodule of projective-injective module $P'$ by proposition \ref{PROP properties of the basesetproperties}, we get the exact sequence
\begin{align*}
0\rightarrow Q\rightarrow P'\rightarrow \faktor{P'}{Q}\rightarrow 0.
\end{align*}

If we apply proposition \ref{PROP every injective eps mod is second syzygy of injective lambda} to $Q\hookrightarrow \Delta_i\hookrightarrow P'$, then the quotient $\faktor{P'}{Q}$ is an injective $\Lambda$-module.  We can choose $I_{\Lambda}$ as
\begin{align*}
I_{\Lambda}\cong\faktor{P'}{\left(\faktor{P}{\Delta I}\right)}
\end{align*}
which satisfies $\Omega^2(I_{\Lambda})\cong \Delta I$ and $\Hom_{\Lambda}(\cP,\Delta I)\cong I$ in $\modd\bm\varepsilon(\Lambda)$.

\end{proof}

This result is useful, in the sense that for any injective but non-projective $\bm\varepsilon(\Lambda)$-module $I$, there exists at least one injective $\Lambda$-module $I_{\Lambda}$ such that $\Hom_{\Lambda}(\cP,\Omega^2(I_{\Lambda}))\cong I$. However, the converse of this statement is not true in general. First of all, the second syzygy of injective $\Lambda$-modules can be either projective or periodic modules or even trivial. For example, if $\Lambda$ is given by the Kupisch series $(4,3,4,3)$, then $\bm\varepsilon(\Lambda)$ is selfinjective algebra with the Kupisch series $(2,2)$ and its simple modules are $\Hom_{\Lambda}(\cP,\Omega^2(I_1))$ and $\Hom_{\Lambda}(\cP,\Omega^2(I_3))$. For the algebra given by the Kupisch series $(5,4,3,2,2)$, we get  $\pdim I_2=\pdim I_3=1$. Second observation is that $I_{\Lambda}$ might not be the unique module verifying proposition \ref{PROP every injective eps mod is second syzygy of injective lambda}. For example, algebra given by the Kupisch series $(5,4,4,4,3)$, we have $\Omega^2(I_3)\cong\Omega^2(I_4)$. Nevertheless, the corollary below follows from propositions \ref{PROP every injective eps mod is second syzygy of injective lambda} and \ref{PROP carrying resolution on lambda to epsilon}.

\begin{corollary} If $I$ is an injective non-projective $\bm\varepsilon(\Lambda)$-module of finite projective dimension, then $\pdim _{\Lambda}I_{\Lambda}=\pdim_{\bm\varepsilon(\Lambda)} I+2$ where $I_{\Lambda}$ is an injective module satisfying $\Hom_{\Lambda}(\cP,\Omega^2(I_{\Lambda}))\cong I$.
\end{corollary}

We give the proof of the statement \ref{thmgordireduction} of Theorem \ref{bigthm1}.
\begin{theorem}\label{THM gorenstein reduction} If $\Lambda$ is a cyclic Nakayama algebra which is Gorenstein then $\gordim\Lambda=\gordim\bm\varepsilon(\Lambda)+2$ provided that $\gordim\Lambda\geq 2$, and $\bm\varepsilon(\Lambda)$ is also Gorenstein.
\end{theorem}
\begin{proof}
If $\Lambda$ is Gorenstein, then
\begin{align}
\gordim\Lambda &=\sup\left\{\pdim I\vert\,I\in\modd\Lambda\,\text{is injective} \right\}\nonumber\\
&=2+\sup\left\{\pdim\Omega^2(I)\vert\,I\in\modd\Lambda\,\text{is injective} \right\}\nonumber\\
&=2+\sup\left\{\pdim \Delta I\vert\,\Delta I\cong\Omega^2(I)\in Filt(\cB(\Lambda)),I\,\text{is injective} \right\}\label{al 4}\\
&=2+\sup\left\{\pdim_{\bm\varepsilon(\Lambda)} I'\vert\, I'\in\modd\bm\varepsilon(\Lambda),\, \Hom_{\Lambda}(\cP,\Delta I)\cong I'\, \text{is injective} \right\}\label{al 3}\\
&=2+\gordim\bm\varepsilon(\Lambda).\nonumber
\end{align}
where we used proposition \ref{PROP every injective eps mod is second syzygy of injective lambda} to pass from \ref{al 4} to \ref{al 3}. 
In particular, the finiteness of $\gordim\Lambda$ implies $\gordim\bm\varepsilon(\Lambda)$ is finite, so $\bm\varepsilon(\Lambda)$ is Gorenstein.
\end{proof}

\begin{proposition}\label{PROP small gordim} If $\Lambda$ is Gorenstein and $\varphi\dim\Lambda=2$, then $\gordim\Lambda=2$.
\end{proposition}
\begin{proof}
Since $\Lambda$ is Gorenstein, projective dimension of any injective module is finite. Therefore $\gordim\Lambda\leq\findim\Lambda$, and by Theorem \ref{Thm difference between dims}, we get $\gordim\Lambda\leq\varphi\dim\Lambda=2$.\par
Because $\varphi\dim\Lambda=2$, $\Lambda$ is not selfinjective, hence $\gordim\Lambda\neq 0$. We need to show that one is not in the range. Assume to the contrary that let $\gordim\Lambda=1$.  We can choose an injective module such that it is the quotient of a projective-injective module, i.e. $\Omega^1(I)$ is simple module. Since $\Lambda$ is cyclic, there is no simple projective module, we derive a contradiction. Hence $\gordim\Lambda\neq 1$. We obtain $\gordim\Lambda=\varphi\dim\Lambda=2$.
\end{proof}
Based on the reduction stated in \ref{THM gorenstein reduction}, we give another proof of equality about $\varphi$-dimension and Gorenstein dimension \cite{ralf}.
\begin{theorem}\label{gordimisfidim}
Let $\Lambda$ be a cyclic Nakayama algebra whic is Gorenstein. Then, $\varphi$-dimension and Gorenstein dimension of $\Lambda$ match.
\end{theorem}

\begin{proof}
If global dimension of $\Lambda$ is finite, then it is equal to both $\varphi\dim\Lambda$ and $\gordim\Lambda$, which is trivial case. Also, if $\Lambda$ is selfinjective then $\varphi\dim\Lambda=\gordim\Lambda=0$. So, we analyze the case of infinite global dimension with nonzero $\varphi$-dimension.\par

By Theorem \ref{THM fidim becomes stable }, there exists $d$ such that $\bm{\varepsilon}^d(\Lambda)$ is selfinjective but $\bm{\varepsilon}^{d-1}(\Lambda)$ is not. 
If we combine this observation with proposition \ref{PROP small gordim} we get
\begin{align}
\gordim\bm\varepsilon^{d-1}(\Lambda)=\varphi\dim\bm\varepsilon^{d-1}(\Lambda)=2.\nonumber
\end{align}
By the reductions \ref{thm reductionvar} and \ref{THM gorenstein reduction}, we conclude that
\begin{align*}
\gordim\bm\varepsilon^{d-1}(\Lambda)=\varphi\dim\bm\varepsilon^{d-1}(\Lambda)=2 &\iff \gordim\bm\varepsilon^{d-2}(\Lambda)=\varphi\dim\bm\varepsilon^{d-2}(\Lambda)=4 \\
&\hspace{4cm}\vdots\\
&\iff \gordim\bm\varepsilon(\Lambda)=\varphi\dim\bm\varepsilon(\Lambda)=2d-2 \\
&\iff\gordim\Lambda=\varphi\dim\Lambda=2d.
\end{align*}
\end{proof}

In Theorem \ref{gordimisfidim}, we showed the equality of $\varphi$-dimension and Gorenstein dimension of cyclic Nakayama algebras. We combine it with the result \ref{Sen evenfidim} to deduce possible values of Gorenstein dimension.

\begin{corollary}
If $\Lambda$ is Gorenstein with infinite global dimension, then $\gordim\Lambda$ is even. 
\end{corollary}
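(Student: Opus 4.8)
The plan is to obtain the statement as an immediate consequence of the two theorems that precede it, namely Theorem \ref{gordimisfidim} and Theorem \ref{evenfidim}. The point is that evenness of $\gordim\Lambda$ has already been secured indirectly: once we know the Gorenstein dimension coincides with the $\varphi$-dimension, the parity question reduces to a fact about $\varphi$-dimension that is established in full generality in the infinite global dimension case.

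First I would verify that the hypotheses of both theorems are met. Since $\Lambda$ is assumed Gorenstein and cyclic Nakayama, Theorem \ref{gordimisfidim} applies and yields $\gordim\Lambda=\varphi\dim\Lambda$. Note that this equality was proven precisely in the infinite global dimension case via the recursive reduction to a self-injective $\bm{\varepsilon}^d(\Lambda)$, so the hypothesis of infinite global dimension here is exactly the regime in which the equality was obtained; there is no tension between the two assumptions. Second, because $\gldim\Lambda=\infty$ by hypothesis, Theorem \ref{evenfidim} applies and gives that $\varphi\dim\Lambda$ is an even number.

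Chaining these, I would write $\gordim\Lambda=\varphi\dim\Lambda$, and since the right-hand side is even, so is the left-hand side, completing the argument. In effect the proof is a single substitution: replace $\gordim\Lambda$ by $\varphi\dim\Lambda$ using the first theorem, then read off parity from the second.

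There is no substantive obstacle at this stage; all the difficulty was absorbed into the earlier results. If anything merits a brief remark, it is that the parity conclusion genuinely requires infinite global dimension: when $\gldim\Lambda<\infty$ one has $\gordim\Lambda=\gldim\Lambda$ with no parity constraint, so the hypothesis cannot be dropped. Thus the proof is simply an appeal to \ref{gordimisfidim} followed by \ref{evenfidim}.
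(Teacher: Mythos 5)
Your proof is correct and is exactly the paper's argument: the corollary is obtained by substituting $\gordim\Lambda=\varphi\dim\Lambda$ from Theorem \ref{gordimisfidim} and then invoking the evenness of $\varphi\dim\Lambda$ from Theorem \ref{evenfidim}. Your additional remark that infinite global dimension is genuinely needed (since otherwise $\gordim\Lambda=\gldim\Lambda$ carries no parity constraint) is a nice clarification but does not change the route.
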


Gorenstein properties of Nakayama algebras were studied by Ringel in \cite{ringel2013gorenstein}. In proposition 6 of the same work, it is stated that Gorenstein dimension is an even number under some conditions on the lengths of modules. The corollary above extends this result.

\subsection{Results about dominant dimension}
We study dominant dimension in terms of the syzygy filtered algebras.

\begin{definition}\cite{aus}\label{defdomdim} Dominant dimension of algebra $A$ is:
\begin{align*}
\domdim A=\sup\left\{m\vert\,I_i\, \text{is projective for }i=0,1,\ldots m\right\}+1
\end{align*}
where $0\rightarrow\,_AA\rightarrow I_0\rightarrow I_1\ldots$ is the minimal injective resolution of $A$. 
Dominant dimension of semisimple algebra is set to be infinity. We can define dominant dimension of any module $M$ by
 \begin{align*}
\domdim M=\sup\left\{m\vert\,I_i\, \text{is projective for }i=0,1,\ldots m\right\}+1
\end{align*}
where $0\rightarrow _A M\rightarrow I_0\rightarrow I_1\ldots$ is the minimal injective resolution of $M$. If we restrict $M$ to projective modules we get the following characterization of dominant dimension 
\begin{align}\label{gercek tanim2}
\domdim A=\sup\left\{\domdim P\,\vert\, P \text{ is projective non-injective } A\, \text{module}\right\}.
\end{align}
It is convenient for us to study projective dimensions of injective modules, because for a Nakayama algebra $\Lambda$, $\domdim\Lambda=\domdim\Lambda^{op}$ holds. Therefore the statement \ref{gercek tanim2}
 is equivalent to
 \begin{align}\label{defdomdim gercek tanim}
\domdim A=\sup\left\{\domdim I\,\vert\, I \text{ is injective non-projective } A\, \text{module}\right\}.
\end{align}
 We always use the characterization of dominant dimension \ref{defdomdim gercek tanim} in the proofs. 
\end{definition}

We begin with a lemma which shows that the first syzygy of an injective module has to be a submodule of an element of the base set.

\begin{lemma}\label{LEM injective first szyg is submodule of delta} If $I$ is an injective non-projective $\Lambda$-module, then $\Omega^1(I)$ is proper a submodule of an element  $\Delta_i$ of the base set $\cB(\Lambda)$.
\end{lemma}

\begin{proof}
Let $P_{i}$ be the injective envelope of $I$. The short exact sequence
\begin{align*}
0\rightarrow \Omega^1(I)\rightarrow P_i\rightarrow I\rightarrow 0
\end{align*}
implies that 
\begin{align}\label{eqSep29 1}
\ell(P_i)-\ell(I)=\ell(\Omega^1(I))\neq 0,
\end{align}
because $I$ is not projective. \par
On the other hand, $P_{i-1}$ is minimal injective by the corollary \ref{COR P_i is minimal iff P i+1 is projective injective} which implies $\ell(P_{i-1})\leq\ell(P_{i})$. Moreover by the result \ref{PROP x is an element of the base set iff it is second syzygy}, $\Omega^2(\topp P_{i-1})\cong \Delta_i$ is an element of the base set $\cB(\Lambda)$ and it satisfies 
\begin{align}\label{eqSep29 2}
\ell(\Delta_i)=\ell(P_i)-\ell(\rad P_{i-1}).
\end{align} 
Notice that $\ell(\Delta_i)\neq 1$, otherwise by the result \ref{PROP ci=ci+1 implies defect zero, minimal projective} $P_i$ would not have a proper injective quotient. 
If we subtract \ref{eqSep29 1} from \ref{eqSep29 2}, we get
\begin{align}
\ell(\Delta_i)-\ell(\Omega^1(I))=\ell(I)-\ell(\rad P_{i-1}).
\end{align}
Because $\topp P_{i-1}\cong \topp I$ we can apply the corollary \ref{uniserialitylemma injectiveCOR} and find the only possible case.
\begin{enumerate}[label=Case \arabic*)]
\item $\rad P_{i-1}\cong I$ is not possible, it makes $I$ a submodule of $P_{i-1}$ which violates dual of lemma \ref{LEM projectivecannotbequotientlemma}.
\item $I$ cannot be quotient of $\rad P_{i-1}$, otherwise $I$ would become subquotient of $P_{i-1}$ and injective modules cannot be subquotients.
 \end{enumerate}
 As a result, $\rad P_{i-1}$ is the quotient of $I$ which implies $\ell(\Delta_i)>\ell(\Omega^1(I))$. They share the isomorphic socle, by the uniseriality lemma $\Omega^1(I)$ is a submodule of $\Delta_i$.
\end{proof}

\begin{lemma}\label{LEM filtered proj inj implies it is filtered in epsilon}
If $PI$ is an indecomposable projective-injective $\Lambda$-module which is filtered by $\cB(\Lambda)$, then the module $\Hom_{\Lambda}(\cP,PI)$ is projective-injective $\bm\varepsilon(\Lambda)$-module.
\end{lemma}
\begin{proof}
Since $PI$ is projective module and $PI\in Filt(\cB(\Lambda))$, $\Hom_{\Lambda}(\cP,PI)$ is projective $\bm\varepsilon(\Lambda)$-module.  Therefore it is enough to show that $\Hom_{\Lambda}(\cP,PI)\in\modd\bm\varepsilon(\Lambda)$ is injective. To emphasize its filtration, let's denote $PI$ by $\Delta X$ and the corresponding module by $X$, i.e. $\Hom_{\Lambda}(\cP,PI)=\Hom_{\Lambda}(\cP,\Delta X)\cong X$.\\
Suppose that $X$ is not injective, so there has to be an injective envelope $I(X)$ which gives the nonsplit exact sequence
\begin{align}\label{al 5}
0\rightarrow X\rightarrow I(X)\rightarrow Q=\faktor{I(X)}{X}\rightarrow 0
\end{align}
in $\modd\bm\varepsilon(\Lambda)$ with nontrivial $Q$. By the categorical equivalence, there are $\cB(\Lambda)$-filtered modules $\Delta X$, $\Delta I$, $\Delta Q$ such that $\Hom_{\Lambda}(\cP,\Delta X)\cong X$, $\Hom_{\Lambda}(\cP,\Delta I)\cong I$, $\Hom_{\Lambda}(\cP,\Delta Q)\cong Q$ and the sequence
\begin{align}\label{al 6}
0\rightarrow \Delta X\rightarrow\Delta I\rightarrow\Delta Q\rightarrow 0
\end{align}
is nonsplit and exact in $\modd\Lambda$. However $\Delta X$ is the indecomposable projective-injective module $PI$ and it cannot be a proper  submodule of another module. Therefore $\Delta Q$ and $Q$ have to be trivial in \ref{al 6} and \ref{al 5} respectively, which contradicts to assumption that $Q$ is not trivial. As a result, $X=\Hom_{\Lambda}(\cP,PI)$ is injective $\bm\varepsilon(\Lambda)$-module with the corresponding module $PI$.
\end{proof}

The converse of the above statement is not true in general, a projective-injective $\bm\varepsilon(\Lambda)$-module has to have the corresponding $\Lambda$-module which is a projective but not necessarily a projective-injective.

The converse of proposition \ref{PROP every injective eps mod is second syzygy of injective lambda} is not true in general. However, under some conditions it can be valid. The key observation is stated below.
\begin{proposition}\label{PROP domdim geq 3 implies bijection} Let $\Lambda$ be a cyclic Nakayama algebra such that $\domdim\Lambda\geq 3$. Then, there is a bijection between indecomposable injective $\Lambda$-modules and indecomposable injective $\bm\varepsilon(\Lambda)$-modules via the functor $\Hom_{\Lambda}(\cP,\Omega^2(-))$.
\end{proposition}

\begin{proof} 
Consider the projective resolution of an injective module $I$
\begin{align}\label{resolution domdim}
\cdots P_3\rightarrow P_2\rightarrow P_1\rightarrow P_0\rightarrow I
\end{align}
in $\modd\Lambda$. Since $\domdim\Lambda\geq 3$, we deduce that $P_0, P_1, P_2$ are projective-injective modules. Also, $\Omega^3(I)$ is nontrivial module, otherwise $P_2\cong\Omega^2(I)$ implies that a syzygy module is projective-injective which is not possible by lemma \ref{LEM projectivecannotbequotientlemma}. By proposition \ref{PROP all higher syzygies are filtered}, we conclude that  $P_2,\,\Omega^2(I)\in Filt(\cB(\Lambda))$.\\

Assume that $\Hom_{\Lambda}(\cP,\Omega^2(I))$ is not an injective module in $\modd\bm\varepsilon(\Lambda)$. $\Hom_{\Lambda}(\cP,P_2)$ is projective-injective $\bm\varepsilon(\Lambda)$-module by lemma \ref{LEM filtered proj inj implies it is filtered in epsilon}, therefore $\Hom_{\Lambda}(\cP,\Omega^2(I))$ is a quotient of $\Hom_{\Lambda}(\cP,P_2)$. We divide the proof into steps.

\begin{claim}\label{claim 331} For any injective non-projective $\Lambda$-module $I$, the projective $\syz$-module $\Hom_{\Lambda}(\cP,P_2)$ which covers $\Hom_{\Lambda}(\cP,\Omega^2(I))$ has injective quotients in $\modd\syz$.
\end{claim}

\begin{proof} Assume to the contrary that $PI=\ho{P_2}$ does not have any injective quotient in $\modd\bm\varepsilon(\Lambda)$. In particular $\ho{\Omega^2(I)}$ is not injective module, therefore there exists a projective module $\ho{\Delta P}$ such that $\ho{\Omega^2(I)}\subset\ho{\Delta P}$ by the uniseriality lemma \ref{uniserialitylemma}. By the construction \ref{diagram categorical equivalence} and the remark \ref{Rem equivalnece of resolutions}, $\Delta P$ is $\cB(\Lambda)$-filtered projective $\Lambda$-module such that $\Omega^2(I)\subset\Delta P$ in $\modd\Lambda$. If we combine this observation with the resolution \ref{resolution domdim}, we get the short exact sequences
\begin{gather}
\begin{gathered}\label{res domdim 2}
0\rightarrow \Omega^2(I)\rightarrow \Delta P\rightarrow Q=\faktor{\Delta P}{\Omega^2(I)}\rightarrow 0\\
0\rightarrow \Omega^2(I)\rightarrow P_1\rightarrow\Omega^1(I)\rightarrow 0
\end{gathered}
\end{gather}
where the quotient $Q$ has $\cB(\Lambda)$-filtration by lemma \ref{topsocConsecutive}.

 Notice that $P_1$ and $\Delta P$ have isomorphic socles. By the uniseriality lemma \ref{uniserialitylemma} we have either $\Delta P\subseteq P_1$ or $P_1\subset \Delta P$. The latter is not possible, because $P_1$ is projective-injective module. Therefore either $\Delta P\cong P_1$ or $\Delta P$ is a proper submodule of $P_1$. This implies that either $Q\cong \Omega^1(I)$ or $Q\subset \Omega^1(I)$ respectively which is deduced from the exact sequences \ref{res domdim 2}. However this is not possible because of lemma \ref{LEM injective first szyg is submodule of delta}, $\Omega^1(I)$ cannot have $\cB(\Lambda)$-filtered submodules.
\end{proof}

\begin{claim}\label{claim 332} For any injective non-projective $\Lambda$-module $I$, $\ho{\Omega^2(I)}$ is injective $\syz$-module.
\end{claim}
\begin{proof}
 Assume that $\ho{\Omega^2(I)}$ is not an injective $\syz$-module. Also, it is not a projective module by the assumption, i.e. $\Omega^3(I)$ is nontrivial. Let $I'$ be its injective envelope in $\modd\syz$ and $\Delta I'$ be the corresponding module, i.e. $\ho{\Delta I'}\cong I'$. Their existence follows from the fact that each of these modules are $\cB(\Lambda)$-filtered. 
 
The exact sequences
\begin{gather}
\begin{gathered}\label{res domdim 3}
0\rightarrow \Omega^2(I)\rightarrow \Delta I'\rightarrow Q=\faktor{\Delta I'}{\Omega^2(I)}\rightarrow 0\\
0\rightarrow \Omega^2(I)\rightarrow P_1\rightarrow\Omega^1(I)\rightarrow 0
\end{gathered}
\end{gather}
imply that $P_1$ and $\Delta I'$ have isomorphic socles.  By the uniseriality lemma \ref{uniserialitylemma} we have either $\Delta I'\subseteq P_1$ or $P_1\subset \Delta P$. The latter is not possible, because $P_1$ is projective-injective module. Therefore either $\Delta I'\cong P_1$ or $\Delta I'$ is a proper submodule of $P_1$. This implies that either $Q\cong \Omega^1(I)$ or $Q\subset \Omega^1(I)$ respectively. This contradicts with lemma \ref{LEM injective first szyg is submodule of delta}.
\end{proof}

\begin{claim}\label{claim different injectives} If there exist at least two injective $\Lambda$-modules $I_1,\,I_2$ such that $\Omega^2(I_1)\cong\Omega^2(I_2)$, then the projective covers are isomorphic, i.e. $P(I_1)\cong P(I_2)$.
\end{claim}

\begin{proof}
Consider the projective resolutions of $I_i$ 
\begin{center}
 $\xymatrixcolsep{5pt}
\xymatrix{& \cdots\ar[rr]\ar[rd] && P(\Omega^1(I_i))\ar[rr]\ar[rd]   && P(I_i)\ar[rr] &&I_i\\
&&\Omega^2(I_i)\ar[ru] && \Omega^1(I_i)\ar[ru]}$
 \end{center}
 where $1\leq i\leq 2$. Because $\Omega^2(I_i)$ is a submodule of $P(\Omega^1(I_i))$, by lemma \ref{topsocConsecutive} $\soc \Omega^1(I_1)\cong \soc\Omega^1(I_2)$.  In particular it is isomorphic to socles of $P(I_i)$. Notice that $P(I_i)$'s are projective-injective modules since they are projective covers of injective modules. Indecomposable projective-injective modules with the same socle is unique upto isomorphism, therefore $P(I_1)\cong P(I_2)$.
\end{proof}
\begin{claim}\label{cor of different injectives} If there exists at least two injective $\Lambda$-modules $I_1$, $I_2$ satisfying $\Omega^2(I_1)\cong\Omega^2(I_2)$, then the dominant dimension of $\Lambda$ is one.
\end{claim}
\begin{proof}
By the claim \ref{claim different injectives}, the projective covers of $I_1$ and $I_2$ are isomorphic, however $\Omega^1(I_1)\ncong \Omega^1(I_2)$ because of $I_1\ncong I_2$. Therefore $P(\Omega^1(I_1))\ncong P(\Omega^1(I_2))$.  Notice that their socles are isomorphic by the uniserialty lemma \ref{uniserialitylemma}, since $P(\Omega^1(I_1))\supset\Omega^2(I_1)\cong\Omega^2(I_2)\subset P(\Omega^1(I_2))$. They are uniserial modules, without loss of generality let $P(\Omega^1(I_1))$ be a proper submodule of $P(\Omega^1(I_2))$. Therefore $P(\Omega^1(I_1))$ cannot be a projective-injective module, by the definition \ref{defdomdim gercek tanim}, the dominant dimension of $I_1$ is one which makes $\domdim\Lambda=1$.
\end{proof}

We have all the ingredients for the proof. We remind that $\domdim\Lambda\geq 3$. By the claims \ref{claim 331} and \ref{claim 332}, the second syzygies of injective $\Lambda$-modules are the corresponding modules for the injective $\syz$-modules, i.e. $\ho{\Omega^2(I)}$ is injective $\syz$-module for any injective non-projective $I\in\modd\Lambda$. On the other hand, by the claim \ref{cor of different injectives}, $\ho{\Omega^2(-)}$ is a bijection, because non-isomorphic injective non-projective $\Lambda$-modules is equivalent to non-isomorphic injective non-projective $\syz$-modules.

\end{proof}

We state and prove the part \ref{thmdomdimreduction} of Theorem \ref{bigthm1}.
\begin{theorem}\label{THM domdim reduction}
Assume that dominant dimension of cyclic Nakayama algebra $\Lambda$ is greater or equal than three. Then, we have the reduction
\begin{align*}
\domdim\Lambda=\domdim \bm{\varepsilon}(\Lambda)+2.
\end{align*}
\end{theorem}

\begin{proof}
By the assumption $\domdim\Lambda\geq 3$, there exists an injective $\Lambda$-module $I$ such that in the projective resolution
\begin{align}
\cdots\rightarrow P_3\rightarrow P_2\rightarrow P_1\rightarrow P_0\rightarrow I\rightarrow 0\nonumber
\end{align}
$P_0$, $P_1$ and $P_2$ are certainly projective-injective $\Lambda$-modules. Moreover, if $\domdim I=d$, $d\geq 3$, then each $P_i$, $2\leq i\leq d$ are projective-injective. By proposition \ref{PROP all higher syzygies are filtered}, every $P_i$ and $\Omega^i(I)$, $i\geq 2$ are filtered by $\cB(\Lambda)$, therefore we can carry the resolution of $\Omega^2(I)$ into $\modd\bm\varepsilon(\Lambda)$ as
\begin{align}
\cdots\rightarrow\Hom_{\Lambda}(\cP, P_3)\rightarrow \Hom_{\Lambda}(\cP,P_2)\rightarrow \Hom_{\Lambda}(\cP,\Omega^2(I))\rightarrow 0.\nonumber
\end{align}

By proposition \ref{PROP domdim geq 3 implies bijection}, $\Hom_{\Lambda}(\cP,\Omega^2(I))\in\modd\bm\varepsilon(\Lambda)$ is injective and by lemma \ref{LEM filtered proj inj implies it is filtered in epsilon}, $\Hom_{\Lambda}(\cP,P_i)$ $d\geq i\geq 2$ are projective-injective. Therefore 
\begin{align*}
\domdim_{\Lambda}I&=2+\domdim\Omega^2(I)\\&=2+\domdim_{\bm\varepsilon(\Lambda)}\Hom_{\Lambda}(\cP,\Omega^2(I))\\
&=2+\domdim_{\bm\varepsilon(\Lambda)} I'
\end{align*}
where $I'=\Hom_{\Lambda}(\cP,\Omega^2(I))$ is an injective $\bm\varepsilon(\Lambda)$-module. 
By the characterization \ref{defdomdim gercek tanim} of the dominant dimension, we can take the supremum,
\begin{align}
\begin{split}\nonumber
\domdim\Lambda&=\sup\left\{\domdim I\,\vert\, I\in\modd\Lambda \text{ is injective non-projective } \right\}\\
&=\sup\left\{2+\domdim \Omega^2(I)\,\vert\, \Omega^2(I)\in Filt(\cB(\Lambda)),\, I\in\modd\Lambda \right\}\\
&=2+\sup\left\{\domdim_{\bm\varepsilon(\Lambda)} I'\,\vert\, I'\cong\Hom_{\Lambda}(\cP,\Omega^2(I))\in\modd\bm\varepsilon(\Lambda)\right\}\\
&=2+\domdim\bm\varepsilon(\Lambda).
\end{split}
\end{align}
\end{proof}

\begin{remark} In Theorem \ref{THM domdim reduction}, we assume that $\domdim\Lambda\geq 3$. We will explain the mechanism behind it in  section \ref{section3}. Briefly, when global dimension is finite, semisimple components might arise in the syzygy filtered algebra and this is the obstacle. Let $\gldim\Lambda<\infty$ and $\domdim\Lambda=2$ with an injective module $I$ satisfying $\pdim I=2$. Then the syzygy filtered algebra has semisimple components. By the definition \ref{defdomdim}, the dominant dimension of semisimple algebra is infinity, and the dominant dimension of connected components can take any value. For example, let $(3,2,3,2,2,2,2,2)$ be Kupisch series of $\Lambda$. Then $\gldim\Lambda=6$ and $\domdim\Lambda=2$. $\bm\varepsilon(\Lambda)$ splits into linear Nakayama algebra $L$ given by Kupisch series $(2,2,2,2,1)$ and a semisimple component $\Aa_1$. We get $\gldim\bm\varepsilon(\Lambda)=\max\left\{4,0\right\}=4$, however $\domdim\bm\varepsilon(\Lambda)=\min\left\{\domdim L,\domdim\Aa_1\right\}=\min\left\{4,\infty\right\}=4$ which is greater than $\domdim\Lambda=2$.
\end{remark}

\begin{proposition}\label{PROP small domdim} If $\varphi\dim\Lambda=2$, then $1\leq \domdim\Lambda\leq 2$.
\end{proposition}

\begin{proof}
Any injective module is quotient of a projective-injective $\Lambda$-module, therefore dominant dimension has always lower bound one.\par

If $\Lambda$ is of finite global dimension or Gorenstein, then it is clear that dominant dimension is bounded by $2$. So, we focus on the remaining case.\par

Assume to the contrary, let $\domdim\Lambda\geq 3$. Therefore the projective resolution\begin{align*}
\cdots\rightarrow P_2\rightarrow P_1\rightarrow P_0\rightarrow I\rightarrow 0
\end{align*}
of any injective $\Lambda$-module $I$ implies that $\Omega^2(I)$ is not projective and at least $P_0, P_1,P_2$ are projective-injective modules by the definition \ref{defdomdim gercek tanim}.\\
We can choose an injective module $I$ such that $\Omega^1(I)$ is simple module. Since $\Omega^2(I)$ is not projective, $P_1=P(\Omega^1(I))$ is minimal projective module, i.e. $\rad P_1\cong\Omega^2(I)$ is not projective. This forces that $\Omega^1(I)$ is an element of the top set $\cS'(\Lambda)$. On the other hand $\Omega^1(I)$ is the socle of $P_0$, therefore $\Omega^1(I)$ is also an element of the socle set $\cS(\Lambda)$. By proposition \ref{PROP properties of the basesetproperties} \ref{item last in prop 2.18}, $\Omega^1(I)\in \cS(\Lambda)\cap\cS'(\Lambda)$ implies that $\Omega^1(I)$ is an element of the base set $\cB(\Lambda)$.\par
$P_0$ is projective-injective module, so $P=P(\topp P_0)$ is minimal projective where $\tau\topp P\cong\topp P_0\cong\topp I$. By propositions \ref{PROP x in base set iff x is omegaone of radical} and \ref{PROP x is an element of the base set iff it is second syzygy} , $\Omega^2(\topp P)\cong\Omega^1(\rad P)$ is isomorphic to $\Omega^1(I)$, which shows that $I\cong\rad P$. By lemma \ref{LEM projectivecannotbequotientlemma}, $I$ cannot be injective module, which creates the contradiction. Therefore $\domdim\Lambda\leq 2$.
\end{proof}

\begin{theorem}\label{THM domdim}
If $\Lambda$ is cyclic non-selfinjective Nakayama algebra then $\varphi\dim\Lambda\geq \domdim\Lambda$.
\end{theorem}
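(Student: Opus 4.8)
The plan is to relate both quantities to the reduction chain of syzygy filtered algebras that terminates at a self-injective algebra. Since $\Lambda$ is cyclic nonselfinjective, if $\gldim\Lambda<\infty$ then $\varphi\dim\Lambda=\gldim\Lambda$ by Remark \ref{findimsmaller} and $\domdim\Lambda$ is finite and bounded above by $\gldim\Lambda$, so the inequality holds trivially; hence I would immediately reduce to the case $\gldim\Lambda=\infty$. In that case there is a minimal $d\geq 1$ with $\bm{\varepsilon}^d(\Lambda)$ self-injective but $\bm{\varepsilon}^{d-1}(\Lambda)$ not, and by Theorem \ref{reductionvar} applied $d$ times we have $\varphi\dim\Lambda=2d$.

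First I would handle the case $\domdim\Lambda\leq 2$, where the inequality $\varphi\dim\Lambda\geq 2=\domdim\Lambda$ holds simply because $\varphi\dim\Lambda=2d\geq 2$ (note $d\geq 1$ since $\Lambda$ is not self-injective, so $\varphi\dim\Lambda\geq 2$). The substantive case is $\domdim\Lambda\geq 3$, where I would invoke Proposition \ref{reductiondomdim} to write $\domdim\Lambda=\domdim\bm{\varepsilon}(\Lambda)+2$. The natural strategy is induction on $d$: the base case is when $\bm{\varepsilon}(\Lambda)$ is self-injective, i.e. $d=1$, so that $\varphi\dim\Lambda=2$; but then $\domdim\Lambda=\domdim\bm{\varepsilon}(\Lambda)+2$, and since a self-injective algebra has dominant dimension $\infty$ one must be careful. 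This is the delicate point: I expect the correct reading is that $\domdim\Lambda\geq 3$ forces $\bm{\varepsilon}(\Lambda)$ to have finite dominant dimension in the relevant chain, so one should track finiteness alongside the numerical equality.

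The main obstacle will be reconciling the finiteness hypotheses between the two reduction statements. Proposition \ref{reductiondomdim} requires $\domdim\Lambda\geq 3$, and Theorem \ref{reductionvar} requires infinite global dimension; I would argue that under $\gldim\Lambda=\infty$ and $\domdim\Lambda\geq 3$ the passage to $\bm{\varepsilon}(\Lambda)$ preserves both hypotheses (infinite global dimension is preserved by Proposition \ref{ecyclic}, and $\domdim$ drops by exactly $2$), so the induction on the number of reduction steps $d$ goes through: each reduction decreases $\varphi\dim$ by $2$ and decreases $\domdim$ by $2$ in lockstep, until one reaches a self-injective $\bm{\varepsilon}^d(\Lambda)$ where $\varphi\dim=0$. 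Comparing the two telescoping chains gives
\begin{align*}
\varphi\dim\Lambda=\varphi\dim\bm{\varepsilon}(\Lambda)+2\geq\domdim\bm{\varepsilon}(\Lambda)+2=\domdim\Lambda,
\end{align*}
where the middle inequality is the inductive hypothesis applied to $\bm{\varepsilon}(\Lambda)$. I would close by checking the base of the induction directly, namely that whenever $\bm{\varepsilon}(\Lambda)$ is self-injective the dominant dimension of $\Lambda$ cannot exceed $2$ unless $\Lambda$ itself already lies in a degenerate configuration, so that the inequality $\varphi\dim\Lambda\geq\domdim\Lambda$ is not violated at the terminal step.
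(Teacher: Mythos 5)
Your argument is correct, and it coincides with the second of the two proofs the paper offers for this theorem: the paper explicitly remarks that one can iterate Proposition \ref{reductiondomdim} in lockstep with Theorem \ref{reductionvar}, exactly as you propose, but leaves all details to the reader, so your write-up is essentially an elaboration of that sketch. The paper's primary proof is much shorter and quite different in flavour: it simply chains the inequalities $\domdim\Lambda\leq\findim\Lambda\leq\varphi\dim\Lambda$, the second of which is Remark \ref{findimsmaller}; your route costs more work but stays entirely inside the reduction machinery and makes the parallel drop of both invariants by $2$ explicit. The one point you leave genuinely open is the base case, and it closes cleanly: if $\bm{\varepsilon}(\Lambda)$ is self-injective then $\domdim\bm{\varepsilon}(\Lambda)=\infty$, so the assumption $\domdim\Lambda\geq 3$ together with Proposition \ref{reductiondomdim} would force $\domdim\Lambda=\infty$, hence (the Nakayama conjecture being known for Nakayama algebras) $\Lambda$ self-injective, a contradiction; therefore $\domdim\Lambda\leq 2=\varphi\dim\Lambda$ at the terminal step. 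The same observation disposes of your finite-global-dimension case: a non-self-injective algebra cannot have every term of its (finite) minimal injective coresolution projective, which is what justifies your claim that $\domdim\Lambda\leq\gldim\Lambda=\varphi\dim\Lambda$ there.
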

\begin{proof} If global dimension of $\Lambda$ is finite, then $\gldim\Lambda=\varphi\dim\Lambda\geq \domdim\Lambda$ is clear.\\
When global dimension is infinite, but algebra is Gorenstein, then dominant dimension is again bounded by  $\varphi$-dimension. Now we assume that algebra is not Gorenstein and is of infinite global dimension. By proposition \ref{PROP infinite global dimension, implies epsilon is cyclic}, any higher filtered algebra is cyclic, hence there is no semisimple components.

Let $\varphi\dim\Lambda=2d$.  Therefore $\varphi\dim\bm\varepsilon^{d-1}(\Lambda)=2$, and by proposition \ref{PROP small domdim}
\begin{align*}
\domdim\bm\varepsilon^{d-1}(\Lambda)\leq \varphi\dim\bm\varepsilon^{d-1}(\Lambda)=2.
\end{align*}
By the reductions \ref{thm reductionvar} and \ref{THM domdim reduction} we get
\begin{gather*}
\domdim\bm\varepsilon^{d-2}(\Lambda)\leq \varphi\dim\bm\varepsilon^{d-2}(\Lambda)=4.\\
\vdots\\
\domdim\bm\varepsilon(\Lambda)\leq \varphi\dim\bm\varepsilon(\Lambda)=2d-2\\
\domdim\Lambda\leq \varphi\dim\Lambda=2d.
\end{gather*}
which is the upper bound for dominant dimension. 
\end{proof}
This result appears in \cite{marczinzik2018upper}. We give another proof.

\begin{corollary}\label{domdim}
Dominant dimension of cyclic non-selfinjective Nakayama algebra $\Lambda$ is bounded by $2r$, where $r$ is the number of relations defining the algebra.
\end{corollary}
\begin{proof}
By the  results \ref{THM domdim} and \ref{Sen evenfidim}, $\domdim\Lambda\leq \varphi\dim\Lambda\leq 2r$.
\end{proof}

\subsection{Results about right finitistic and $\varphi$-dimensions}

We define $\findim\Lambda^{op}$ as finitistic dimension  of the algebra $\Lambda$ with respect to injective resolutions i.e.
\begin{align}
\findim\Lambda^{op}:=\sup\left\{\indim_{\Lambda} M\vert\,\, \indim_{\Lambda} M<\infty,\,\, M\in\text{mod-}\Lambda  \right\}\nonumber
\end{align}
In general left and right finitistic dimensions of an algebra can be different. But in the case of Nakayama algebras, we prove that they are same. To perform this, we need dual constructions stated in section \ref{section1}.

Injective $\Lambda$ modules are characterized by their tops using the relations given in \ref{relations}:
\begin{gather*}
 P_{(k_{2r-1})+1}=I_{k_{2}}\twoheadrightarrow\ldots\twoheadrightarrow I_{k_4+1}  \quad\text{ have simple } S_{k_{2r-1}+1}\text{  as their top}\\
P_{k_1+1}=I_{k_4}\twoheadrightarrow\ldots\twoheadrightarrow I_{k_2+1}   \quad\text{ have simple } S_{k_1+1} \text{  as their top}\\
\qquad\qquad\qquad\vdots\qquad\qquad\qquad\qquad\quad\quad\quad\\
P_{(k_{2r-3})+1}=I_{k_{2r}} \twoheadrightarrow\ldots\twoheadrightarrow I_{k_4+1}  \quad\quad\text{ have simple } S_{k_{2r-3}+1} \text{  as their top}
\end{gather*}

Let $\cT(\Lambda)$ be the complete set of representatives of tops of injective modules over $\Lambda$. By using the system of relations \ref{relations}, it is 
\begin{align*}
\cT(\Lambda)=\left\{S_{k_1+1}, S_{k_3+1},\ldots,S_{k_{2r-1}+1}\right\}.
\end{align*}
We call $\cT(\Lambda)$ as \emph{opposite top set}.
We define the set $\cT'(\Lambda)$ which is the complete set of representatives of inverse Auslander-Reiten translates of the tops of indecomposable injective modules. Hence $S_i\in\cT(\Lambda)$ if and only if $\tau^{-1} S_i\in \cT'(\Lambda)$. Because $\tau^{-1} S_i\cong S_{i-1}$, we get 
\begin{align*}
\cT'(\Lambda)=\left\{S_{k_{1}}, S_{k_3},\ldots,S_{k_{2r}-1}\right\}.
\end{align*}
We call $\cT'(\Lambda)$ as \emph{opposite socle set}.

\begin{definition}\label{defdualshortest} An indecomposable $\Lambda$-module $M$ satisfying $\topp M\in\cT(\Lambda)$ and $\soc M\in\cT'(\Lambda)$  is called \emph{shortest} if the composition factors of $M$ except $\topp M$ and $\soc M$ are not elements of $\cT(\Lambda)$ and $\cT'(\Lambda)$.
\end{definition}

\begin{definition} \label{definitionofnablaset}let $\Lambda$ be  a cyclic Nakayama algebra defined by the irredundant system of $r$-relations \ref{relations}. For each $j\in\{1,\dots,r\}$ let $\nabla_j$ be a shortest indecomposable uniserial module with 
$\soc\nabla_j\cong S_{k_{2j+1}}$ and $\topp\nabla_j\cong S_{k_{2j-1}+1}$. The complete set of representatives of modules $\nabla_j$'s is called \emph{the opposite base set} and denoted by $\nabla(\Lambda)$. Explicity we have
\begin{center}
${\bf\nabla}(\Lambda):=\left\{ \nabla_1\cong\begin{vmatrix}
    S_{k_{1}+1} \\
    \vdots  \\
    S_{k_{3}}
\end{vmatrix}\!, \nabla_2\cong\begin{vmatrix}
    S_{k_{3}+1}  \\
    \vdots  \\
   S_{k_{5}}
\end{vmatrix}\!,..,\nabla_j\cong\begin{vmatrix}
   S_{k_{2j-1}+1}  \\
    \vdots  \\
    S_{k_{2j+1}}
\end{vmatrix}\!,..,\nabla_r\cong \begin{vmatrix}
   S_{k_{2r-1}+1}  \\
    \vdots  \\
    S_{k_{1}}
\end{vmatrix}\!\right\}.$ 
\end{center}
\end{definition}

\begin{proposition}\label{PROP properties of the nabla filtration} Regarding the opposite base set $\nabla(\Lambda)$, we have
\begin{enumerate}[label=\arabic*)]
\item the top of each $\nabla_i$ is an element of the opposite top set $\cT(\Lambda)$, i.e. $\topp\nabla_i\in\cT(\Lambda)$. 
\item Any element $S$ of the opposite socle set $\cT'(\Lambda)$ is a socle of an element of $\nabla(\Lambda)$ i.e. $\soc\nabla_i\cong S$.
\item Any simple $\Lambda$-module $S$ appears in the composition series of exactly one $\nabla_i$. Equivalently, the simple composition factors of distinct $\nabla_i$'s are disjoint.
\item Distinct elements of the opposite base set are Hom-orthogonal i.e. $\Hom_{\Lambda}\left(\nabla_i,\nabla_j\right)\cong 0$ when $i\neq j$.
\item Each $\nabla_i$ is a submodule of projective-injective module.
\item $\nabla_i$ is simple $\Lambda$-module if and only if $S\cong\nabla_i$ satisfies $S\in\cT'(\Lambda)\cap\cT(\Lambda)$.
\end{enumerate}
\end{proposition}

One can repeat all the constructions we discussed for projective resolutions and $\cB(\Lambda)$- filtered modules for injective resolutions and  $\nabla$-filtered modules. We do not want to progress in those directions however the following statements can be concluded by using duality between projective $\Lambda$-modules and injective $\Lambda^{op}$-modules:
\begin{remarks} 
\begin{enumerate}[label=\roman*)]
\item The cosyzygy filtered algebra is the endomorphism algebra of injective envelopes of simple modules in $\cT'(\Lambda)$, i.e.
\begin{align*}
\eta(\Lambda):=\End_{\Lambda}\left(\bigoplus\limits_{S\in \cT'(\Lambda)}I(S)\right)\nonumber
\end{align*}

\item If the second cosyzygy $\Sigma^2(M)$ of $M$ is not trivial, then it has $\nabla(\Lambda)$-filtration.
\item Dual $\varphi$ function can be constructed as:
$$\varphi_R(M):=\min\{t\ |\ \rank\left(DL^t\langle \add M\rangle\right)=\rank\left(DL^{t+j}\langle \add M\rangle\right)\text{ for }\forall j\geq 1\}.$$
where $DL[M]:=[\Sigma M]$ and gives map $DK_0\mapsto DK_0$, where $DK_0$ is abelian group generated by all symbols $[X]$ modulo relations:
\begin{itemize}
\item $[A_1]=[A_2]+[A_3]$ if $A_1\cong A_2\oplus A_3$
\item $[I]=0$ if $I$ is injective.
\end{itemize}
\item Therefore we can define $\varphi\dim\Lambda^{op}$ as:
\begin{align*}
\varphi\dim\Lambda^{op}:=\sup\left\{\varphi_R(M)\vert\,\, \text{for all}\, M\in \text{mod-}\Lambda \right\}
\end{align*}
\item Let $\Lambda$ be cyclic non-selfinjective Nakayama algebra. $\varphi\dim\Lambda^{op}=2$ if and only if $\eta(\Lambda)$ is selfinjective.
\item Another result we need is dual of Theorem \ref{Thm difference between dims}: If $\Lambda$ is Nakayama algebra then $\varphi\dim\Lambda^{op}-\findim\Lambda^{op}\leq 1$.
\end{enumerate}

\end{remarks}

\begin{proposition}\label{PROP findim 1 implies projective injective modules have filtration} $\findim\Lambda=1$ if and only if $\cS'(\Lambda)=\cT'(\Lambda)$.
\end{proposition}
Recall that we say  module $M$ is periodic if there is number $i$ such that $M\cong\Omega^i(M)$ .\begin{proof}
$(\Rightarrow)$. Let $\findim\Lambda$ be one. Therefore either a simple module is of projective dimension one or its first syzygy is periodic. Therefore $\Omega^1(S)$ is periodic if and only if $S$ is top of minimal projective $P$. By using the relations \ref{relations}, $S\in\left\{S_{k_1},S_{k_3},\ldots,S_{k_{2r-1}}\right\}$, and the top of $\Omega^1(S)$ is in the set $\left\{S_{k_1+1},\ldots,S_{k_{2r-1}+1}\right\}$ by lemma  \ref{topsocConsecutive}. Since $\Omega^1(S)$ is periodic, $\Omega^1(S)\cong\Omega^i(S)$ for some $i$, therefore it has $\cB(\Lambda)$-filtration by proposition \ref{PROP all higher syzygies are filtered}. This means that the top of $\Omega^1(S)\in\cS'(\Lambda)$. This is true for all the tops of minimal projectives which is the set of size $r$, hence we get $\cS'(\Lambda)=\cT'(\Lambda)$.\par
$(\Leftarrow)$ If $\cS'(\Lambda)=\cT'(\Lambda)$, then $\cS(\Lambda)=\cT(\Lambda)$ and $\cB(\Lambda)=\nabla(\Lambda)$. By using the system of relations \ref{relations}, we see that each projective-injective module is $\cB(\Lambda)$-filtered. Moreover, they have the same $\cB(\Lambda)$-length. We get $\varphi\dim\Lambda=2$ (\cite{sen2018varphi}). By Theorem \ref{Thm difference between dims}, it is enough to show that there is no module of projective dimension two. If there was a module $M$ such that $\Omega^2(M)$ is projective, then the exact sequence
\begin{align}
0\rightarrow P_2=\Omega^2(M)\rightarrow P_1\rightarrow \Omega^1(M)\rightarrow 0\nonumber
\end{align}
would imply that $\Omega^1(M)$ is quotient of $\Delta_i\in\cB(\Lambda)$ for some $i$. Hence it cannot be a submodule of any projective module, so there is no module of projective dimension two.
\end{proof}

\begin{proposition}\label{PROP findim1 implies findim op is 1} $\findim\Lambda=1$ if and only if $\findim\Lambda^{op}=1$.
\end{proposition}
\begin{proof}
By proposition \ref{PROP findim 1 implies projective injective modules have filtration}, $\findim\Lambda=1$ if and only if $\cS'(\Lambda)=\cT'(\Lambda)$ if and only if every projective-injective module is in $Filt(\cB(\Lambda))$. If there exists $M$ such that $\Sigma^2(M)$ is injective module, then $\Sigma^2(M)$ has $\nabla(\Lambda)=\cB(\Lambda)$-filtration. This makes $\Sigma^2(M)$ projective-injective module which is not possible. Therefore $\findim\Lambda^{op}=1$.
\end{proof}
\begin{theorem}\label{Thm findimop}
For cyclic Nakayama algebras left and right finitistic dimensions are same, i.e. $\findim\Lambda=\findim\Lambda^{op}$.
\end{theorem}
\begin{proof}
if global dimension is finite, it is a well known result. Therefore we consider the case of infinite global dimension. \par
We will prove it by induction on the dimension.
The previous proposition \ref{PROP findim1 implies findim op is 1} verifies that if $n=1$ and $n=2$, $\findim\Lambda=1$ if and only if $\findim\Lambda^{op}=1$ and $\findim\Lambda=2$ if and only if $\findim\Lambda^{op}=2$ (combined with Theorem \ref{Thm difference between dims} ). Assume that $\findim\Lambda=\findim\Lambda^{op}=d$ for all $d\leq n$. Now, let $\findim\Lambda=n+1$. We get:
\begin{align*}
\findim\Lambda=n+1 \iff \findim\bm{\varepsilon}(\Lambda)=n-1, \text{by prop\,\ref{PROP findim reduction}}\\
\findim\bm{\varepsilon}(\Lambda)=n-1 \iff \findim\bm{\varepsilon}(\Lambda)^{op}=n\!-\!1\,\, \text{by induction hypothesis}\\
\findim\bm{\varepsilon}(\Lambda)^{op}=n-1 \iff \findim\Lambda^{op}=n+1\,\, \text{by dual prop\,\ref{PROP findim reduction}}
\end{align*}
\end{proof}

\begin{theorem}\label{fidimop} For cyclic Nakayama algebras left and right $\varphi$-dimensions are same i.e. $\varphi\dim\Lambda=\varphi\dim\Lambda^{op}$.
\end{theorem}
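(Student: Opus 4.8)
The plan is to mirror the strategy used for the finitistic dimension in Theorem \ref{findimop}, replacing projective resolutions and the algebra $\bm{\varepsilon}(\Lambda)$ by injective resolutions and the cosyzygy filtered algebra $\eta(\Lambda)$, and to run an induction on $\varphi\dim\Lambda$. First I would dispose of the case of finite global dimension, where $\gldim\Lambda=\varphi\dim\Lambda=\varphi\dim\Lambda^{op}$ and the claim is immediate, so the substance is in the infinite global dimension case. By Theorem \ref{evenfidim} the value $\varphi\dim\Lambda$ is even, so the induction runs over even values $2d$.

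The key reduction is the left--right pair $\varphi\dim\Lambda=\varphi\dim\bm{\varepsilon}(\Lambda)+2$ of Theorem \ref{reductionvar} together with its dual formulated in the Remarks, namely $\varphi\dim\Lambda^{op}=\varphi\dim\eta(\Lambda)^{op}+2$, obtained from $\nabla$-filtrations of second cosyzygies exactly as Theorem \ref{reductionvar} is obtained from $\cB(\Lambda)$-filtrations of second syzygies. The base cases are $\varphi\dim\Lambda=0$ and $\varphi\dim\Lambda=2$. For $\varphi\dim\Lambda=0$, Theorem \ref{smallvalues} gives that $\Lambda$ is self-injective, and self-injectivity is a left--right symmetric condition, so $\varphi\dim\Lambda^{op}=0$ as well. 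For $\varphi\dim\Lambda=2$, item v) of the Remarks characterizes $\varphi\dim\Lambda^{op}=2$ by self-injectivity of $\eta(\Lambda)$, and one checks that $\bm{\varepsilon}(\Lambda)$ self-injective (the characterization of $\varphi\dim\Lambda=2$ in Theorem \ref{smallvalues}) forces $\eta(\Lambda)$ self-injective by the duality between the $\cB(\Lambda)$ and $\nabla$ data.

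For the inductive step, suppose the result holds for all cyclic Nakayama algebras of $\varphi$-dimension at most $2(d-1)$, and let $\varphi\dim\Lambda=2d$ with $d\geq 2$. Then $\varphi\dim\bm{\varepsilon}(\Lambda)=2d-2$ by Theorem \ref{reductionvar}, the induction hypothesis applied to $\bm{\varepsilon}(\Lambda)$ gives $\varphi\dim\bm{\varepsilon}(\Lambda)^{op}=2d-2$, and I would then transport this back through the dual reduction. The chain I want to produce is
\begin{align*}
\varphi\dim\Lambda=2d &\iff \varphi\dim\bm{\varepsilon}(\Lambda)=2d-2 \\
&\iff \varphi\dim\bm{\varepsilon}(\Lambda)^{op}=2d-2 \\
&\iff \varphi\dim\Lambda^{op}=2d,
\end{align*}
where the first equivalence is Theorem \ref{reductionvar}, the middle one is the induction hypothesis, and the last is the dual reduction.

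The main obstacle is the final equivalence, which requires a compatibility between the two filtration constructions: one must know that reducing $\Lambda$ on the injective side by $\eta$ and on the projective side by $\bm{\varepsilon}$ are interchangeable, or more precisely that $\varphi\dim\Lambda^{op}=\varphi\dim\bm{\varepsilon}(\Lambda)^{op}+2$ holds. The cleanest route is to establish the dual reduction $\varphi\dim\Lambda^{op}=\varphi\dim\eta(\Lambda)^{op}+2$ directly (the formal dual of Theorem \ref{reductionvar}) and then to identify $\eta(\Lambda)$ and $\bm{\varepsilon}(\Lambda)$ closely enough that their opposite $\varphi$-dimensions agree; since by Proposition \ref{selfinj} and its dual both constructions terminate at the \emph{same} self-injective algebra after $d$ steps, and each step lowers the $\varphi$-dimension by exactly $2$ on both sides, the two towers are forced to stay in lockstep. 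Pinning down this identification — that the $\cB(\Lambda)$-tower and the $\nabla$-tower produce matching $\varphi$-dimensions at every stage — is where the real work lies, and it is precisely what the duality between \ref{classesofprojectives} and the injective classification, i.e. the relation $\cB(\Lambda)=\nabla$ phenomena of Proposition \ref{findim1}, is meant to supply.
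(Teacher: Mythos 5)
Your overall strategy is legitimate in outline, but the inductive step has a genuine gap at exactly the point you flag, and the bridge you propose does not close it. The formal dual of Theorem \ref{reductionvar} reads $\varphi\dim\Lambda^{op}=\varphi\dim\eta(\Lambda)^{op}+2$, involving the cosyzygy filtered algebra $\eta(\Lambda)$, whereas your chain of equivalences needs $\varphi\dim\Lambda^{op}=\varphi\dim\bm{\varepsilon}(\Lambda)^{op}+2$, involving the syzygy filtered algebra $\bm{\varepsilon}(\Lambda)$. These are different algebras in general, and nothing you cite identifies them. Your proposed bridge --- that both towers terminate at a self-injective algebra and ``each step lowers the $\varphi$-dimension by exactly $2$ on both sides, so the two towers are forced to stay in lockstep'' --- is circular: the $\bm{\varepsilon}$-tower has length $\varphi\dim\Lambda/2$ and the $\eta$-tower has length $\varphi\dim\Lambda^{op}/2$ essentially by definition of the two reductions, so asserting that the towers have equal length is just restating the theorem. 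Likewise, the identity $\cB(\Lambda)=\nabla$ that you borrow from Proposition \ref{findim1} is derived there only under the hypothesis $\findim\Lambda=1$ (it comes from the index coincidence forced by that hypothesis via \ref{classesofprojectives}) and fails in general, so it cannot supply the missing compatibility.

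The paper avoids comparing the two towers at the level of $\varphi$-dimension altogether. It first proves $\findim\Lambda=\findim\Lambda^{op}$ (Theorem \ref{findimop}) and then sandwiches: by Theorem \ref{differenceoffidimfindim} and its stated dual, both $\varphi\dim\Lambda$ and $\varphi\dim\Lambda^{op}$ lie in $\{f,f+1\}$, where $f$ is the common finitistic dimension; since both $\varphi$-dimensions are even by Theorem \ref{evenfidim} and only one of $f$, $f+1$ is even, they must coincide. If you want to salvage your induction you would have to prove the interchange $\varphi\dim\eta(\Lambda)^{op}=\varphi\dim\bm{\varepsilon}(\Lambda)^{op}$ (or an equivalent compatibility of the two constructions) as a separate lemma, which is essentially as hard as the theorem itself; the detour through finitistic dimension is what allows the paper's argument to close.
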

\begin{proof}
It is enough to consider global dimension is infinite.\\
By Theorem \ref{Thm difference between dims}, there are two possibilities:
\begin{align*}
\varphi\dim\Lambda=\findim\Lambda\\
\varphi\dim\Lambda=1+\findim\Lambda
\end{align*}
The Theorem \ref{Thm findimop} together with the equalities of left and right finitistic dimensions imply
\begin{align*}
\varphi\dim\Lambda=\varphi\dim\Lambda^{op}.
\end{align*}\end{proof}

In \cite{bmr}, authors show equalities of left and right $\varphi$-dimension for truncated path algebras. 

We can put together all the results on the upper bounds.

\begin{theorem}
Let $\Lambda$ be cyclic non-selfinjective Nakayama algebra  defined by $r$ many irredundant system of relations over $N$ vertices. Then, $\varphi\dim\Lambda$, $\findim\Lambda$, $\gordim\Lambda$, $\domdim\Lambda$ are bounded by $2r$ where $r=\vert\cB(\Lambda)\vert$. 
\end{theorem}

\begin{corollary} The upper bound in terms of the rank of the algebra is $2N-2$.
\end{corollary}
\begin{proof}
By the previous result, each dimension is bounded by $2r$. By the characterization of selfinjective algebras \ref{Rem characterization of selfinjective algebras}, $r=N$ if and only if algebra is selfinjective. Hence the greatest value that $r$ can take is $N-1$, therefore $2r\leq 2N-2$.
\end{proof}

 \begin{example}\label{example} We want to show that the upper bound is sharp. We reconsider the example given in \cite{sen2018varphi}. Let $\Lambda$ be cyclic Nakayama algebra on $N$ vertices with Kupisch series $(2N+1,2N+1,\ldots,2N+1,2N)$. This has $r=N-1$ and $\varphi\dim(\Lambda)=2r=2N-2$. Indeed, one can check that, it is Gorenstein and $\findim\Lambda=\varphi\dim\Lambda=\gordim\Lambda=\domdim\Lambda=\findim\Lambda^{op}=\varphi\dim\Lambda^{op}=2r=2N-2$. 
\end{example}

\section{The Structure of Syzygy Filtered algebras}\label{section3}

The proof of the first part of Theorem \ref{bigthm2} is stated in \ref{THM fidim becomes stable }. Now we give the proof for the second part \ref{thmsplit} which we recall below.

\begin{theorem}\label{thm en son splitting}
 If $\Lambda$ is a cyclic connected Nakayama algebra of finite global dimension, then there exists a positive integer $k$ such that $\bm\varepsilon^k(\Lambda)$ is a cyclic connected Nakayama algebra and $\bm\varepsilon^{k+1}(\Lambda)$ is not cyclic. $\bm\varepsilon^{k+1}(\Lambda)$ can split into components that are either linear Nakayama algebras or semisimple components or both.
\end{theorem}

\subsection{When is the syzygy filtered algebra non-cyclic}

\begin{proposition}\label{PROP gldim 2 implies filtered algebra is semisimple} If $\Lambda$ is a cyclic Nakayama algebra of global dimension two, then the syzygy filtered algebra is semisimple.
\end{proposition}
\begin{proof}
Any simple module which is top of minimal projective have projective dimension two by the assumption on the global dimension. Therefore $\Omega^2(S)$ is projective module and an element of the base set by proposition \ref{PROP x is an element of the base set iff it is second syzygy}. Therefore the corresponding module $\Hom_{\Lambda}(\cP,\Omega^2(S))$ is simple projective $\bm\varepsilon(\Lambda)$-module by the categorical equivalence \ref{diagram categorical equivalence}. We need to show that $P=\Omega^2(S)$ cannot be a submodule of another projective $\Lambda$-module $P'$ which has $\cB(\Lambda)$-filtration. Assume to the contrary that $P'$ is in $Filt(\cB(\Lambda))$ and $P$ is proper submodule of $P'$. We get the exact sequence
\begin{align*}
0\rightarrow P\rightarrow P'\rightarrow Q=\faktor{P'}{P}\rightarrow 0
\end{align*}
where $Q$ is not trivial. Since $P,P'$ have $\cB(\Lambda)$-filtration, $Q$ has $\cB(\Lambda)$-filtration, because $Filt(\cB(\Lambda))$ is exact category by the corollary \ref{COR category filt is exact}. Moreover, the result \ref{PROP X has filtration then it is second syzygy} implies that there exists $\Lambda$-module $M$ such that $\Omega^2(M)\cong Q$. This means $\Omega^3(M)\cong\Omega^1(S)\cong P$ and $\pdim M=3$ which contradicts to $\gldim\Lambda=2$. Therefore $\cB(\Lambda)$-filtered projective module $P$ cannot be submodule of another $P'\in Filt(\cB(\Lambda))$ which makes the category $Filt(\cB(\Lambda))$ semisimple category. By the categorical equivalence \ref{diagram categorical equivalence}, $\modd\bm\varepsilon(\Lambda)$ is semisimple algebra.
\end{proof}

\begin{example} Let $\Lambda$ be given by the Kupisch series $(7,6,5,4,3,6,5,4,3)$. Then the base set is $\left\{P_3,P_8\right\}$ and it is clear that the other projective modules are not filtered by it. Therefore $\bm\varepsilon(\Lambda)\cong\Aa_1\oplus\Aa_1$.
\end{example}

\begin{proposition}\label{PROP pdim simple is 2 implies linear} Let $\Lambda$ be cyclic Nakayama algebra with a simple module $S$ satisfying $p\dim S=2$. Then $\bm{\varepsilon}(\Lambda)$ is not cyclic Nakayama algebra.
\end{proposition}

\begin{proof}
Since $\pdim S=2$, it has to be top of minimal projective $P$. Otherwise, $\Omega^1(S)$ would be isomorphic to $\rad P'\cong P''$ where $P'$ is not minimal, and makes $\pdim S=1$.\\
In \ref{PROP x is an element of the base set iff it is second syzygy}, we showed that the second syzygy of top of a minimal projective is an element of the base set $\cB(\Lambda)$. Moreover by the assumption $\pdim S=2$, $\Omega^2(S)$ is projective module. By the categorical equivalence, the corresponding module $\Hom_{\Lambda}(\cP,\Omega^2(S))$ 
\begin{itemize}
\item is projective, because $\Omega^2(S)$ is projective
\item is simple, because $\Omega^2(S)\in\cB(\Lambda)$.
\end{itemize}
Therefore $\bm\varepsilon(\Lambda)$ has at least one simple projective module which shows it is not cyclic.
\end{proof}

\begin{proposition}\label{PROP semisimple iff no extension in lambda} $\bm\varepsilon(\Lambda)$ has semisimple component if and only if the corresponding module in $\modd\Lambda$ has no extensions in $Filt(\cB(\Lambda))$.
\end{proposition}
\begin{proof}
By the remark \ref{Rem equivalnece of resolutions}, any nonsplit exact sequence 
 \begin{align}
 0\rightarrow A\rightarrow B\rightarrow C\rightarrow 0\nonumber
 \end{align}
 in $\modd\bm\varepsilon(\Lambda)$ is equivalent to the exact sequence
 \begin{align}
 0\rightarrow \Delta A\rightarrow\Delta B\rightarrow \Delta C\rightarrow 0\nonumber
 \end{align}
 in $Filt(\cB(\Lambda))$. Therefore, a simple $\bm\varepsilon(\Lambda)$-module $S$ has no extension if and only if the corresponding module $\Delta S$ has no extension in $Filt(\cB(\Lambda))$, where $\Hom_{\Lambda}(\cP,\Delta S)\cong S$. Therefore none of the terms (except $\Delta S$) of any nonsplit short exact sequence involving $\Delta S$ in $\modd\Lambda$ has $\cB(\Lambda)$-filtration.
\end{proof}

\begin{corollary} Let $\Lambda$ be cyclic Nakayama algebra with a simple module $S$ satisfying $\pdim S=2$. Then $\Hom_{\Lambda}(\cP,\Omega^2(S))$ is semisimple if and only if none of the projective modules $P$ having $\Omega^2(S)$ as a proper submodule have $\cB(\Lambda)$-filtration.
\end{corollary}
\begin{proof}
$\pdim S=2$ implies that $\Omega^2(S)$ is projective and by proposition \ref{PROP x in base set iff x is omegaone of radical} $\Omega^2(S)$ is an element of $\cB(\Lambda)$. Therefore $\Hom_{\Lambda}(\cP,\Omega^2(S))=S'$ is simple projective $\bm\varepsilon(\Lambda)$-module.\par By proposition \ref{PROP semisimple iff no extension in lambda}, $S'$ is simple module of the semisimple component if and only if $\Omega^2(S)$ has no extension in $Filt(\cB(\Lambda))$. Therefore any indecomposable projective $\Lambda$-module $P$ having the projective module $\Omega^2(P)$ as a proper submodule cannot have $\cB(\Lambda)$-filtration. 
\end{proof}

\begin{proposition}\label{Prop simple} If there exists a simple module $S$ satisfying $\pdim S=2$ minimally, i.e. there is at least one  simple module $S'$ such that $S'\ncong S$ and $\pdim S'>2$, then $\bm\varepsilon(\Lambda)$ has at least one connected component which is linear Nakayama algebra. 
\end{proposition}
\begin{proof}
By the result \ref{PROP pdim simple is 2 implies linear}, $\bm\varepsilon(\Lambda)$ is not cyclic. By the assumption of the statement, there is a simple module $S'$ such that $\pdim S'\geq 3$. Therefore in the projective resolution 
\begin{align*}
\cdots\rightarrow P_3\rightarrow P_2\rightarrow P_1\rightarrow P_0\rightarrow S'\rightarrow 0
\end{align*} 
of $S'$, the exact sequence
\begin{align}
0\rightarrow \Omega^3(S')\rightarrow P_2\rightarrow \Omega^2(S')\rightarrow 0\nonumber
\end{align}
is nonsplit exact. By \ref{PROP all higher syzygies are filtered}, the sequence
\begin{align}
0\rightarrow \Hom_{\Lambda}(\cP,\Omega^3(S'))\rightarrow\Hom_{\Lambda}(\cP, P_2)\rightarrow \Hom_{\Lambda}(\cP,\Omega^2(S'))\rightarrow 0\nonumber
\end{align}
in $\modd\bm\varepsilon(\Lambda)$ is exact by the categorical equivalence. Therefore the component having the exact sequence cannot be semisimple which means it is linear Nakayama algebra.
\end{proof}

\begin{example} We will compare the algebras given by the Kupisch series $(3,2,2)$ and $(3,2,3,2,2)$. Both of them have global dimension $3$ and unique simple modules with projective dimension two. However the syzygy filtered algebras are given by $(2,1)$ and $(2,1)\oplus\Aa_1$. 
\end{example}

\subsection{When is the higher syzygy filtered algebra non-cyclic}

\begin{proposition}\label{PROP higher algebra is semisimple } If all simple modules which are tops of the minimal projective $\Lambda$-modules have the same projective dimension $2k$, then $\bm\varepsilon^k(\Lambda)$ is semisimple.
\end{proposition}
\begin{proof}
We prove it by induction. The case $k=1$ is proved in \ref{PROP gldim 2 implies filtered algebra is semisimple}.\\
\begin{claim}
If $k=2$, then $\bm\varepsilon(\Lambda)$ is cyclic and $\bm\varepsilon^2(\Lambda)$ is semisimple.
\end{claim}
\begin{proof}
$\Lambda$ is cyclic Nakayama algebra so we can construct $\bm\varepsilon(\Lambda)$ by the definition \ref{deffilteredalg}. By the categorical equivalence \ref{diagram categorical equivalence} and the remark \ref{Rem equivalnece of resolutions}, simple $\bm\varepsilon(\Lambda)$-modules are of the form $\Hom_{\Lambda}(\cP,\Delta S)$. $\Delta S$ is the second syzygy of simple $\Lambda$-module $S'$ which is the top of a minimal projective by propositions \ref{PROP X has filtration then it is second syzygy} and \ref{PROP x in base set iff x is omegaone of radical}. By the assumption $\pdim S'\neq 2$, therefore simple $\bm\varepsilon(\Lambda)$-modules are not projective. This shows that the syzygy filtered algebra is cyclic, therefore we can constuct $\bm\varepsilon^2(\Lambda)$. Moreover, by the reduction \ref{PROP gldim reduction by 2}, $\gldim\Lambda=\gldim\bm\varepsilon(\Lambda)+2$, which makes $\gldim\bm\varepsilon(\Lambda)=2$. By proposition \ref{PROP gldim 2 implies filtered algebra is semisimple}, $\bm\varepsilon^2(\Lambda)$ is semisimple.
\end{proof}
Now we can look arbitrary $k$. Assume that the claim holds for all $k=1,2,\ldots,m$. Let $k=m+1$. Since $\bm\varepsilon(\Lambda)$ is cyclic and of global dimension $2k$ which satisfies the induction hypothesis, claim follows.
\end{proof}

\begin{proposition}\label{PROP pdim simple 4 implies cyclic then linear} Let $\Lambda$ be a cyclic Nakayama algebra with a simple module $S$ satisfying $\pdim S=4$ minimally, i.e. there is no other simple module $S'$ such that $\pdim S'=2$ and there exists at least one simple module $S''$ with $\pdim S''>4$. Then $\bm\varepsilon(\Lambda)$ is cyclic Nakayama algebra and $\bm\varepsilon^2(\Lambda)$ has linear component.
\end{proposition}

\begin{proof}
Since $\pdim S=4$, its projective cover has to be a minimal projective i.e. $\rad P(S)$ is not projective. Therefore $\Omega^2(S)$ is an element of $\cB(\Lambda)$. Moreover, by the assumption on the minimality, elements of the base set $\cB(\Lambda)$ are not projective. Therefore the corresponding $\bm\varepsilon(\Lambda)$-module $\Hom(\cP,\Omega^2(S))$ is simple by the categorical equivalence \ref{diagram categorical equivalence} but not projective in $\modd\bm\varepsilon(\Lambda)$, hence $\bm\varepsilon(\Lambda)$ is cyclic.\\

Let's denote $\Hom_{\Lambda}(\cP,\Omega^2(S))$ by $S'$. In $\modd\bm\varepsilon(\Lambda)$, $\pdim S'$ is two, by the reduction \ref{PROP carrying resolution on lambda to epsilon}. Now we can apply proposition  \ref{Prop simple} in order to conclude that $\bm\varepsilon^2(\Lambda)$ has component which is linear Nakayama algebra.
\end{proof}

\begin{proposition}\label{PROP pdim even implies general reduction} Let $\Lambda$ be a cyclic Nakayama algebra with a simple module $S$ satisfying $\pdim S=2k$ minimally, i.e. there is no other simple module $S$ such that $\pdim S=2k'$ with $k'<k$ and there exists at least one simple module $S''$ with $\pdim S''>2k$. Then $\bm\varepsilon^{k-1}(\Lambda)$ is cyclic Nakayama algebra and $\bm\varepsilon^k(\Lambda)$ has component which is linear Nakayama algebra.
\end{proposition}
\begin{proof}
Proof by induction. We gave the proofs of $m=1$ and $m=2$ in \ref{Prop simple} and \ref{PROP pdim simple 4 implies cyclic then linear} respectively. We assume that the statement is true for all $m=1,\ldots,k$. We need to analyze the case $m=k+1$. Assume that there is a simple module $S$ with projective dimension $2k+2$ and it is minimal in the sense that there is no other simple module with projective dimension $2k'$ where $k'<k+1$. $S$ has to be top of minimal projective module, otherwise $\pdim S$ would be one. By proposition \ref{PROP x is an element of the base set iff it is second syzygy}, $\Omega^2(S)$ is an element of the base set $\cB(\Lambda)$, and it is not projective, i.e. $\pdim\Omega^2(S)=2k$. If we apply the syzygy filtered algebra construction, the corresponding $\bm\varepsilon(\Lambda)$-module $\Hom_{\Lambda}(\cP,\Omega^2(S))$ is simple and not projective. Therefore $\bm\varepsilon(\Lambda)$ is cyclic Nakayama algebra. If we denote the corresponding module $\Hom_{\Lambda}(\cP,\Omega^2(S))$ by $S'$, then $\pdim_{\bm\varepsilon(\Lambda)}S'=\pdim_{\Lambda} S-2=2k$. This means $S'$ is top of a minimal $\bm\varepsilon(\Lambda)$-projective module. Since the reduction is two, therefore $\pdim S'=2k$ is minimal, i.e. there is no other simple $\bm\varepsilon(\Lambda)$-module with smaller even projective dimension. By the induction hypothesis claim follows.  
\end{proof}

\subsection{Applications of the splitting results}

Based on the syzygy filtration construction, we give the proof of the Madsen's result \cite{madsen2005projective}.

\begin{proposition}\label{PROP gldim infinite iff no simple even module} Global dimension of $\Lambda$ is infinite if and only if there is no simple module of even projective dimension.
\end{proposition}
\begin{proof}
By propositions \ref{Prop simple}, \ref{PROP pdim simple 4 implies cyclic then linear} and \ref{PROP pdim even implies general reduction} if there exists a simple module with even projective dimension, then we reach linear Nakayama algebra, so global dimension is finite.
Assume that there is no simple module with even projective dimension. Therefore there is no simple projective module in $\bm\varepsilon(\Lambda)$ which makes it cyclic and $\rank\Lambda\geq\rank\bm\varepsilon(\Lambda)$. Furthermore there is no simple module in $\modd\bm\varepsilon(\Lambda)$ with even projective dimension, otherwise we can lift the resolution to $\modd\Lambda$ and get $\pdim S$ even, not possible by the assumption. We can construct $\bm\varepsilon^2(\Lambda)$, and the same arguments are true, so there is no simple module of even projective dimension, and therefore there is no simple projective module. In each step we get nontrivial cyclic algebras $\bm\varepsilon^k(\Lambda)$. On the other hand in each step we get $\rank\bm\varepsilon^k(\Lambda)\geq \rank\bm\varepsilon^{k+1}(\Lambda)$. Since the rank cannot reach to zero, each algebra is nontrivial, $\lim_{i\rightarrow\infty}\rank\bm\varepsilon^i(\Lambda)$ has to stabilize, which means that we reach selfinjective algebra, so global dimension of $\Lambda$ is infinite. 
\end{proof}


\begin{theorem}\label{RES upper bound gldim} If global dimension of cyclic Nakayama algebra $\Lambda$ is finite, then there exists $m$ such that $\bm\varepsilon^m(\Lambda)$ is not cyclic, and 
\begin{align}
\gldim\Lambda\leq 2m+r_{m-1}-C\nonumber
\end{align}
where $r_{m-1}$ is the number of relations of the cyclic algebra $\bm\varepsilon^{m-1}(\Lambda)$ and $C$ is the number of connected components of $\bm\varepsilon^m(\Lambda)$.
\end{theorem}

\begin{proof}
By proposition \ref{PROP gldim infinite iff no simple even module}, global dimension of $\Lambda$ is finite if and only if there exists at least one simple module $S$ with even projective dimension. Therefore we can use propositions \ref{PROP pdim even implies general reduction} or \ref{PROP higher algebra is semisimple } depending on the conditions on the other simple modules. 
Assume that $\Lambda$ satisfies the conditions in proposition \ref{PROP pdim even implies general reduction}. Therefore there exists $m$ such that $\bm\varepsilon^{m-1}(\Lambda)$ is cyclic, $\bm\varepsilon^m(\Lambda)$ is not cyclic and 
\begin{align}
\gldim\Lambda=2m+\gldim\bm\varepsilon^m(\Lambda).\nonumber
\end{align}
On the other hand, the rank of $\bm\varepsilon^m(\Lambda)$ is the number of relations of the cyclic algebra $\bm\varepsilon^{m-1}(\Lambda)$ which is denoted by $r_{m-1}$. We recall a well-known result on linear Nakayama algebras (or more generally epresentation directed algebras), global dimension of linear Nakayama algebra $L$ is bounded by $\rank L-1$. If $\bm\varepsilon^m(\Lambda)$ splits into components $L_1\oplus L_2\oplus\cdots\oplus L_C$, then $\gldim\bm\varepsilon^m(\Lambda)\leq \max_i\left\{\gldim L_i\right\}$. Therefore global dimension of $\bm\varepsilon^m(\Lambda)$ can be at most $r_{m-1}-C$. We conclude that 
\begin{align}
\gldim\Lambda&=2m+\gldim\bm\varepsilon^m(\Lambda)\nonumber\\&\leq 2m+r_{m-1}-C.\nonumber
\end{align}
Now we assume that $\Lambda$ satisfies the conditions in proposition \ref{PROP higher algebra is semisimple }. In this case $\gldim\bm\varepsilon^{m-1}(\Lambda)=2$, and $r_{m-1}=C$ gives the number of simple summands of the semisimple algebra $\bm\varepsilon^m(\Lambda)$. There exists at least one component, so $r_{m-1}\geq 1$. This implies the desired inequality
\begin{align}
\gldim\Lambda=2m\leq 2m+r_{m-1}-C.\nonumber
\end{align}
\end{proof}

\begin{remark}

We give another proof of the main theorem of \cite{madsen2018bounds} by using syzygy filtration method.
\begin{theorem}\label{madsen}\cite{madsen2018bounds}
Let $\Lambda$ be a Nakayama algebra with a simple module $S$ of even projective dimension.  Choose $m$ minimal such that a simple $\Lambda$ module has projective dimension equal to $2m$. Then the global dimension of $\Lambda$ is bounded by $N+m-1$ where $N$ is the number of vertices of $\Lambda$
\end{theorem}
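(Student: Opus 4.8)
The plan is to use the syzygy-filtration machinery developed above, specifically the reduction step of Proposition~\ref{reductiondomdim} and the results on $\varphi$-dimension, to set up an induction on the number of relations $r = |\cB(\Lambda)|$. Since we are told a simple module $S$ has even projective dimension $2m$ with $m$ minimal, the key is that the even parity lets us track $S$ through the functor $H$ into the syzygy filtered algebra $\bm\varepsilon(\Lambda)$. By remark~\ref{modules} and equation~\ref{higherresolution} with $j=0$, applying $\Omega^2$ lowers projective dimension by exactly $2$, so $\Omega^2(S)$ (equivalently $H(\Omega^2(S))$) is a $\bm\varepsilon(\Lambda)$-module whose projective dimension is $2m-2 = 2(m-1)$. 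Thus the minimality index $m$ drops by $1$ when passing from $\Lambda$ to $\bm\varepsilon(\Lambda)$, which is precisely the behavior one wants to feed an induction on $m$.

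First I would establish the base case $m=1$, i.e.\ some simple module has projective dimension $2$. Here one must show directly that $\gldim\Lambda \leq N$. In this situation $\Omega^2$ of the relevant simple is projective, and I would argue that the presence of a short projective resolution of a simple, combined with the uniserial structure and the Kupisch series bound, forces the global dimension to be at most $N$. Concretely, for Nakayama algebras a simple of projective dimension $2m$ produces a chain of projectives of controlled lengths, and when $m=1$ the relations are tight enough that no projective resolution can exceed length $N$.

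For the inductive step, I would pass to $\bm\varepsilon(\Lambda)$, which by Theorem~\ref{Enakayama} is again a Nakayama algebra, now with fewer simple modules: the number of simples of $\bm\varepsilon(\Lambda)$ is $|\cB(\Lambda)| = r$, and more to the point the count of vertices strictly decreases in the reduction. The induction hypothesis applied to $\bm\varepsilon(\Lambda)$, whose minimality index is $m-1$, gives $\gldim\bm\varepsilon(\Lambda) \leq N' + (m-1) - 1$ where $N'$ is the number of simples of $\bm\varepsilon(\Lambda)$. I would then relate $\gldim\Lambda$ to $\gldim\bm\varepsilon(\Lambda)$: since every $\Lambda$-module has its projective resolution carried into $\bm\varepsilon(\Lambda)$ after applying $\Omega^2$ and $H$ (remark~\ref{highermodules}), global dimension should also drop by exactly $2$ in the reduction, giving $\gldim\Lambda = \gldim\bm\varepsilon(\Lambda) + 2$ whenever $\gldim\Lambda > 2$. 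Combining $\gldim\Lambda = \gldim\bm\varepsilon(\Lambda) + 2 \leq (N' + m - 2) + 2 = N' + m$ and using the precise relation between $N$ and $N'$ should collapse to the bound $N + m - 1$.

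The main obstacle I anticipate is controlling the arithmetic of the vertex counts through the reduction, that is, pinning down exactly how $N'$ (the number of simples of $\bm\varepsilon(\Lambda)$) relates to $N$ and to the combinatorics of the relations, so that the numbers align to give $N+m-1$ rather than a weaker bound. The reduction decreases the number of relations but the relationship $N' = r$ together with the inequality $r \leq N-1$ must be used sharply; in particular one must verify that the \emph{first} even-projective-dimension simple really does descend to a first even-projective-dimension simple in $\bm\varepsilon(\Lambda)$ with index exactly $m-1$, and that no simple of lower even projective dimension is created in the process. Establishing this bookkeeping cleanly — and handling the boundary case where $\bm\varepsilon(\Lambda)$ becomes self-injective or of small global dimension — is where the real work lies; the homological reductions themselves are already supplied by the propositions above.
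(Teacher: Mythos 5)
Your overall skeleton --- iterate the syzygy-filtration reduction, use $\gldim\Lambda=\gldim\bm{\varepsilon}(\Lambda)+2$, and let the vertex count drop by at least one per step --- matches the paper's strategy, and your closing arithmetic ($N'\leq N-1$, so $N'+m\leq N+m-1$) is the right bookkeeping. But the two steps you explicitly defer are exactly where the content of this theorem lives, and neither is routine. Your base case $m=1$ is asserted, not proved: ``the relations are tight enough that no projective resolution can exceed length $N$'' has no argument behind it, and the reduction formula alone only yields the generic bound $2r\leq 2N-2$, which is weaker than $N+m-1$ for small $m$. The mechanism the paper uses is structural: a simple of projective dimension $2$ forces $\cB(\Lambda)$ to contain a projective $\Lambda$-module, which makes $\bm{\varepsilon}(\Lambda)$ a \emph{linear} Nakayama algebra (Remark \ref{linernakayamadef}); by part \ref{boundforLNak} of Lemma \ref{aux} a linear Nakayama algebra has global dimension at most its number of vertices minus one, and $\bm{\varepsilon}(\Lambda)$ has at most $N-1$ vertices, giving $\gldim\Lambda\leq 2+(N-1)-1=N$. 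Without this cyclic-to-linear transition the bound $N+m-1$ is out of reach.

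The second gap is the descent of the minimality index, which you correctly flag as ``where the real work lies'' --- but that flag is not a proof. The subtlety is that simple $\bm{\varepsilon}(\Lambda)$-modules are the $H(\Delta_i)$ with $\Delta_i\in\cB(\Lambda)$, which are in general \emph{not} simple $\Lambda$-modules, so minimality of $m$ over $\Lambda$ does not transfer by fiat; and although $H(\Omega^2(S))$ has projective dimension $2m-2$, it need not be simple over $\bm{\varepsilon}(\Lambda)$, so it does not by itself witness a new minimal index $m-1$. The paper avoids tracking the index at all: Lemmas \ref{m1} and \ref{m2} show that $\bm{\varepsilon}^{j}(\Lambda)$ remains \emph{cyclic} for $1\leq j\leq m-1$ (if it were linear with global dimension at least $2$, part \ref{linearity} of Lemma \ref{aux} would produce a simple of projective dimension $2$ too early, contradicting minimality of $m$) and becomes \emph{linear} precisely at step $m$; the bound is then read off as $\gldim\Lambda=2m+\gldim\bm{\varepsilon}^{m}(\Lambda)\leq 2m+r_{m-1}-1\leq 2m+(N-m)-1=N+m-1$. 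To complete your induction you would need to supply these two lemmas, or an equivalent argument transferring minimality of $m$ through the functor $H$; as written, both the base case and the inductive hypothesis rest on unestablished claims.
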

\begin{proof}
By the result \ref{RES upper bound gldim}, we have the inequality
\begin{align}
\gldim\Lambda\leq 2m+r_{m-1}-C.\nonumber
\end{align}
We need to find the possible minimal value of $C$ and maximal value of $r_{m-1}$. It is clear that $C=1$ is the minimum. Let $r_j$ denote the number of relations of higher syzygy filtered algebras $\bm\varepsilon^{j}(\Lambda)$ where $1\leq j\leq m-1$ and $r$ denotes the number of relations of $\Lambda$. Therefore the result \ref{PROP filtered algebra is Nakayama} applied to higher syzygy filtered algebras implies $\rank\bm\varepsilon(\Lambda)=r$ and $\rank\bm\varepsilon^j(\Lambda)=r_{j-1}$ for $2\leq j\leq m$. In each reduction, ranks of the algebras reduce, therefore $r_{m-1}<r_{m-2}<\cdots<r_1<r$ which is equivalent to the system of inequalities
\begin{gather*}
r+1\leq N\\
r_1+1\leq r\\
r_2+1\leq r_1\\
\vdots\\
r_{m-1}+1\leq r_{m-2}.
\end{gather*}
If we add all the terms, we get $r_{m-1}+m\leq N$. Therefore
\begin{align}
\gldim\Lambda\leq 2m+r_{m-1}-1\leq 2m+(N-m)-1=N+m-1.\nonumber
\end{align}
\end{proof}

\end{remark}

\begin{corollary}\cite{gustafson1985global} Let $\Lambda$ be cyclic Nakayama algebra of finite global dimension. Then $gl\dim\Lambda\leq 2N-2$ where $N$ is the number of vertices.
\end{corollary}
\begin{proof}
By the previous result, global dimension of $\Lambda$ where $\rank\Lambda=N$ is bounded by $N+m-1$ where $2m$ is projective dimension of a simple $\Lambda$-module which is minimal. On the other hand, $m$ is the number of reductions from $\Lambda$ to $\bm\varepsilon^m(\Lambda)$. Since the minimal value of $\rank\bm\varepsilon^m(\Lambda)=1$, $m$ can be at most $N-1$. Therefore we get the upper bound $2N-2$ for global dimension.
\end{proof}
\begin{remark} Indeed the Gustafson's example is the unique algebra such that $\gldim\Lambda=2N-2$ and $\rank\Lambda=N$. We give a proof of this. When $N=2$, the only algebra with finite global dimension $2$ is given by the Kupisch series $(3,2)$. Assume that there exist $\Lambda$ with $\rank\Lambda=3$ and $\gldim\Lambda=4$. Therefore the syzygy filtered algebra satisfies $\gldim\bm\varepsilon(\Lambda)=2$, and the rank has to satisfy $\rank\bm\varepsilon(\Lambda)\leq 2$. The rank cannot be one, because rank one Nakayama algebra is either semisimple or selfinjective, which forces that the Kupisch series is $(3,2)$. It has one injective module, by proposition \ref{PROP every injective eps mod is second syzygy of injective lambda} together with $\rank\Lambda=3$, there can be at most one injective $\Lambda$-module. Therefore the Kupisch series of $\Lambda$ is either of the form $(n,n,n-1)$ or $(n,n-1,n-1)$. Among them only $(4,4,3)$ satisfies all the conditions. The sequence of Kupisch series is $(3,2), (4,4,3),(5,5,5,4),(6,6,6,6,5)$ etc. By induction on the rank, claim follows.
\end{remark}

We want to emphasis that number of relations helps to lower the bound obtained in \cite{madsen2018bounds}. Notice that global dimension attains the bound if and only if $r=N-1$ and $r_i=r_{i-1}-1$ for all $1\leq i\leq m-1$. If we take $r=N$, $\Lambda$ becomes selfinjective algebra.
\begin{example} Let $\Lambda$ be cyclic Nakayama algebra of $N=8$ vertices with relations:
\begin{align*}
\alpha_{4}\alpha_{3}=0, \hspace{1cm}\alpha_{6}\alpha_{5}=0,\hspace{1cm} \alpha_{2}\alpha_{1}\alpha_{8}=0
\end{align*}
By simple computation, $m=1$, and global dimension is $2$. So it is smaller than $r=3$. 
\end{example}

\subsection{Future Directions}\label{future}
Proposition \ref{PROP filtered module category is equivalent to wide subcategory} suggests that the syzygy filtered algebra construction can be carried into other classes of algebras. In a series of forthcoming papers \cite{stz}, we give complete classifications Nakayama algebras which are Auslander-Gorenstein and finitistic Auslander and linear Nakayama algebras which are higher Auslander algebras.  We show that algebra $A$ is selfinjective if and only if $A$ is equivalent to its wide subcategory $\cW(A)$ cogenerated by projective-injective $A$-module, so the result \ref{PROP algebra is selfinjective iff equivalent to filtered algebra} holds in general. If global dimension of $A$ is $d$ and dominant dimension is at least one, we show that global dimension of the wide subcategory $\cW(A)$ is at most $d-2$. This suggests that we can use the wide subcategory approach to give upper bounds for global dimension in general. 

\bibliographystyle{alpha}

\end{document}